\newtheorem{thm}{Theorem}[section]
\newtheorem{lem}[thm]{Lemma}
\newtheorem{prop}[thm]{Proposition}
\newtheorem{rem}[thm]{Remark}
\theoremstyle{definition}
\numberwithin{equation}{section}
\renewcommand{\Re}{\hbox{Re}\,}
\renewcommand{\Im}{\hbox{Im}\,}
\newcommand{\C}{\mathbb{C}}
\renewcommand{\div}{\operatorname{div}}
\newcommand{\R}{\mathbb{R}}
\newcommand{\supp}{\operatorname{supp}}
\newcommand{\Z}{\mathbb{Z}}
\def\hat{\widehat}
\def\tilde{\widetilde}
\def \bfo {\begin {eqnarray*} }
\def \efo {\end {eqnarray*} }
\def \ba {\begin {eqnarray*} }
\def \ea {\end {eqnarray*} }
\def \beq {\begin {eqnarray}}
\def \eeq {\end {eqnarray}}
\def \supp {\hbox{supp }}
\def \p {\partial}
\def\hat{\widehat}
\def\tilde{\widetilde}
\def \bfo {\begin {eqnarray*} }
\def \efo {\end {eqnarray*} }
\def \ba {\begin {eqnarray*} }
\def \ea {\end {eqnarray*} }
\def \beq {\begin {eqnarray}}
\def \eeq {\end {eqnarray}}
\def \supp {\hbox{supp }}
\def \p {\partial}
\begin{document}

 \title[Calder\'on problem with partial data]{The Calder\'on problem with partial data for conductivities with $3/2$ derivatives}

\author[Krupchyk]{Katya Krupchyk}

\address
        {K. Krupchyk, Department of Mathematics\\
University of California, Irvine\\ 
CA 92697-3875, USA }

\email{katya.krupchyk@uci.edu}

\author[Uhlmann]{Gunther Uhlmann}

\address
       {G. Uhlmann, Department of Mathematics\\
       University of Washington\\
       Seattle, WA  98195-4350\\
       USA\\
       Department of Mathematics and Statistics\\ 
       University of Helsinki\\ 
       Finland\\
        and Institute for Advanced Study of the Hong Kong University of Science and Technology}
\email{gunther@math.washington.edu}

\maketitle

\begin{abstract}

We extend a global uniqueness result for the Calder\'on problem with partial data, due to Kenig--Sj\"ostrand--Uhlmann \cite{KenSjUhl2007},  to the case of less regular conductivities. Specifically, we show that in dimensions $n\ge 3$, the knowledge of the Diricihlet--to--Neumann map, measured on possibly very small subsets of the boundary, determines uniquely a conductivity having essentially $3/2$ derivatives in an $L^2$ sense.  

\end{abstract}

\section{Introduction}

Let $\Omega\subset\R^n$, $n\ge 3$, be a bounded open set with $C^2$ boundary,  and let $\gamma\in W^{1,\infty}(\Omega)$ be a real-valued function such that $\gamma>0$ on $\overline{\Omega}$, representing the conductivity of the domain $\Omega$.  Given a voltage potential $f\in H^{\frac{1}{2}}(\p \Omega)$ on the boundary of $\Omega$, the conductivity equation for the electric potential $u\in H^{1}(\Omega)$ in $\Omega$, under the assumption of no sources or sinks of currents, is given by
\begin{equation}
\label{eq_int_conductivity}
\begin{aligned}
L_\gamma u&=\hbox{div} (\gamma \nabla u)=0 \quad \text{in}\quad \Omega,\\  
u|_{\partial \Omega}&=f. 
\end{aligned}
\end{equation}
Associated to the problem \eqref{eq_int_conductivity} is the Dirichlet--to--Neumann map 
\[
\Lambda_\gamma: H^{\frac{1}{2}}(\p \Omega)\to H^{-\frac{1}{2}}(\p \Omega), \quad 
\Lambda_\gamma (f)= \gamma {\partial_\nu u }|_{\partial\Omega},
\]
where $\nu$ is the unit outer normal to the boundary of $\Omega$. The  Dirichlet--to--Neumann map $\Lambda_\gamma$ encodes the voltage to current measurements performed along the boundary of $\Omega$.  

The inverse conductivity problem, posed by Calder\'on  in \cite{Calderon}, studies the question whether the Dirichlet--to--Neumann map $\Lambda_\gamma$, given on the boundary of $\Omega$,  determines the conductivity $\gamma$ inside of   $\Omega$.    This problem is of significance in geophysical prospection, and it  has more recently been proposed as a possible diagnostic tool in medical imaging.  We refer to \cite{Uhlmann_seeing} for a recent comprehensive survey of the work on this problem.

In dimensions $n\ge 3$,  the first global uniqueness result for the inverse conductivity problem was established in \cite{Kohn_Vogelius_1984} for real-analytic conductivities. This was followed by \cite{Syl_Uhl_1987}, proving that  if the conductivities $0 <\gamma_1,\gamma_2\in C^2(\overline{\Omega})$  are such that $\Lambda_{\gamma_1}=\Lambda_{\gamma_2}$, then $\gamma_1=\gamma_2$ in $\Omega$. Subsequently, the regularity of the conductivity was relaxed to $\frac{3}{2}+\delta$ derivatives, $\delta>0$, on the scale of H\"older spaces, in \cite{Brown_1996}. The global uniqueness was further obtained for $W^{\frac{3}{2},\infty}$ conductivities in \cite{PPU_2003} and for conductivities in $W^{\frac{3}{2},p}$, with $p>2n$, in \cite{Brown_Torres_2003}. The recent breakthrough paper \cite{Hab_Tataru} established the global uniqueness for $C^1$ conductivities and Lipschitz continuous conductivities close to the identity.  The latter smallness condition was removed in \cite{Caro_Rogers}, thereby proving a long standing conjecture in the field. The global uniqueness for bounded conductivities in $W^{1,n}$, with $n=3,4$ was obtained in \cite{Haberman}.

Much less is known if the Dirichlet--to--Neumann map $\Lambda_\gamma$ is measured only on a portion of the boundary. The first result in this direction is due to \cite{Bukhgeim_Uhlmann_2002}, proving that if we measure the Dirichlet--to--Neumann map restricted to, roughly speaking, slightly more than half of the boundary, then we can determine a $C^2(\overline{\Omega})$ conductivity in $\Omega$ uniquely. The main technical tool in \cite{Bukhgeim_Uhlmann_2002} is boundary Carleman estimates with linear weights. The result of \cite{Bukhgeim_Uhlmann_2002} has been improved significantly in \cite{KenSjUhl2007}, still for $C^2(\overline{\Omega})$ conductivities, by showing that measuring the Dirichlet--to--Neumann map on a possibly very small open subset of the boundary, with the precise shape depending on the geometry of the domain,  we can determine the conductivity uniquely. Here rather than working with linear weights, a broader class of limiting Carleman weights was introduced and employed. 

Another approach to the partial data inverse problems is due to \cite{Isakov_2007}, and it is based on reflection arguments. In this approach, the subset of the boundary, where the measurements are performed is such that the inaccessible part of the boundary is a subset of a hyperplane or a sphere.  The article \cite{Kenig_Salo_2013} unifies and extends  the approaches of \cite{Bukhgeim_Uhlmann_2002},  \cite{KenSjUhl2007},  and \cite{Isakov_2007}. The linearized  Calder\'on  problem with partial data is studied in \cite{DKSU_2009} and \cite{Sjostrand_Uhlmann_analytic}. We refer to \cite{Kenig_Salo_servey} for a survey on the Calder\'on problem with partial data. 

Of great significance is the issue of reducing the regularity of the conductivity in the Calder\'on problem with partial data. In this direction, the result of \cite{Bukhgeim_Uhlmann_2002} was extended to conductivities of class $W^{\frac{3}{2}+\delta, 2n}(\Omega)$, $\delta>0$, in  \cite{Knudsen_2006}, and to  conductivities of class $C^1(\overline{\Omega})\cap H^{\frac{3}{2}}(\Omega)$ in  \cite{Zhang_Guo_2012}.  The recent paper \cite{Rodriguez} extended the partial data result of \cite{Bukhgeim_Uhlmann_2002} to the more general geometric setting by considering the Calder\'on problem on an admissible Riemannian manifold, assuming that the conductivity is of class $W^{\frac{3}{2}+\delta, 2n}$, $\delta>0$, as in \cite{Knudsen_2006}.  We refer to \cite{DKSaloU_2009} and \cite{Kenig_Salo_2013} for the study of the  Calder\'on problem in this geometric setting. 

Using a link between partial data results of type \cite{Bukhgeim_Uhlmann_2002} on an admissible Riemannian manifold and partial data results of type  \cite{KenSjUhl2007} on $\R^n$, the paper \cite{Rodriguez} relaxes the regularity of the conductivity in the partial data result of \cite{KenSjUhl2007} to $W^{\frac{3}{2}+\delta, 2n}(\Omega)$, $\delta>0$. Let us mention that the proof in \cite{Rodriguez} relies on boundary Carleman estimates with linear weights on admissible manifolds and the invertibility of the attenuated ray transform on simple manifolds.   

 In the present article, we shall further relax the regularity assumptions on the conductivity in   the partial data result of \cite{KenSjUhl2007}. Specifically, we are able to treat conductivities of class $C^{1,\delta}(\overline{\Omega})\cap H^{\frac{3}{2}}(\Omega)$  and conductivities in $W^{1,\infty}(\Omega)\cap H^{\frac{3}{2}+\delta}(\Omega)$. Here $0<\delta<1/2$ is arbitrarily  small but fixed.  When doing so, unlike \cite{Rodriguez}, we work with the conductivity equation directly in the Euclidean setting, and thus, following \cite{KenSjUhl2007}, we consider general limiting Carleman weights and establish boundary Carleman estimates in this context.

Let us now proceed to describe the precise assumptions and results.  First recall the definition of some standard function spaces needed in this paper. The Sobolev space $W^{s,p}(\R^n)$, with $s\in \R$ and $1< p<\infty$, is defined as follows, 
\[
W^{s,p}(\R^n)=\{u\in\mathcal{S}'(\R^n): \mathcal{F}^{-1}((1+|\xi|^2)^{s/2}\hat u)\in L^p(\R^n)\},
\]
where $\hat u$ is the Fourier transform of $u$, and $\mathcal{F}^{-1}$ is the inverse Fourier transform. For $s\ge 0$, we define the space $W^{s,p}(\Omega)$ as the image of the space $W^{s,p}(\R^n)$ under the map $u\mapsto u|_{\Omega}$.  When $p=2$, we shall write $H^s(\R^n)=W^{s,2}(\R^n)$ and $H^s(\Omega)=W^{s,2}(\Omega)$. Let  $C^{0,\delta}(\overline{\Omega})$, $0<\delta\le 1$, be the space of H\"older continuous functions on $\overline{\Omega}$, and let 
\[
C^{1,\delta}(\overline{\Omega})=\{u\in C^1(\overline{\Omega}): \nabla u\in C^{0,\delta}(\overline{\Omega})\}.
\]
Finally, recall the space 
\[
W^{1,\infty}(\Omega)=\{u\in L^\infty(\Omega): \nabla u\in L^\infty(\Omega)\}, 
\]
which can be identified with the space $C^{0,1}(\overline{\Omega})$ of Lipschitz continuous functions on $\overline{\Omega}$.

Let $x_0\in \R^n\setminus\overline{\text{ch}(\Omega)}$, where $\text{ch}(\Omega)$ is the convex hull of $\Omega$. Following \cite{KenSjUhl2007}, we define the front face of $\p \Omega$ with respect to $x_0$ by 
\begin{equation}
\label{eq_int_front}
F(x_0)=\{x\in \p \Omega: (x-x_0)\cdot \nu(x)\le 0\},
\end{equation}
and let $\tilde F$ be an open neighborhood of $F(x_0)$ in $\p \Omega$.

The main result of this paper is as follows. 
\begin{thm}
\label{thm_main}
Let $\Omega\subset\R^n$, $n\ge 3$, be a bounded open set with $C^2$ boundary, and let $\gamma_1,\gamma_2$ be such that either 
\begin{itemize}

\item[(i)] $\gamma_1,\gamma_2\in C^{1,\delta}(\overline{\Omega})\cap H^{\frac{3}{2}}(\Omega)$,

\end{itemize}
or 
\begin{itemize}

\item[(ii)]$\gamma_1,\gamma_2 \in W^{1,\infty}(\Omega)\cap H^{\frac{3}{2}+\delta}(\Omega)$, 
\end{itemize}
where $0<\delta<1/2$  is arbitrarily small but fixed.

Assume that $\gamma_1,\gamma_2>0$ in $\overline{\Omega}$,  $\gamma_1=\gamma_2$ on $\p \Omega\setminus \tilde F$, and that in the case \emph{(i)}  
$\p_\nu\gamma_1=\p_\nu\gamma_2$ on $\p \Omega\setminus \tilde F$, and in the case \emph{(ii)} $\p_\nu\gamma_1=\p_\nu\gamma_2$ in $H^{\delta}(\p \Omega)$. Assume furthermore that 
\[
\Lambda_{\gamma_1} f|_{\tilde F}=\Lambda_{\gamma_2} f_{\tilde F}\quad \text{for all}\quad f\in H^{\frac{1}{2}}(\p \Omega). 
\]
Then $\gamma_1=\gamma_2$ in $\Omega$. 
 
\end{thm}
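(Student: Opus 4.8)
**

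The plan is to follow the standard reduction of the conductivity problem to a Schr\"odinger problem via the substitution $v = \gamma^{1/2} u$, which turns $L_\gamma u = 0$ into $(-\Delta + q)v = 0$ with $q = \gamma^{-1/2}\Delta \gamma^{1/2}$. The regularity hypotheses are tuned precisely so that this potential, while not a function, can be given sense: for $\gamma \in H^{3/2+\delta}$ with Lipschitz gradient (case (ii)) or $\gamma \in C^{1,\delta} \cap H^{3/2}$ (case (i)), one has $\nabla \log \gamma^{1/2} \in H^{1/2+\e}$ for a suitable small $\e>0$, so $q$ lies in $H^{-1/2+\e}$ or can be handled as a first-order perturbation $q = -\Delta \log\gamma^{1/2}/2 - |\nabla\log\gamma^{1/2}|^2/4$ after integration by parts. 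The boundary hypotheses ($\gamma_1=\gamma_2$ and $\p_\nu\gamma_1=\p_\nu\gamma_2$ on $\p\Omega\setminus\tilde F$) are exactly what is needed so that, after extending both conductivities to agree near $\p\Omega$, the potentials $q_1,q_2$ agree near the inaccessible part of the boundary and the Cauchy data of the Schr\"odinger equation can be compared on $\tilde F$; in particular the boundary determination results (which recover $\gamma$ and $\p_\nu\gamma$ on $\tilde F$ from $\Lambda_\gamma|_{\tilde F}$, cf.\ the argument going back to Sylvester--Uhlmann and Brown) let us assume WLOG that $\gamma_1=\gamma_2$, $\p_\nu\gamma_1=\p_\nu\gamma_2$ on \emph{all} of $\p\Omega$.

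Next I would construct complex geometric optics (CGO) solutions adapted to a limiting Carleman weight $\varphi$, following Kenig--Sj\"ostrand--Uhlmann. The point is to work with the conjugated operator $e^{\varphi/h}(-\Delta+q)e^{-\varphi/h}$ and prove a Carleman estimate \emph{with boundary terms}: for $u$ vanishing on the part of the boundary where $\p_\nu\varphi \le 0$, one gets
\begin{equation*}
\frac{h}{C}\|\p_\nu\varphi|^{1/2}e^{\varphi/h}\p_\nu u\|_{L^2(\p\Omega_-)}^2 + \frac{1}{Ch}\|e^{\varphi/h}u\|^2 \le \|e^{\varphi/h}(-\Delta+q)u\|^2 + Ch\||\p_\nu\varphi|^{1/2}e^{\varphi/h}\p_\nu u\|_{L^2(\p\Omega_+)}^2,
\end{equation*}
uniformly in $h$ small. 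The subtlety here, compared to the $C^2$ case, is that $q$ is only a distribution of negative order, so the Carleman estimate must be upgraded to shifted Sobolev norms $H^s_{\mathrm{scl}}$ with $-1/2 \le s \le 0$, absorbing $q$ by a product estimate in scaled Sobolev spaces (this is where $\delta > 0$ and the $H^{3/2+\delta}$ / $C^{1,\delta}$ assumptions enter quantitatively). I would then solve the inhomogeneous equation by a Hahn--Banach/duality argument off this Carleman estimate to produce CGO solutions $v_j = e^{(\pm\varphi + i\psi)/h}(a + r_j)$ with $\|r_j\|_{H^s_{\mathrm{scl}}} = o(1)$, where $a$ is a suitable smooth amplitude solving a transport equation and $\psi$ an appropriate phase.

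Finally, I would plug the two families of CGO solutions into the integral identity $\int (q_1-q_2) v_1 v_2 = \langle (\Lambda_{q_1}-\Lambda_{q_2})v_1|_{\p\Omega}, v_2|_{\p\Omega}\rangle$, using that the Cauchy data agree on $\tilde F$ (and that the solutions are built to be negligible on the complement, via the boundary Carleman term): this forces the boundary contribution to vanish and, in the limit $h \to 0$, yields that a certain (nonlinear) transform of $q_1 - q_2$ vanishes. Because the limiting weights in KSU only give access to a restricted set of ``light rays'', one does not immediately get the full Fourier transform of $q_1-q_2$; instead, as in KSU, one uses the freedom in choosing $x_0$ (equivalently, rotating the weight) together with an analyticity/unique-continuation argument in the complex frequency variable to conclude $q_1 = q_2$ in $\Omega$. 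Translating $q_1 = q_2$ back to $\gamma_1 = \gamma_2$ uses that $\log\gamma_1^{1/2} - \log\gamma_2^{1/2}$ solves an elliptic equation with zero Cauchy data on $\p\Omega$, hence vanishes. \textbf{The main obstacle} I anticipate is the Carleman estimate with boundary terms in the low-regularity setting: one must simultaneously (a) carry the estimate in negative-order scaled Sobolev spaces to accommodate the distributional potential, (b) control the boundary integrals $\||\p_\nu\varphi|^{1/2}e^{\varphi/h}\p_\nu u\|_{L^2(\p\Omega_\pm)}$ with the correct powers of $h$, and (c) verify the requisite product/multiplication estimates $H^{3/2+\delta}\cdot H^{-1/2+\delta} \hookrightarrow H^{-1/2+\e}$ (and the $C^{1,\delta}$ analogue) are compatible with the scaling, which is exactly the place where the precise regularity thresholds $3/2$, $3/2+\delta$, $C^{1,\delta}$ are forced.
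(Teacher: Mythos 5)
Your skeleton (boundary determination, CGO solutions via a Carleman estimate, an integral identity, then recovery of $q$ and hence $\gamma$) matches the paper's, but there are two genuine gaps at the technical heart of the argument, both concerning how the extra half-derivative of regularity is actually used. First, your remainder estimate $\|r_j\|_{H^s_{\mathrm{scl}}}=o(1)$ is too weak: in the interior term of the integral identity one must control products like $\|\nabla r_1\|_{L^2}\|r_2\|_{L^2}$, which requires $\|r_j\|_{L^2}=o(h^{1/2})$ and $\|\nabla r_j\|_{L^2}=o(h^{-1/2})$. The paper obtains this sharpened estimate by mollifying $A=\nabla\log\gamma$ at scale $\tau=h$ and exploiting $A\in H^{1/2}(\R^n)$ (which follows from $\gamma\in W^{1,\infty}\cap H^{3/2}$), so that $\|A-A_h\|_{L^2}=o(h^{1/2})$. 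With only $o(1)$ remainders the interior limit already fails, even for full data.

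Second, and more seriously, your plan for killing the boundary integral over $\p\Omega\setminus\tilde F$ does not work as stated. Applying a boundary Carleman estimate directly to the first-order equation $-\Delta u-\nabla\log\gamma\cdot\nabla u=0$ (or to $-\Delta+q$ in negative-order norms) produces, when estimating $\p_\nu(\tilde u_1-u_2)$ on $\p\Omega_+$, an interior term of size $\mathcal{O}(h^{-1/2})\|\nabla\log\gamma_1-\nabla\log\gamma_2\|_{L^2(\Omega)}$; this quantity is exactly what one is trying to prove is zero, it is not small in $h$, and no shift of Sobolev index or product estimate removes it. The paper's resolution, following P\"aiv\"arinta--Panchenko--Uhlmann and Knudsen, is to conjugate each equation by $e^{w_{j,h}/2}$ with $w_{j,h}=(\log\gamma_j)*\Psi_h$, so that the first-order coefficient becomes $A_{j,h}-A_j$ with $\|A_{j,h}-A_j\|_{L^2}=o(h^{1/2})$. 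The price is that the conjugated difference $e^{w_{1,h}/2}\tilde u_1-e^{w_{2,h}/2}u_2$ no longer vanishes on $\p\Omega$, which forces an extension of the Dos Santos Ferreira--Kenig--Sj\"ostrand--Uhlmann boundary Carleman estimate to functions with nonzero traces (Section 3 of the paper), and a delicate analysis of the resulting extra boundary terms; it is precisely there that the additional $\delta$ of regularity ($C^{1,\delta}$ or $H^{3/2+\delta}$) is consumed, via estimates such as $\|w_{j,h}-w_j\|_{L^2(\p\Omega)}=o(h)$ and $\|\nabla w_{j,h}-\nabla w_j\|_{L^2(\p\Omega)}=o(1)$. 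Your proposal identifies the boundary Carleman estimate as the main obstacle but proposes the wrong remedy (negative-order norms and multiplication estimates), so the partial-data step --- the actual novelty of the theorem --- is not established. A minor further point: the identity $\int(q_1-q_2)v_1v_2$ needs to be interpreted through a bilinear form (Brown's identity with the $\nabla\gamma_j^{1/2}$ terms), since $q_1-q_2\in H^{-1}$ and $v_1v_2$ is only a product of $H^1$ functions.
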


\textbf{Remark 1}.  As observed in \cite{KenSjUhl2007}, if $\Omega$ is strictly convex, then the set $F(x_0)$ can be made arbitrarily small, by choosing the point $x_0$ suitably.

\textbf{Remark 2}. Theorem \ref{thm_main} is applicable to conductivities in the Sobolev spaces $W^{\frac{3}{2}+\delta, 2n}(\Omega)$ and $W^{\frac{3}{2},2n+\delta}(\Omega)$, considered in partial data results of  \cite{Knudsen_2006} and \cite{Rodriguez}, and in the full data result of \cite{Brown_Torres_2003}, respectively.  Indeed,  by Sobolev embedding, we have 
\[
W^{\frac{3}{2}+\delta, 2n}(\Omega) \subset C^{1,\delta}(\overline{\Omega}), \quad W^{\frac{3}{2},2n+\delta}(\Omega)\subset C^{1,\frac{\delta}{4n+2\delta}}(\overline{\Omega}),
\]
see \cite[Theorem 7.63]{Adams_book}. It is also easy to see that 
\[
W^{\frac{3}{2}+\delta, 2n}(\Omega) \subset H^{\frac{3}{2}+\delta}(\Omega), \quad
W^{\frac{3}{2},2n+\delta}(\Omega)\subset H^{\frac{3}{2}}(\Omega).
\] 

\textbf{Remark 3}. The existing proofs of the global uniqueness results in the Calder\'on problem for conductivities with fewer than $3/2$ derivatives, in the case of the full data, developed in \cite{Hab_Tataru},  \cite{Haberman} and \cite{Caro_Rogers}, rely crucially on the linear nature of the limiting Carleman weights involved and make use of some averaging techniques.  On the other hand, a key point in the partial data result of \cite{KenSjUhl2007} is to use more general non-linear limiting Carleman weights. Therefore, to go below $3/2$ derivatives in the partial data result of \cite{KenSjUhl2007}, it seems that a new approach would be needed.

Let us now describe the main ideas in the proof of Theorem \ref{thm_main}. 
A fundamental approach to the inverse conductivity problem, which we shall also follow in this work, is based on construction of the so called complex geometric optics solutions for the conductivity equation, see \cite{Syl_Uhl_1987}, \cite{KenSjUhl2007}. To this end, using the identify, 
\[
\gamma^{-1/2}\circ L_\gamma\circ \gamma^{-1/2}=\Delta-q, \quad
q=\frac{\Delta\gamma^{1/2}}{\gamma^{1/2}},
\]
we may reduce the problem of construction of such solutions to the corresponding problem for the Schr\"odinger equation $(-\Delta+q)v=0$ in $\Omega$. Here the potential $q\in L^\infty(\Omega)$ provided that $\gamma\in C^2(\overline{\Omega})$, while if $\gamma$ is merely Lipschitz continuous, the corresponding potential $q$ becomes a distribution in $H^{-1}(\Omega)$.  In Subsection \ref{sec_CGO_Lip}, using this reduction, we construct complex geometric optics solutions with limiting Carleman weights for the conductivity equation in the case of conductivities
of class $W^{1,\infty}(\Omega)$. Unfortunately, it turns out that the remainder estimates for such solutions are not strong enough to solve the inverse problem in this case, even for the full data. In Subsection \ref{sec_CGO_H_3_2} we therefore sharpen the remainder estimates for conductivities of class $W^{1,\infty}(\Omega)\cap H^{\frac{3}{2}}(\Omega)$. It turns out that these sharpened estimates do suffice to control the interior terms in some crucial integral identity, used to establish the equality of the conductivities, see Section \ref{sec_integral identity} and  \cite{Brown_1996}.

Another crucial ingredient needed to establish global uniqueness in the  Calder\'on problem with partial data is a Carleman estimate with boundary terms, see \cite{Bukhgeim_Uhlmann_2002},  \cite{KenSjUhl2007}, and \cite{DKSU_2007}. Since our conductivities give rise to potentials which are singular, when deriving such estimates, it turns out to be more convenient to work directly with the conductivity equation, which we write in the form 
\begin{equation}
\label{eq_cont_rewritten}
-\Delta u - A\cdot\nabla u=0 \quad \text{in}\quad \Omega,
\end{equation}  
where $A=\nabla \log \gamma\in L^\infty$. Boundary Carleman estimates with limiting Carleman weights for first order perturbations of the Laplacian have been established in \cite{DKSU_2007}. However, it seems that their direct application does not allow one to get rid of some boundary terms, computed over the inaccessible portion of the boundary.  Indeed, applying the boundary Carleman estimate of \cite{DKSU_2007} will produce a term of magnitude $\mathcal{O}(h^{-1/2})\|\nabla\log \gamma_1-\nabla \log\gamma_2\|_{L^2(\Omega)}$, $0 < h \ll 1$, which cannot be controlled as $h\to 0$. 

To overcome this difficulty, we shall follow an idea of  \cite{PPU_2003}, \cite{Knudsen_2006}, which consists of replacing the conductivity equation \eqref{eq_cont_rewritten} by its conjugated version which is of the form,
\begin{equation}
\label{eq_cont_rewritten_2}
-\Delta u+(A_h- A) \cdot \nabla u+V_h u=0 \quad \text{in}\quad \Omega.
\end{equation}
Here $A_h$ is a regularization of  $A$ and $V_h$ is a suitable potential.  An advantage of working with \eqref{eq_cont_rewritten_2} is that for $\gamma\in W^{1,\infty}(\Omega)\cap H^{3/2}(\Omega)$, we have $\|A_h- A\|_{L^2}=o(h^{1/2})$, as $h\to 0$. The price that we have to pay to work with 
\eqref{eq_cont_rewritten_2} is that we need to extend the boundary Carleman estimate of \cite{DKSU_2007} to the case of functions which need not vanish along the boundary of 
$\Omega$. This extension is carried out in Section \ref{sec_BCE}, and we hope that it might be of some independent interest. 

Let us finally remark that to get rid of the boundary terms in the integral identity of Section \ref{sec_integral identity}, we shall need a bit more regularity for the conductivities than $W^{1,\infty}(\Omega)\cap H^{3/2}(\Omega)$, as stated in Theorem \ref{thm_main}. Another technical reason for this additional regularity is that in the course of the proof, we need to extend the conductivities $\gamma_1$ and $\gamma_2$  to all of $\R^n$ so that  $\gamma_1=\gamma_2$ on $\R^n\setminus\overline{\Omega}$, and their regularity is preserved. 

The paper is organized as follows. In Section \ref{sec_CGO} we construct complex geometric optics solutions to the conductivity equation. Boundary Carleman estimates are established in Section \ref{sec_BCE}, and following \cite{Brown_1996}, we recall a basic integral identity in Section \ref{sec_integral identity}. Section \ref{sec_proof} is devoted to the proof of Theorem \ref{thm_main}. In Appendix \ref{app_estimates} we collect some standard approximation estimates needed in the main text, for the convenience of the reader. 

\vspace*{-5pt}
\section{Complex geometric optics solutions with limiting Carleman weights for conductivity equation} 

\label{sec_CGO}
\subsection{Lipschitz continuous conductivities}
\label{sec_CGO_Lip}
Let $\Omega\subset \R^n$, $n\ge 3$, be a bounded open set with $C^2$ boundary and let $\gamma\in W^{1,\infty}(\Omega)$ and $\gamma>0$ on $\overline{\Omega}$. We can extend $\gamma$ to a function on  $\R^n$ so that the extension, still denoted by $\gamma$, satisfies  
$0<\gamma\in W^{1,\infty}(\R^n)$ and $\gamma=1$ near infinity. 
Let 
\[
q=\frac{\Delta\gamma^{1/2}}{\gamma^{1/2}}=- \nabla \gamma^{1/2} \cdot \nabla\gamma^{-1/2} +\frac{1}{2}\Delta\log \gamma\in (H^{-1}\cap \mathcal{E}')(\R^n).
\]
Following \cite{Brown_1996},  \cite{Hab_Tataru}, we define the "multiplication by $q$" map 
\[
m_q:H^1(\R^n)\to H^{-1}(\R^n)
\]
by 
\begin{equation}
\label{eq_def_m_q}
\langle m_q(u),v \rangle_{\R^n}
=-\int_{\R^n} (\nabla \gamma^{1/2} \cdot \nabla\gamma^{-1/2})uvdx- \frac{1}{2}\int_{\R^n} \nabla \log \gamma \cdot \nabla(uv)dx,
\end{equation}
for $u,v\in H^1(\R^n)$. Here $\langle \cdot,\cdot\rangle_{\R^n}$ is the distribution duality on $\R^n$.  Whenever convenient we shall also view $m_q$ as a map $m_q: H^1(\Omega)\to H^{-1}(\Omega)$
given by
\begin{equation}
\label{eq_def_m_q_omega}
\langle m_q(u),v \rangle_{\Omega}
=-\int_{\Omega} (\nabla \gamma^{1/2} \cdot \nabla\gamma^{-1/2})uvdx- \frac{1}{2}\int_{\Omega} \nabla \log \gamma \cdot \nabla(uv)dx,
\end{equation}
for $u\in H^1(\Omega)$, $v\in C^\infty_0(\Omega)$. Here $\langle \cdot,\cdot\rangle_{\Omega}$ is the distribution duality on $\Omega$. 
Notice that when $u\in H^1(\R^n)\cap \mathcal{E}'(\overline{\Omega})$, the definitions \eqref{eq_def_m_q} and \eqref{eq_def_m_q_omega} agree on $\Omega$.

Following \cite{DKSU_2007, KenSjUhl2007}, we shall use the method of Carleman estimates to construct complex geometric optics solutions in $H^1(\Omega)$ for the Schr\"odinger equation 
\begin{equation}
\label{eq_2_Schr_eq}
-\Delta u +m_q(u)=0\quad \text{in}\quad\Omega. 
\end{equation}
We then know that $\gamma^{-1/2}u\in H^1(\Omega)$ satisfies  the conductivity equation 
\[
L_\gamma (\gamma^{-1/2}u)=0\quad \text{in}\quad \Omega.
\]

As in our work \cite{Krupchyk_Uhlmann_magnetic}, we shall rely on the Carleman estimate for the semiclassical Laplace operator $-h^2\Delta$ with a gain of two derivatives, established in   \cite{Salo_Tzou_2009}, see also \cite{KenSjUhl2007}.  Here $h>0$ is a small semiclassical parameter.  Let us proceed by recalling this estimate.  Let $\tilde \Omega$ be an open set in $\R^n$ such that $ \Omega\subset\subset\tilde \Omega$ and let
$\varphi\in C^\infty(\tilde \Omega,\R)$.  Consider the conjugated operator 
\[
P_\varphi=e^{\frac{\varphi}{h}}(-h^2\Delta) e^{-\frac{\varphi}{h}},
\]
with the semiclassical principal symbol
\begin{equation}
\label{eq_2_sem_principal}
p_\varphi(x,\xi)=\xi^2+2i\nabla \varphi\cdot \xi-|\nabla \varphi|^2, \quad x\in \tilde {\Omega},\quad  \xi\in \R^n. 
\end{equation}
We have for $(x,\xi)\in \overline{\Omega}\times \R^n$, $|\xi|\ge C\gg 1$, that $|p_\varphi(x,\xi)|\sim |\xi|^2$ so that $P_\varphi$ is elliptic at infinity, in the semiclassical sense. 
Following \cite{KenSjUhl2007}, we say that $\varphi$ is a limiting Carleman weight for $-h^2\Delta$ in $\tilde \Omega$, if $\nabla \varphi\ne 0$ in $\tilde \Omega$ and the Poisson bracket of $\Re p_\varphi$ and $\Im p_\varphi$ satisfies, 
\[
\{\Re p_\varphi,\Im p_\varphi\}(x,\xi)=0 \quad \textrm{when}\quad p_\varphi(x,\xi)=0, \quad (x,\xi)\in \tilde{\Omega}\times \R^n. 
\]
Examples of limiting Carleman weights  are  linear weights $\varphi(x)=\alpha\cdot x$, $\alpha\in \R^n$, $|\alpha|=1$, and logarithmic weights $\varphi(x)=\log|x-x_0|$,  with $x_0\not\in \tilde \Omega$.  In this paper we shall only use the logarithmic weights. 

Our starting point is the following result due to \cite{Salo_Tzou_2009}.
\begin{prop}
Let $\varphi$ be a limiting Carleman weight for the semiclassical Laplacian on $\tilde \Omega$, and let $\tilde \varphi=\varphi+\frac{h}{2\varepsilon}\varphi^2$.  Then 
for $0<h\ll \varepsilon\ll 1$ and $s\in\R$, we have
\begin{equation}
\label{eq_Carleman_lap}
\frac{h}{\sqrt{\varepsilon}}\|u\|_{H^{s+2}_{\emph{scl}}(\R^n)}\le C\|e^{\tilde \varphi/h}(-h^2\Delta)e^{-\tilde \varphi/h}u\|_{H^s_{\emph{scl}}(\R^n)},
\quad C>0,
\end{equation}
 for all $u\in C^\infty_0(\Omega)$.  
\end{prop}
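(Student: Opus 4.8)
The plan is to establish \eqref{eq_Carleman_lap} as a semiclassical Carleman estimate of subelliptic type, in the spirit of \cite{KenSjUhl2007}. First I would reduce to $s=0$: since $\langle hD\rangle^{s}$ is an elliptic semiclassical Fourier multiplier and $[\langle hD\rangle^{s},P_{\tilde\varphi}]$ is of one lower order, it suffices to prove $\frac{h}{\sqrt\varepsilon}\|u\|_{H^{2}_{\mathrm{scl}}(\R^n)}\le C\|P_{\tilde\varphi}u\|_{L^{2}(\R^n)}$ for $P_{\tilde\varphi}=e^{\tilde\varphi/h}(-h^{2}\Delta)e^{-\tilde\varphi/h}$. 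Writing $P_{\tilde\varphi}=A+iB$ with $A=\tfrac12(P_{\tilde\varphi}+P_{\tilde\varphi}^{*})$ and $B=\tfrac1{2i}(P_{\tilde\varphi}-P_{\tilde\varphi}^{*})$ formally self-adjoint, a direct computation gives $A=-h^{2}\Delta-|\nabla\tilde\varphi|^{2}$ and $B=\tfrac{h}{i}(2\nabla\tilde\varphi\cdot\nabla+\Delta\tilde\varphi)$, with semiclassical symbols $a=\xi^{2}-|\nabla\tilde\varphi|^{2}$ and $b=2\nabla\tilde\varphi\cdot\xi$ modulo $O(h)$; since $\nabla\tilde\varphi=(1+\tfrac h\varepsilon\varphi)\nabla\varphi$, all symbol bounds below are uniform in $0<h\ll\varepsilon\ll1$. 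As $u\in C^{\infty}_{0}(\Omega)$ there are no boundary contributions, and an integration by parts gives the identity
\[
\|P_{\tilde\varphi}u\|_{L^{2}}^{2}=\|Au\|_{L^{2}}^{2}+\|Bu\|_{L^{2}}^{2}+(i[A,B]u,u)_{L^{2}},
\]
where $i[A,B]$ is a semiclassical operator with principal symbol $h\{a,b\}$ and an error one factor of $h$ lower.

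The crux is a pointwise lower bound for $\{a,b\}$ produced by the limiting Carleman weight hypothesis together with the convexification $\tilde\varphi=\varphi+\tfrac h{2\varepsilon}\varphi^{2}$. One computes $\{a,b\}=4\,\mathrm{Hess}\,\tilde\varphi(\xi,\xi)+4\,\mathrm{Hess}\,\tilde\varphi(\nabla\tilde\varphi,\nabla\tilde\varphi)$, and expanding $\mathrm{Hess}\,\tilde\varphi$ gives, with $f=1+\tfrac h\varepsilon\varphi$,
\[
\{a,b\}=4f\bigl(\mathrm{Hess}\,\varphi(\xi,\xi)+f^{2}\mathrm{Hess}\,\varphi(\nabla\varphi,\nabla\varphi)\bigr)+\tfrac{4h}{\varepsilon}\bigl((\nabla\varphi\cdot\xi)^{2}+f^{2}|\nabla\varphi|^{4}\bigr).
\]
On the characteristic set $\{a=b=0\}$ one has $\nabla\varphi\cdot\xi=0$ and $|\xi|^{2}=f^{2}|\nabla\varphi|^{2}$, and applying the limiting Carleman weight identity $\{\Re p_\varphi,\Im p_\varphi\}=0$ on $\{p_\varphi=0\}$ to the vector $\xi/f$ shows that the first bracket above vanishes there; hence $\{a,b\}=\tfrac{4h}{\varepsilon}f^{2}|\nabla\varphi|^{4}$ on the characteristic set. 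Since the characteristic set is a smooth codimension-two submanifold cut out transversally by $a$ and $b$, division gives, on a fixed neighborhood $W$ of it, $\{a,b\}\ge c_{0}h/\varepsilon-C(|a|+|b|)$ with $c_{0},C>0$ uniform (using $\nabla\varphi\neq0$ on $\overline{\Omega}$ and $f\ge\tfrac12$); away from $W$ the operator $P_{\tilde\varphi}$ is semiclassically elliptic, $|p_{\tilde\varphi}|\gtrsim\langle\xi\rangle^{2}$, so $|h\{a,b\}|\ll a^{2}+b^{2}$ there. I expect this symbol analysis — keeping track of which terms are genuinely positive and which are absorbable as multiples of $a$ and $b$ — to be the main obstacle; everything else is bookkeeping with the scales $h\ll\varepsilon\ll1$.

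Given this, the estimate is assembled in the usual way. Insert pseudodifferential cutoffs $\mathrm{Op}(\chi_{1})+\mathrm{Op}(\chi_{0})=I$ with $\chi_{1}$ supported in $W$ and $\equiv1$ near the characteristic set. In $(i[A,B]u,u)$ the $\chi_{0}$-part is absorbed into $\tfrac12(\|Au\|^{2}+\|Bu\|^{2})$, since $|h\{a,b\}|\ll a^{2}+b^{2}$ on $\mathrm{supp}\,\chi_{0}$; for the $\chi_{1}$-part one uses $\{a,b\}\ge c_{0}h/\varepsilon-C(|a|+|b|)$ together with $h|a|\le\epsilon_{0}a^{2}+h^{2}/(4\epsilon_{0})$ (and similarly for $b$), producing a gain $\gtrsim\tfrac{h^{2}}{\varepsilon}(\mathrm{Op}(\chi_{1})u,u)$ at the price of $\epsilon_{0}(\|Au\|^{2}+\|Bu\|^{2})$, $C\epsilon_{0}^{-1}h^{2}\|u\|^{2}$, and lower-order commutator errors; and $(\mathrm{Op}(\chi_{1})u,u)\ge\|u\|^{2}-C(\|Au\|^{2}+\|Bu\|^{2})-Ch\|u\|^{2}$ because $1-\chi_{1}$ lives in the elliptic region. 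Choosing $\epsilon_{0}$ a small fixed constant, then $\varepsilon\ll\epsilon_{0}$ so that $C\epsilon_{0}^{-1}h^{2}\ll h^{2}/\varepsilon$, and finally $h$ small, the $\|Au\|^{2}+\|Bu\|^{2}$ terms carry a net nonnegative coefficient and are discarded, leaving the $L^{2}$ Carleman estimate $\tfrac{h}{\sqrt\varepsilon}\|u\|_{L^{2}}\le C\|P_{\tilde\varphi}u\|_{L^{2}}$. To gain two derivatives, split $u$ again by $\chi_{1},\chi_{0}$: on $\mathrm{Op}(\chi_{1})u$, supported where $\langle\xi\rangle\sim1$, the $H^{2}_{\mathrm{scl}}$ and $L^{2}$ norms are comparable, so the $L^{2}$ bound suffices; on $\mathrm{Op}(\chi_{0})u$, semiclassical elliptic regularity for $P_{\tilde\varphi}$ gives the $H^{2}_{\mathrm{scl}}$ bound with constant $O(1)\le O(\sqrt\varepsilon/h)$. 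The residual $O(h)\|u\|_{H^{1}_{\mathrm{scl}}}$ and $O(h^{2})$ errors are absorbed by running the $L^{2}$ and $H^{2}_{\mathrm{scl}}$ inequalities together as a bootstrap, and reinserting $\langle hD\rangle^{s}$ recovers the general $s$.
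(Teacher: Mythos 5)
The paper does not actually prove this proposition: it is quoted verbatim from Salo--Tzou \cite{Salo_Tzou_2009} (building on \cite{KenSjUhl2007}, \cite{DKSU_2007}), so there is no in-paper argument to compare against beyond the citation. Your write-up is a correct reconstruction of the standard positive-commutator proof underlying that result: the splitting $P_{\tilde\varphi}=A+iB$, the bracket $\{a,b\}=4\,\mathrm{Hess}\,\tilde\varphi(\xi,\xi)+4\,\mathrm{Hess}\,\tilde\varphi(\nabla\tilde\varphi,\nabla\tilde\varphi)$, the observation that the convexification contributes the uniformly positive term $\tfrac{4h}{\varepsilon}\bigl((\nabla\varphi\cdot\xi)^{2}+f^{2}|\nabla\varphi|^{4}\bigr)$ while the limiting-weight condition makes the remaining part vanish on the characteristic set (hence absorbable into multiples of $a$ and $b$), and the microlocal split into characteristic and elliptic regions to pass from $L^{2}$ to $H^{s+2}_{\mathrm{scl}}$ --- this is precisely the mechanism of the cited proof; compare also the operator-level identity \eqref{bc_7_1} that the paper imports from \cite{DKSU_2007} for its boundary variant. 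The only items left implicit are the routine symbol-to-operator conversions (division near the characteristic variety with constants uniform in $h/\varepsilon$, G\aa rding-type absorption), which you correctly flag as bookkeeping.
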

Here 
\[
\|u\|_{H^s_{\textrm{scl}}(\R^n)}=\|\langle hD \rangle^s u\|_{L^2(\R^n)},\quad \langle\xi  \rangle=(1+|\xi|^2)^{1/2},
\]
is the natural semiclassical norm in the Sobolev space  $H^s(\R^n)$, $s\in\R$.

We have the following result. 
\begin{prop}
Let $\varphi\in C^\infty(\tilde\Omega,\R)$ be a limiting Carleman weight for the semiclassical Laplacian on $\tilde \Omega$, and let $0<\gamma\in W^{1,\infty}(\R^n)$ be such that $\gamma=1$ near infinity. Then 
for all $h>0$ sufficiently small, we have 
\begin{equation}
\label{eq_Carleman_schr}
h\|u\|_{H^{1}_{\emph{scl}}(\R^n)}\le C\|e^{\varphi/h}(-h^2\Delta +h^2m_q)e^{-\varphi/h}u\|_{H^{-1}_{\emph{scl}}(\R^n)},
\end{equation}
 for all $u\in C^\infty_0(\Omega)$.  
\end{prop}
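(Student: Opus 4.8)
The goal is to upgrade the Carleman estimate \eqref{eq_Carleman_lap} for the semiclassical Laplacian (with its two-derivative gain) to the estimate \eqref{eq_Carleman_schr} for the Schrödinger-type operator $-h^2\Delta + h^2 m_q$, where $m_q$ is the singular "multiplication by $q$" map attached to a merely Lipschitz conductivity. The strategy is the standard perturbative one: apply \eqref{eq_Carleman_lap} with $s=-1$ (so that we gain from $H^{-1}_{\mathrm{scl}}$ to $H^{1}_{\mathrm{scl}}$, exactly the two derivatives we need to absorb a first-order/$H^{-1}$-valued perturbation), and then bound $h^2 \| e^{\tilde\varphi/h} m_q(e^{-\tilde\varphi/h} u)\|_{H^{-1}_{\mathrm{scl}}}$ by a small multiple of $h\|u\|_{H^1_{\mathrm{scl}}}$ so that it can be absorbed into the left-hand side. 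Here one should first replace $\varphi$ by the convexified weight $\tilde\varphi = \varphi + \frac{h}{2\varepsilon}\varphi^2$ with $\varepsilon$ fixed small (say $\varepsilon$ a small constant, or $\varepsilon\to 0$ slowly), note that $e^{\pm\tilde\varphi/h}$ and $e^{\pm\varphi/h}$ differ by a bounded factor with bounded derivatives on $\overline\Omega$ (uniformly in $h$), so the two formulations are equivalent up to constants, and then at the end revert to $\varphi$ in the statement.

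\textbf{Estimating the perturbation.} The key computation is to control $\langle hD\rangle^{-1}\big(e^{\tilde\varphi/h} m_q(e^{-\tilde\varphi/h} u)\big)$ in $L^2$. Using the definition \eqref{eq_def_m_q}, $m_q(w)$ is a sum of a genuine multiplication operator, $w\mapsto -(\nabla\gamma^{1/2}\cdot\nabla\gamma^{-1/2})\, w$, whose coefficient is in $L^\infty$ (since $\gamma\in W^{1,\infty}$), and a divergence-form term $w\mapsto -\tfrac12\,\mathrm{div}\big((\nabla\log\gamma)\, w\big) - \tfrac12 (\nabla\log\gamma)\cdot\nabla w$ — more precisely $\langle m_q(w),v\rangle = -\int(\nabla\gamma^{1/2}\cdot\nabla\gamma^{-1/2})wv - \tfrac12\int \nabla\log\gamma\cdot\nabla(wv)$, so against test functions the second piece is the pairing of $-\tfrac12\nabla\log\gamma$ with $\nabla(wv) = v\nabla w + w\nabla v$. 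The first term contributes $O(1)\|w\|_{L^2}$ in $H^{-1}_{\mathrm{scl}}$, hence (after taking $w = e^{-\tilde\varphi/h}u$ and moving the exponential weight back, which costs only bounded factors) $O(h^2)\|u\|_{L^2}\le O(h^2)\|u\|_{H^1_{\mathrm{scl}}}$; since $H^1_{\mathrm{scl}}$ has the factor $\langle hD\rangle$, this is even better than needed. The divergence-form term is the crucial one: writing $b = \nabla\log\gamma\in L^\infty$, the contribution to $\langle hD\rangle^{-1}\big(e^{\tilde\varphi/h}m_q(e^{-\tilde\varphi/h}u)\big)$ involves, schematically, $h^2\,\langle hD\rangle^{-1}\big(b\cdot\nabla u\big)$ plus $h^2\,\langle hD\rangle^{-1}\big(b\cdot(\nabla\tilde\varphi/h)\,u\big)$ after the exponential conjugation differentiates the weight; these are respectively bounded by $O(h^2)\|b\|_{L^\infty}\|\nabla u\|_{L^2}\cdot h^{-1}\sim O(h)\|u\|_{H^1_{\mathrm{scl}}}$ (using $\|\langle hD\rangle^{-1}(b\cdot\nabla u)\|_{L^2}\lesssim \|b\|_{L^\infty}\|hD u\|_{L^2}/h$) and by $O(h)\|b\|_{L^\infty}\|u\|_{L^2}$. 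Both are $O(h)$ times the $H^1_{\mathrm{scl}}$ norm but without the small factor $\varepsilon^{-1/2}$ improvement; this is exactly why the proposition only claims the estimate "for $h$ sufficiently small" with the weaker gain $h$ (not $h/\sqrt\varepsilon$) on the left — one keeps $\varepsilon$ fixed and uses $h/\sqrt\varepsilon$ on the LHS of \eqref{eq_Carleman_lap} versus $O(h)$ on the perturbation, so for $\varepsilon$ chosen small enough once and for all the constant in front of the perturbation is a small fraction of $h/\sqrt\varepsilon$ and gets absorbed.

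\textbf{The main obstacle.} The delicate point is that $m_q$ is only densely defined and takes values in $H^{-1}$, so one must be careful that all the pairings make sense and that conjugation by the (real, smooth, bounded with bounded derivatives on $\overline\Omega$) weight $e^{\pm\tilde\varphi/h}$ interacts correctly with $\langle hD\rangle^{\pm1}$ — in particular one needs the mapping property that $e^{\tilde\varphi/h}\langle hD\rangle^{-1} e^{-\tilde\varphi/h}$ and its relatives are bounded $L^2\to L^2$ with norm $O(1)$ uniformly in $h$, which follows since $\varphi$ and hence $\tilde\varphi$ is smooth and $\nabla\tilde\varphi = O(1)$ on $\overline\Omega$ (semiclassical pseudodifferential calculus, or a direct commutator estimate: $[\langle hD\rangle^{-1}, e^{-\tilde\varphi/h}]$ gains one power of $h$ relative to $h^{-1}\|\nabla\tilde\varphi\|_{L^\infty}$, i.e. is $O(1)$). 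Once these bookkeeping points are handled, the proof is: (1) pass from $\varphi$ to $\tilde\varphi$; (2) apply \eqref{eq_Carleman_lap} with $s=-1$; (3) expand $e^{\tilde\varphi/h}(-h^2\Delta + h^2 m_q)e^{-\tilde\varphi/h}u = e^{\tilde\varphi/h}(-h^2\Delta)e^{-\tilde\varphi/h}u + h^2 e^{\tilde\varphi/h}m_q(e^{-\tilde\varphi/h}u)$, estimate the second summand in $H^{-1}_{\mathrm{scl}}$ by $C h\|u\|_{H^1_{\mathrm{scl}}}$ using $\gamma\in W^{1,\infty}$ as above; (4) absorb, using that $Ch < \frac{h}{2\sqrt\varepsilon}$ for $\varepsilon$ fixed small and $h$ small; (5) revert to $\varphi$, paying bounded constants. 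I expect essentially no further difficulty beyond making the $H^{-1}$/weight juggling rigorous; the heart of the matter is simply that the Laplace Carleman estimate's two-derivative gain exactly matches the $H^{-1}\to H^1$ loss of the singular zeroth-order perturbation coming from a Lipschitz conductivity.
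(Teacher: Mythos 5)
Your proposal is correct and follows essentially the same route as the paper: convexify the weight, apply \eqref{eq_Carleman_lap} with $s=-1$, bound $h^2\|e^{\tilde\varphi/h}m_q(e^{-\tilde\varphi/h}u)\|_{H^{-1}_{\text{scl}}}$ by $\mathcal{O}(h)\|u\|_{H^1_{\text{scl}}}$ via the duality definition of $m_q$ and $\nabla\log\gamma\in L^\infty$, absorb for $\varepsilon$ fixed small, and revert to $\varphi$. The only difference is that the paper sidesteps your ``weight juggling'' concerns entirely by using the sup characterization \eqref{eq_charac_H_-} of the $H^{-1}_{\text{scl}}$ norm, in which the exponential factors cancel exactly in the pairing $\langle m_q(e^{-\tilde\varphi/h}u),e^{\tilde\varphi/h}v\rangle=\langle m_q(u),v\rangle$, so no commutator estimates or conjugated $\langle hD\rangle^{-1}$ mapping properties are needed.
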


\begin{proof}
In order to prove the estimate \eqref{eq_Carleman_schr} it will be convenient to use the following characterization of the semiclassical norm in the Sobolev space $H^{-1}(\R^n)$, 
\begin{equation}
\label{eq_charac_H_-}
\|u\|_{H_{\textrm{scl}}^{-1}(\R^n)}=\sup_{0\ne v\in C^\infty_0(\R^n)}\frac{|\langle u,v\rangle_{\R^n} |}{\|v\|_{H_{\textrm{scl}}^{1}(\R^n)}}.
\end{equation}

Let $\tilde \varphi=\varphi+\frac{h}{2\varepsilon}\varphi^2$ with $0<h\ll \varepsilon\ll 1$, and let $u \in C^\infty_0(\Omega)$.  Then for all $0\ne v\in C^\infty_0(\R^n)$, we have 
 \begin{align*}
|\langle  e^{\tilde \varphi/h} h^2m_q(&e^{-\tilde \varphi/h}u),v \rangle_{\R^n}|\\
&\le h^2 \int_{\R^n}|(\nabla \gamma^{1/2}\cdot \nabla \gamma^{-1/2})uv|dx+h^2\int_{\R^n}|\nabla \log\gamma\cdot \nabla(uv)|dx\\
&\le h^2\|\nabla \gamma^{1/2}\cdot \nabla \gamma^{-1/2}\|_{L^\infty(\R^n)}\|u\|_{L^2(\R^n)}\|v\|_{L^2(\R^n)}\\
&+2h \|\nabla \log \gamma\|_{L^\infty(\R^n)}\|u\|_{H^1_{\text{scl}}(\R^n)}\|v\|_{H^1_{\text{scl}}(\R^n)}\\
&\le \mathcal{O}(h)\|u\|_{H^1_{\text{scl}}(\R^n)}\|v\|_{H^1_{\text{scl}}(\R^n)},
\end{align*}
and therefore, uniformly in $\varepsilon$, 
\begin{equation}
\label{eq_2_3} 
\|e^{\tilde \varphi/h} h^2m_q(e^{-\tilde \varphi/h}u) \|_{H^{-1}_{\text{scl}}(\R^n)}\le \mathcal{O}(h)\|u\|_{H^1_{\text{scl}}(\R^n)}.
\end{equation}
Now choosing $\varepsilon >0$  sufficiently small but fixed, i.e. independent of $h$, we obtain from the estimate \eqref{eq_Carleman_lap} with $s=-1$ and the estimate \eqref{eq_2_3} that for all $h>0$ small enough, 
\[
\|e^{\tilde \varphi/h}(-h^2\Delta+ h^2m_q)(e^{-\tilde \varphi/h}u) \|_{H^{-1}_{\text{scl}}(\R^n)}\ge \frac{h}{C}\|u\|_{H^1_{\text{scl}}(\R^n)}, \quad C>0.
\]
This estimate together with the fact that  
\[
e^{-\tilde \varphi/h}u=e^{-\varphi/h}e^{-\varphi^2/(2\varepsilon)}u,
\]
implies \eqref{eq_Carleman_schr}. The proof is complete. 
\end{proof}

Now since the formal $L^2(\Omega)$ adjoint to the operator $e^{\varphi/h}(-h^2\Delta +h^2m_q)e^{-\varphi/h}$ is given by $e^{-\varphi/h}(-h^2\Delta +h^2m_q)e^{\varphi/h}$ and $-\varphi$ is also a limiting Carleman weight, by classical arguments involving the Hahn--Banach theorem, one converts the Carleman estimate  \eqref{eq_Carleman_schr} for the adjoint into the following solvability result, see \cite{Krupchyk_Uhlmann_magnetic} for the proof. 

\begin{prop}
\label{prop_solvability}
Let  $\gamma\in W^{1,\infty}(\Omega)$ be such that $\gamma>0$ on $\overline{\Omega}$,  and let $\varphi$ be a limiting Carleman weight for the semiclassical Laplacian on $\tilde \Omega$. If $h>0$ is small enough, then for any $v\in H^{-1}(\Omega)$, there is a solution $u\in H^1(\Omega)$ of the equation
\[
e^{\varphi/h}(-h^2\Delta +h^2m_q)e^{-\varphi/h}u=v\quad\textrm{in}\quad \Omega,
\]
which satisfies
\[
\|u\|_{H^1_{\emph{scl}}(\Omega)}\le \frac{C}{h}\|v\|_{H^{-1}_{\emph{scl}}(\Omega)}.
\]
\end{prop}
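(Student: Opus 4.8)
The plan is to derive the solvability statement from the Carleman estimate \eqref{eq_Carleman_schr} by a standard functional-analytic duality argument, exactly as indicated in the text before Proposition \ref{prop_solvability}. First I would observe that the formal $L^2(\Omega)$ adjoint of the conjugated operator $L_\varphi := e^{\varphi/h}(-h^2\Delta + h^2 m_q)e^{-\varphi/h}$ is $L_{-\varphi} = e^{-\varphi/h}(-h^2\Delta + h^2 m_q)e^{\varphi/h}$; here one uses that $m_q$, as defined in \eqref{eq_def_m_q}, is formally self-adjoint (the quadratic form in \eqref{eq_def_m_q} is symmetric in $u$ and $v$), and that conjugation by $e^{\pm\varphi/h}$ turns the adjoint of $e^{\varphi/h}(\cdot)e^{-\varphi/h}$ into $e^{-\varphi/h}(\cdot)e^{\varphi/h}$. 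Since $-\varphi$ is a limiting Carleman weight whenever $\varphi$ is (the defining Poisson-bracket condition is even in $\nabla\varphi$, as is visible from \eqref{eq_2_sem_principal}), Proposition applied to $-\varphi$ gives
\[
h\|w\|_{H^1_{\text{scl}}(\R^n)} \le C\,\|L_{-\varphi} w\|_{H^{-1}_{\text{scl}}(\R^n)} \quad \text{for all } w\in C^\infty_0(\Omega).
\]

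Next I would set up the Hahn--Banach / Riesz argument. Fix $v\in H^{-1}(\Omega)$. Consider the linear subspace $\mathcal{E} := \{L_{-\varphi} w : w\in C^\infty_0(\Omega)\}$ of $H^{-1}_{\text{scl}}(\R^n)$ (functions supported in $\overline\Omega$), and define a linear functional $\ell$ on $\mathcal{E}$ by $\ell(L_{-\varphi} w) = \langle w, v\rangle_{\Omega}$. The Carleman estimate for $-\varphi$ shows that $L_{-\varphi}$ is injective on $C^\infty_0(\Omega)$, so $\ell$ is well defined, and moreover
\[
|\ell(L_{-\varphi} w)| = |\langle w, v\rangle_\Omega| \le \|w\|_{H^1_{\text{scl}}(\Omega)} \|v\|_{H^{-1}_{\text{scl}}(\Omega)} \le \frac{C}{h}\,\|L_{-\varphi} w\|_{H^{-1}_{\text{scl}}(\R^n)}\,\|v\|_{H^{-1}_{\text{scl}}(\Omega)},
\]
so $\ell$ is bounded on $\mathcal{E}$ with the stated norm control. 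By Hahn--Banach, extend $\ell$ to a bounded functional on all of $H^{-1}_{\text{scl}}(\R^n)$ with the same norm; by the Riesz representation theorem (identifying the dual of $H^{-1}_{\text{scl}}$ with $H^1_{\text{scl}}$ via the semiclassical pairing), there is $u\in H^1(\R^n)$ with $\|u\|_{H^1_{\text{scl}}} \le \frac{C}{h}\|v\|_{H^{-1}_{\text{scl}}(\Omega)}$ such that $\ell(F) = \langle F, u\rangle$ for all $F$. Restricting to $F = L_{-\varphi} w$, $w\in C^\infty_0(\Omega)$, gives $\langle L_{-\varphi} w, u\rangle = \langle w, v\rangle_\Omega$, i.e. $\langle w, L_\varphi u\rangle = \langle w, v\rangle$ for all test functions $w$ on $\Omega$, which is precisely $L_\varphi u = v$ in $\Omega$ in the sense of distributions. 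Taking $u|_\Omega$ and noting $\|u|_\Omega\|_{H^1_{\text{scl}}(\Omega)} \le \|u\|_{H^1_{\text{scl}}(\R^n)}$ yields the claim.

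I expect the only genuinely delicate points to be bookkeeping rather than substance: first, making sure the duality between $H^{-1}_{\text{scl}}$ and $H^1_{\text{scl}}$ is used consistently with the correct $h$-weights (so the constant really is $C/h$ and not, say, $C/h^2$), which is handled by the characterization \eqref{eq_charac_H_-} of the $H^{-1}_{\text{scl}}$ norm already recorded in the excerpt; and second, checking that the $m_q$ term is legitimately handled in this low-regularity setting — that $m_q w \in H^{-1}$ for $w\in C^\infty_0(\Omega)$ and that the pairing $\langle L_{-\varphi} w, u\rangle$ makes sense for $u\in H^1$, which follows from the mapping property $m_q : H^1(\R^n)\to H^{-1}(\R^n)$ established around \eqref{eq_def_m_q} together with the self-adjointness of the form. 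Everything else is the textbook conversion of a Carleman estimate into solvability, and the text itself refers to \cite{Krupchyk_Uhlmann_magnetic} for the details, so I would keep the write-up short and cite that reference for the routine parts.
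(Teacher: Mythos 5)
Your proposal is correct and follows exactly the route the paper itself indicates: apply the Carleman estimate \eqref{eq_Carleman_schr} with the limiting weight $-\varphi$ to the formal adjoint $e^{-\varphi/h}(-h^2\Delta+h^2m_q)e^{\varphi/h}$ (using the symmetry of the form defining $m_q$), and convert it into solvability with the stated $C/h$ bound by the standard Hahn--Banach/Riesz duality argument, as carried out in the cited reference \cite{Krupchyk_Uhlmann_magnetic}. The bookkeeping points you flag (consistency of the semiclassical duality and the mapping property $m_q:H^1\to H^{-1}$ needed to make the pairings legitimate) are indeed the only delicate steps, and you handle them correctly.
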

Here 
\begin{align*}
&\|u\|_{H^1_{\textrm{scl}}(\Omega)}^2=\|u\|_{L^2(\Omega)}^2+\|hDu\|_{L^2(\Omega)}^2,\\
&\|v\|_{H^{-1}_{\textrm{scl}}(\Omega)}=\sup_{0\ne \psi\in C_0^\infty(\Omega)}\frac{|\langle v,\psi\rangle_{\Omega}|}{\|\psi\|_{H^1_{\textrm{scl}}(\Omega)}}.
\end{align*}

Let us construct complex geometric optics solution to the Schr\"odinger equation \eqref{eq_2_Schr_eq}, i.e. solutions of the form,
\begin{equation}
\label{eq_2_CGO_main}
u(x;h)=e^{\frac{\varphi+i\psi}{h}}(a(x)+r(x;h)).
\end{equation}
Here $\varphi\in C^\infty(\tilde \Omega,\R)$ is a limiting Carleman weight for the semiclassical Laplacian on $\tilde \Omega$, $\psi\in C^\infty(\tilde \Omega,\R)$ is a solution to the eikonal equation $p_\varphi(x,\nabla \psi)=0$ in $\tilde \Omega$, where $p_\varphi$ is given by \eqref{eq_2_sem_principal}, i.e. 
\begin{equation}
\label{eq_2_eikonal}
|\nabla \psi|^2=|\nabla \varphi|^2,\quad \nabla \varphi\cdot \nabla\psi=0\quad \text{in}\quad \tilde\Omega,
\end{equation}
$a\in C^\infty(\overline{\Omega})$ is an amplitude, and $r$ is a correction term.

Following \cite{DKSU_2007, KenSjUhl2007}, we fix a point $x_0\in \R^n\setminus{\overline{\textrm{ch}(\Omega)}}$ and let the limiting Carleman weight  be 
\begin{equation}
\label{eq_varphi}
\varphi(x)=\frac{1}{2}\log|x-x_0|^2,
\end{equation}
and 
\begin{equation}
\label{eq_psi}
\psi(x)=\frac{\pi}{2}-\arctan\frac{\omega\cdot (x-x_0)}{\sqrt{(x-x_0)^2-(\omega\cdot(x-x_0))^2}}=\textrm{dist}_{\mathbb{S}^{n-1}}\bigg(\frac{x-x_0}{|x-x_0|},\omega\bigg),
\end{equation}
where $\omega\in \mathbb{S}^{n-1}$ is chosen so that $\psi$ is smooth near $\overline{\Omega}$. Thus, given  $\varphi$, the function  $\psi$ satisfies the eikonal equation \eqref{eq_2_eikonal} near $\overline{\Omega}$. 

Conjugating the operator $-h^2\Delta+h^2m_q$ by $e^{\frac{\varphi+i\psi}{h}}$ and using \eqref{eq_2_eikonal}, we get 
\begin{equation}
\label{eq_2_10_1}
e^{-\frac{(\varphi+i\psi)}{h}}\circ(-h^2\Delta+h^2m_q)\circ e^{\frac{\varphi+i\psi}{h}}=-h^2\Delta -2(\nabla \varphi+i\nabla \psi)\cdot h\nabla -h(\Delta \varphi+i\Delta\psi)+h^2m_q.
\end{equation}

Substituting \eqref{eq_2_CGO_main} into \eqref{eq_2_Schr_eq},  and using \eqref{eq_2_10_1}, we obtain the following equation for $r$,
\begin{equation}
\label{eq_2_10_2}
e^{-\frac{(\varphi+i\psi)}{h}}(-h^2\Delta+h^2m_q)(e^{\frac{\varphi+i\psi}{h}}r) =h^2\Delta a-h^2m_q(a),
\end{equation}
provided that $a$ satisfies the first transport equation,
\begin{equation}
\label{eq_2_10_3}
2(\nabla \varphi+i\nabla \psi)\cdot \nabla a +(\Delta \varphi+i\Delta\psi)a=0 \quad \text{in}\quad\Omega.
\end{equation}
Thanks to the works \cite{DKSU_2007}, \cite{KenSjUhl2007}, we know that the transport equation \eqref{eq_2_10_3} is of a Cauchy--Riemann type  and that it has a non-vanishing solution  $a\in C^\infty(\overline{\Omega})$.  

Applying now the solvability result of Proposition \ref{prop_solvability}, we conclude that for all $h>0$ small enough, there exists $r(x;h)\in H^1(\Omega)$ satisfying \eqref{eq_2_10_2}
such that 
\begin{equation}
\label{eq_2_10_4}
\|r\|_{H^1_{\text{scl}}(\Omega)}\le \mathcal{O}(h)+\mathcal{O}(h)\|m_q(a)\|_{H^{-1}_{\text{scl}}(\Omega)}.
\end{equation}

We shall next estimate $h\|m_q(a)\|_{H^{-1}_{\text{scl}}(\Omega)}$. Letting $0\ne v\in C^\infty_0(\Omega)$, we get 
\begin{equation}
\label{eq_2_10_5}
\begin{aligned}
|\langle  h m_q(a),v\rangle_{\Omega}|&\le h \int_{\Omega} |(\nabla \gamma^{1/2}\cdot \nabla \gamma^{-1/2}) a v|dx+h\bigg|\int_{\Omega} \nabla \log \gamma \cdot \nabla (av)dx\bigg|\\
&\le \mathcal{O}(h)\|v\|_{L^2(\Omega)}+\mathcal{O}(h) I,
\end{aligned}
\end{equation}
where 
\begin{equation}
\label{eq_2_10_5_1}
I:=\bigg|\int_{\Omega} a \nabla \log \gamma \cdot \nabla vdx\bigg|.
\end{equation}
Thus, we only need to estimate $hI$. 

We have $A:=\nabla \log \gamma \in (L^\infty\cap \mathcal{E}')(\R^n)\subset L^p(\R^n)$, $1\le p\le \infty$.  Let 
\begin{equation}
\label{eq_mollifier_def}
\Psi_\tau(x)=\tau^{-n}\Psi(x/\tau), \quad \tau>0,
\end{equation}
 be the usual mollifier with $\Psi\in C^\infty_0(\R^n)$, $0\le \Psi\le 1$, and 
$\int \Psi dx=1$.  Then $A_\tau=A*\Psi_\tau\in C_0^\infty(\R^n)$ and 
\begin{equation}
\label{eq_flat_est}
\|A-A_\tau\|_{L^2(\R^n)}=o(1), \quad \tau\to 0.
\end{equation}
An application of Young's inequality shows that 
\begin{equation}
\label{eq_flat_est_2}
 \|\p^\alpha A_\tau\|_{L^2(\R^n)}\le \|A\|_{L^2(\R^n)}\|\p^\alpha \Psi_\tau\|_{L^1(\R^n)}\le \mathcal{O}(\tau^{-|\alpha|}),\quad \tau\to 0, \quad |\alpha|\ge 0.
\end{equation}
Using \eqref{eq_flat_est} and \eqref{eq_flat_est_2},  we get integrating by parts, 
\begin{equation}
\label{eq_2_10_6}
\begin{aligned}
hI &\le \mathcal{O}(h)\int_\Omega|(A-A_\tau)\cdot \nabla v|dx+h\bigg|\int_\Omega a A_\tau\cdot \nabla v\bigg|\\
&\le \mathcal{O}(h)\|A-A_\tau\|_{L^2(\Omega)}\|\nabla v\|_{L^2(\Omega)}+h\| a\nabla A_\tau+A_\tau\cdot\nabla a \|_{L^2(\Omega)}\|v\|_{L^2(\Omega)}\\
&\le o_{\tau\to 0}(1)\|v\|_{H^1_{\text{scl}}(\Omega)}+h\mathcal{O}(\tau^{-1})\|v\|_{H^1_{\text{scl}}(\Omega)}.
\end{aligned}
\end{equation}
Choosing now $\tau=h^\sigma$ with some $0<\sigma<1$, we obtain from \eqref{eq_2_10_4}, \eqref{eq_2_10_5} and \eqref{eq_2_10_6} that $\|r\|_{H^1_{\text{scl}}(\Omega)}=o(1)$ as $h\to 0$. 

Summing up, we have the following result on the existence of complex geometric optics solutions for Lipschitz continuous conductivities. 

\begin{prop}
\label{prop_CGO_lipschitz}
Let $\Omega \subset\R^n$, $n\ge 3$, be  a bounded open set with $C^2$ boundary and let $\tilde \Omega\subset \R^n$ be an open set such that  $\Omega\subset\subset \tilde \Omega$ . Let  $\gamma\in W^{1,\infty}(\Omega)$ be such that $\gamma>0$ on $\overline{\Omega}$. 
 Then for all $h>0$ small enough, there exists a solution $u(x;h)\in H^1(\Omega)$ to the conductivity equation $L_\gamma u=0$ in $\Omega$, of the form
\begin{equation}
\label{eq_prop_CGO}
u(x;h)=\gamma^{-1/2}e^{\frac{\varphi+i\psi}{h}}(a(x)+r(x;h)),
\end{equation}
where $\varphi\in C^\infty(\tilde \Omega,\R)$ is a limiting Carleman weight for the semiclassical Laplacian on $\tilde \Omega$, $\psi\in C^\infty(\tilde \Omega,\R)$ is a solution to the eikonal equation \eqref{eq_2_eikonal}, $a\in C^\infty(\overline{\Omega})$ is a solution of the first transport equation \eqref{eq_2_10_3}, and the remainder term $r$ is such that $\|r\|_{H^1_{\emph{\text{scl}}}(\Omega)}=o(1)$ as $h\to 0$. 
\end{prop}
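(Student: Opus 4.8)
The plan is to follow the standard Carleman-estimate construction of complex geometric optics solutions, adapted to the singular potential $m_q$ arising from a merely Lipschitz conductivity. First I would recall the reduction: by the conjugation identity $\gamma^{-1/2}\circ L_\gamma\circ\gamma^{-1/2}=\Delta-q$, it suffices to solve the Schr\"odinger-type equation $-\Delta u+m_q(u)=0$ in $\Omega$ and then set $\gamma^{-1/2}u$ as the desired solution of $L_\gamma(\gamma^{-1/2}u)=0$; here one must first extend $\gamma$ to $\R^n$ as a positive $W^{1,\infty}$ function equal to $1$ near infinity, so that $m_q$ and all the estimates make sense globally. Then I would make the CGO ansatz $u(x;h)=e^{(\varphi+i\psi)/h}(a+r)$ with $\varphi(x)=\tfrac12\log|x-x_0|^2$ the logarithmic limiting Carleman weight, $\psi$ the explicit conjugate phase solving the eikonal equation \eqref{eq_2_eikonal} near $\overline{\Omega}$, and $a\in C^\infty(\overline{\Omega})$ a non-vanishing solution of the first transport equation \eqref{eq_2_10_3} (a Cauchy--Riemann type equation, solvable by the cited references).

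Next I would conjugate $-h^2\Delta+h^2m_q$ by $e^{(\varphi+i\psi)/h}$, use the eikonal equation to cancel the leading terms, and derive the equation \eqref{eq_2_10_2} for the remainder $r$, namely $e^{-(\varphi+i\psi)/h}(-h^2\Delta+h^2m_q)(e^{(\varphi+i\psi)/h}r)=h^2\Delta a-h^2m_q(a)$. The solvability Proposition \ref{prop_solvability}, obtained from the Carleman estimate \eqref{eq_Carleman_schr} via Hahn--Banach, then furnishes $r\in H^1(\Omega)$ with $\|r\|_{H^1_{\mathrm{scl}}(\Omega)}\le (C/h)\|h^2\Delta a-h^2m_q(a)\|_{H^{-1}_{\mathrm{scl}}(\Omega)}$, which gives \eqref{eq_2_10_4}: $\|r\|_{H^1_{\mathrm{scl}}}\le\mathcal{O}(h)+\mathcal{O}(h)\|m_q(a)\|_{H^{-1}_{\mathrm{scl}}}$. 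Everything so far is routine; the crux is to show $h\|m_q(a)\|_{H^{-1}_{\mathrm{scl}}(\Omega)}=o(1)$.

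The main obstacle — and the only genuinely delicate point — is estimating the term $hI=h|\int_\Omega a\,\nabla\log\gamma\cdot\nabla v\,dx|$ tested against $v\in C_0^\infty(\Omega)$ with $\|v\|_{H^1_{\mathrm{scl}}}=1$. A crude bound using only $\nabla\log\gamma\in L^\infty$ pairs $L^2$ against $\|\nabla v\|_{L^2}=\mathcal{O}(h^{-1})$, which is not $o(1)$. The fix is a mollification argument: writing $A=\nabla\log\gamma$ and $A_\tau=A*\Psi_\tau$, I split $hI\le \mathcal{O}(h)\|A-A_\tau\|_{L^2}\|\nabla v\|_{L^2}+h|\int_\Omega aA_\tau\cdot\nabla v|$; in the first piece $\|A-A_\tau\|_{L^2}=o(1)$ as $\tau\to0$ kills the $h^{-1}$ from $\|\nabla v\|_{L^2}$, while in the second piece I integrate by parts to move the derivative off $v$, so it is bounded by $h\|a\nabla A_\tau+A_\tau\cdot\nabla a\|_{L^2}\|v\|_{L^2}\le h\mathcal{O}(\tau^{-1})$ using the Young-inequality bound $\|\partial^\alpha A_\tau\|_{L^2}=\mathcal{O}(\tau^{-|\alpha|})$. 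Choosing $\tau=h^\sigma$ with $0<\sigma<1$ balances the two and yields $hI=o(1)$, hence \eqref{eq_2_10_4} gives $\|r\|_{H^1_{\mathrm{scl}}(\Omega)}=o(1)$ as $h\to0$.

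Collecting these steps proves the proposition: $u(x;h)=\gamma^{-1/2}e^{(\varphi+i\psi)/h}(a+r)$ solves $L_\gamma u=0$ in $\Omega$, lies in $H^1(\Omega)$, has the stated form with the stated ingredients $\varphi,\psi,a$, and the remainder satisfies $\|r\|_{H^1_{\mathrm{scl}}(\Omega)}=o(1)$. I would remark that the decay obtained here, $o(1)$, is weaker than the $o(h^{1/2})$-type rate needed to close the inverse problem, which is precisely why the sharper analysis under the additional $H^{3/2}$ hypothesis is carried out in the next subsection.
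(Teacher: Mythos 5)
Your proposal is correct and follows essentially the same route as the paper: the reduction via $m_q$, the Carleman-estimate solvability result, the ansatz with the logarithmic weight and transport equation, and in particular the key mollification of $A=\nabla\log\gamma$ with the split into $\|A-A_\tau\|_{L^2}=o(1)$ against $\|\nabla v\|_{L^2}$ and an integration by parts for the $A_\tau$ piece, balanced by $\tau=h^\sigma$, $0<\sigma<1$. This is exactly the argument given in Subsection \ref{sec_CGO_Lip}, including the closing observation that the $o(1)$ remainder must be sharpened to $o(h^{1/2})$ under the extra $H^{3/2}$ hypothesis.
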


\subsection{Lipschitz continuous conductivities in  $H^{3/2}$}
\label{sec_CGO_H_3_2}
Let $\Omega\subset \R^n$, $n\ge 3$, be a bounded open set with $C^2$ boundary, and let $\gamma\in W^{1,\infty}(\Omega)\cap H^{\frac{3}{2}}(\Omega)$ and $\gamma>0$ on $\overline{\Omega}$.   In this subsection we shall improve the result of Proposition \ref{prop_CGO_lipschitz} by deriving sharpened estimates for the remainder $r$  in \eqref{eq_prop_CGO}.

Let us first show that we can extend $\gamma$ to a function $0<\gamma\in W^{1,\infty}(\R^n)$ such that $\gamma=1$ near infinity and $\gamma-1\in H^{3/2}(\R^n)$.  To this end, using a partition of unity, we see that it suffices to work locally near a point at $\p \Omega$, and flattening out $\p \Omega$ by a $C^2$ diffeomorphism, we may consider the problem of extending $\gamma\in W^{1,\infty}(\R^n_+)\cap H^{\frac{3}{2}}(\R^n_+)\cap\mathcal{E}'(\overline{\R^n_+})$ to all of $\R^n$. 

Following a standard argument, see \cite[Theorem 4.12]{Grubb_book}, we introduce the following linear operator on $C^\infty_{(0)}(\overline{\R^n_+})=\{u\in C^\infty(\overline{\R^n_+}):\supp (u) \text{ compact }\subset \overline{\R^n_+} \}$,
\[
(E u)(x',x_n)=\begin{cases} u(x',x_n), & x_n>0,\\
\sum_{j=1}^3 \lambda_j u(x',-jx_n)& x_n<0,
\end{cases}
\]
where $\lambda_j\in \R$ are determined by the system of equations,
\[
\sum_{j=1}^3 (-j)^k \lambda_j =1,\quad k=0, 1,2. 
\]
Then $E$ extends continuously to $E:L^2(\R^n_+)\to L^2(\R^n)$ and $E:H^2(\R^n_+)\to H^2(\R^n)$, and hence, by interpolation, 
\[
E:H^{s}(\R^n_+)\to H^{s}(\R^n), \quad 0\le s\le 2.
\]
One can easily check that  
\begin{align*}
E: C^1(\overline{\R^n_+})\cap H^{s}(\R^n_+)&\to 
C^1(\R^n)\cap H^{s}(\R^n), \quad 0\le s\le 2,\\
E: W^{1,\infty}(\R^n_+)\cap H^{s}(\R^n_+)&\to 
W^{1,\infty}(\R^n)\cap H^{s}(\R^n), \quad 0\le s\le 2,\\
E: C^{1,\delta}(\overline{\R^n_+})\cap H^{s}(\R^n_+)&\to 
C^{1,\delta}(\R^n)\cap H^{s}(\R^n), \quad 0\le s\le 2.
\end{align*}

Coming back to $\Omega$, we obtain that the conductivity $\gamma\in W^{1,\infty}(\Omega)\cap H^{\frac{3}{2}}(\Omega)$ such that  $\gamma>0$ on $\overline{\Omega}$ has an extension $\tilde \gamma\in W^{1,\infty}(\R^n)\cap H^{\frac{3}{2}}(\R^n)$ such that $\tilde \gamma>0$ in a neighborhood $V$ of $\overline{\Omega}$.  Letting $\varphi\in C^\infty_0(V)$ be such that $0\le \varphi\le 1$ and $\varphi=1$ near $\overline{\Omega}$, we see that  $\gamma=\tilde \gamma\varphi +1-\varphi$  satisfies the required properties.

Set $A=\nabla \log\gamma\in   (L^\infty\cap\mathcal{E}')(\R^n)$, and notice that $A\in H^{1/2}(\R^n)$. To see the latter property, we write $A=\gamma^{-1}\nabla(\gamma-1)$. Here $\nabla(\gamma-1)\in H^{1/2}(\R^n)$ and this space is stable under multiplication by bounded Lipschitz continuous functions on $\R^n$.  

Under our improved regularity assumptions, we shall now get sharpened estimates for the remainder in \eqref{eq_prop_CGO}. To this end, we only need to re-examine the estimate for $hI$, where $I$ is given in \eqref{eq_2_10_5_1}.  Now using  \eqref{eq_2_10_7} and  \eqref{eq_2_10_8}, we have
\begin{equation}
\label{eq_2_10_11}
\begin{aligned}
hI=\bigg|h\int_{\Omega} a A\cdot \nabla v dx\bigg|&\le \mathcal{O}(h)\|A-A_\tau\|_{L^2(\Omega)}\|\nabla v\|_{L^2(\Omega)}\\
&+h\|a \nabla A_\tau+ A_\tau\cdot \nabla a\|_{L^2(\Omega)}\|v\|_{L^2(\Omega)}\\
&\le (o(\tau^{1/2})+ho(\tau^{-1/2})+ \mathcal{O}(h))\|v\|_{H^1_{\text{scl}}(\Omega)}.
\end{aligned}
\end{equation}
Choosing $\tau=h$, we obtain from \eqref{eq_2_10_4}, \eqref{eq_2_10_5} and \eqref{eq_2_10_11} that $\|r\|_{H^1_{\text{scl}}(\Omega)}=o(h^{1/2})$ as $h\to 0$. 

Thus,  we have the following result. 
\begin{prop}
\label{prop_CGO_solutions}
Let $\Omega \subset\R^n$, $n\ge 3$, be  a bounded open set with $C^2$ boundary and let $\tilde \Omega\subset \R^n$ be an open set such that  $\Omega\subset\subset \tilde \Omega$.  Let $\gamma\in W^{1,\infty}(\Omega)\cap H^{\frac{3}{2}}(\Omega)$ and $\gamma>0$ on $\overline{\Omega}$.  Then for all $h>0$ small enough, there exists a solution $u(x;h)\in H^1(\Omega)$ to the conductivity equation $L_\gamma u=0$ in $\Omega$, of the form
\[
u(x;h)=\gamma^{-1/2}e^{\frac{\varphi+i\psi}{h}}(a(x)+r(x;h)),
\]
where $\varphi\in C^\infty(\tilde \Omega,\R)$ is a limiting Carleman weight for the semiclassical Laplacian on $\tilde \Omega$, $\psi\in C^\infty(\tilde \Omega,\R)$ is a solution to the eikonal equation \eqref{eq_2_eikonal}, $a\in C^\infty(\overline{\Omega})$ is a solution of the first transport equation \eqref{eq_2_10_3}, and the remainder term $r$ is such that $\|r\|_{H^1_{\emph{\text{scl}}}(\Omega)}=o(h^{1/2})$ as $h\to 0$. 
\end{prop}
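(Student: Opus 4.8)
The plan is to re-run the complex geometric optics construction from the proof of Proposition~\ref{prop_CGO_lipschitz} essentially unchanged, upgrading only the one estimate that the extra $H^{3/2}$ regularity allows us to sharpen. Thus I would again seek $u$ in the form \eqref{eq_prop_CGO}, $u=\gamma^{-1/2}e^{(\varphi+i\psi)/h}(a+r)$, with $\varphi$ the logarithmic weight, $\psi$ solving the eikonal equation \eqref{eq_2_eikonal}, and $a\in C^\infty(\overline{\Omega})$ a non-vanishing solution of the first transport equation \eqref{eq_2_10_3} (which exists by \cite{DKSU_2007,KenSjUhl2007}). The remainder $r$ must then satisfy \eqref{eq_2_10_2}, and Proposition~\ref{prop_solvability} produces $r\in H^1(\Omega)$ obeying \eqref{eq_2_10_4}, namely $\|r\|_{H^1_{\text{scl}}(\Omega)}\le\mathcal O(h)+\mathcal O(h)\|m_q(a)\|_{H^{-1}_{\text{scl}}(\Omega)}$. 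So the whole matter reduces, via \eqref{eq_2_10_5}, to bounding the quantity $hI$ with $I$ as in \eqref{eq_2_10_5_1}.

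This is where the new regularity enters. After the extension constructed above, one has $A=\nabla\log\gamma=\gamma^{-1}\nabla(\gamma-1)\in H^{1/2}(\R^n)$, since $\nabla(\gamma-1)\in H^{1/2}(\R^n)$ and $H^{1/2}(\R^n)$ is stable under multiplication by bounded Lipschitz functions. For the mollification $A_\tau=A*\Psi_\tau$ this replaces the crude bounds \eqref{eq_flat_est}, \eqref{eq_flat_est_2} by the sharpened rates $\|A-A_\tau\|_{L^2(\R^n)}=o(\tau^{1/2})$ and $\|\nabla A_\tau\|_{L^2(\R^n)}=o(\tau^{-1/2})$ as $\tau\to0$; I would prove these by splitting the Plancherel integral at $|\xi|\sim\tau^{-1}$ (which gives $\mathcal O(\tau^{1/2})$ and $\mathcal O(\tau^{-1/2})$ from $A\in H^{1/2}$) and then upgrading $\mathcal O$ to $o$ by approximating $A$ in $H^{1/2}$ by smooth functions. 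Then, writing $A=(A-A_\tau)+A_\tau$ in $I$ and integrating by parts in the $A_\tau$-term to move the gradient off $v$ (no boundary term, as $v\in C^\infty_0(\Omega)$), exactly as in \eqref{eq_2_10_6}, yields $hI\le\bigl(o(\tau^{1/2})+h\,o(\tau^{-1/2})+\mathcal O(h)\bigr)\|v\|_{H^1_{\text{scl}}(\Omega)}$, which is \eqref{eq_2_10_11}.

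The last step is the balance of parameters: the error term $o(\tau^{1/2})$ wants $\tau$ small, the term $h\,o(\tau^{-1/2})$ wants $\tau$ large, and the two meet at $\tau=h$, for which both are $o(h^{1/2})$. Tracing this back through \eqref{eq_2_10_5} gives $h\|m_q(a)\|_{H^{-1}_{\text{scl}}(\Omega)}=o(h^{1/2})$, and then \eqref{eq_2_10_4} gives $\|r\|_{H^1_{\text{scl}}(\Omega)}=\mathcal O(h)+o(h^{1/2})=o(h^{1/2})$, which is the assertion. I expect the only real subtlety to be this choice $\tau=h$: in the Lipschitz-only setting of Proposition~\ref{prop_CGO_lipschitz} one is forced to take $\tau=h^\sigma$ with $\sigma<1$ and only obtains $o(1)$, whereas the half-derivative improvement encoded in $A\in H^{1/2}$ --- i.e. in the hypothesis $\gamma\in H^{3/2}$ --- is exactly what makes $\tau=h$ admissible and hence upgrades the remainder to $o(h^{1/2})$. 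Besides this, one should not forget to invoke the facts recalled just before the statement: that the extension operator preserves $W^{1,\infty}\cap H^{3/2}$ and the positivity of $\gamma$ near $\overline{\Omega}$, and that the transport equation \eqref{eq_2_10_3} admits a non-vanishing $C^\infty(\overline{\Omega})$ solution.
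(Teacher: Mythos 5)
Your proposal is correct and follows essentially the same route as the paper: extend $\gamma$ so that $A=\nabla\log\gamma\in H^{1/2}(\R^n)$, replace the crude mollifier bounds by the rates $\|A-A_\tau\|_{L^2}=o(\tau^{1/2})$, $\|\nabla A_\tau\|_{L^2}=o(\tau^{-1/2})$ (the paper proves these in Lemma \ref{lem_Zhang_est} by dominated convergence rather than your dyadic splitting, but the two arguments are interchangeable), and then rebalance the estimate \eqref{eq_2_10_11} at $\tau=h$ to get $\|r\|_{H^1_{\text{scl}}(\Omega)}=o(h^{1/2})$. Your identification of $\tau=h$ as the point where the half-derivative gain is spent is exactly the paper's argument.
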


\section{Boundary Carleman estimates}
\label{sec_BCE}

\subsection{Boundary Carleman estimates for the Laplacian}
\label{sec_BCE_Laplace}

The following result is an extension of \cite[Proposition 2.3]{DKSU_2007} valid for functions which need not vanish along the boundary of $\Omega$.  
\begin{prop}
Let $\Omega\subset\R^n$, $n\ge 3$, be a bounded open set with  $C^2$ boundary. Let $\varphi\in C^\infty(\tilde \Omega)$, $\Omega\subset\subset\tilde \Omega\subset\R^n$, be a limiting Carleman weight for the semiclassical Laplacian, and set $\tilde \varphi=\varphi+\frac{h}{2\varepsilon}\varphi^2$, $\varepsilon>0$. Then for all $0<h\ll \varepsilon\ll 1$, we have 
\begin{equation}
\label{bc_-1}
\begin{aligned}
&\mathcal{O}(h)\|v\|^2_{L^2(\p \Omega)}+\mathcal{O}(h^3)\int_{\p\Omega}|\p_\nu v||v|dS
+\mathcal{O}(h^3)\int_{\p\Omega_+} (\p_\nu\tilde \varphi)|\p_\nu v|^2dS \\
&+\mathcal{O}(h^3)\|\nabla_t v\|^2_{L^2(\p \Omega)}
+\mathcal{O}(h^3)\int_{\p \Omega} |\nabla_t v||\p_\nu v|dS
\\
&+\mathcal{O}(1)\| e^{\frac{\tilde \varphi}{h}} (-h^2\Delta)(e^{-\frac{\tilde\varphi}{h}} v) \|^2_{L^2(\Omega)}
  \ge \frac{h^2}{\varepsilon}\|v\|^2_{H^1_{\emph{\text{scl}}}(\Omega)} -h^3\int_{\p\Omega_-} (\p_\nu\tilde \varphi)|\p_\nu v|^2dS,
\end{aligned}
\end{equation}
for all $v\in H^2(\Omega)$. Here $\nu$ is the unit outer normal to $\p\Omega$, $\nabla_t$ is the tangential component of the gradient, and 
\[
\p\Omega_\pm=\{x\in \p\Omega: \pm \p_\nu\varphi(x)  \ge 0\}.
\]

\end{prop}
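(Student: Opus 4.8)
The strategy is the standard integration-by-parts derivation of a boundary Carleman estimate, starting from the known interior estimate \eqref{eq_Carleman_lap}, but keeping careful track of all boundary contributions rather than discarding them via $v \in C^\infty_0(\Omega)$. First I would fix $v \in H^2(\Omega)$ and write $P_{\tilde\varphi} = e^{\tilde\varphi/h}(-h^2\Delta)e^{-\tilde\varphi/h}$, splitting it into its self-adjoint and skew-adjoint parts, $P_{\tilde\varphi} = A + iB$ with $A = -h^2\Delta - |\nabla\tilde\varphi|^2$ and $B = -ih(2\nabla\tilde\varphi\cdot\nabla + \Delta\tilde\varphi)$ (so $B = -i(h\nabla\tilde\varphi\cdot\nabla + \nabla\cdot h\nabla\tilde\varphi)$ in divergence-symmetric form). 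Then $\|P_{\tilde\varphi}v\|_{L^2(\Omega)}^2 = \|Av\|^2 + \|Bv\|^2 + i([A,B]v,v) + \text{boundary terms}$, where now the usual manipulations $(ABv,v) - (BAv,v)$ produce boundary integrals over $\p\Omega$ coming from each integration by parts. The commutator $i[A,B]$, on the characteristic set, is controlled by the limiting Carleman weight hypothesis (this is exactly where $\{\Re p_\varphi, \Im p_\varphi\} = 0$ on $p_\varphi = 0$ is used, together with the convexification $\tilde\varphi = \varphi + \frac{h}{2\varepsilon}\varphi^2$, which is what gives the positive lower bound of size $h^2/\varepsilon$ on $\|v\|_{H^1_{\text{scl}}}^2$ modulo lower order). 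This part is essentially verbatim from \cite{DKSU_2007, Salo_Tzou_2009, KenSjUhl2007}; the new content is purely the bookkeeping of the boundary terms.

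**Boundary term bookkeeping.** The heart of the proof is to identify which boundary integrals appear and with which powers of $h$. Integrating $(Av, Bv)$ and $(Bv, Av)$ by parts: the term $\int_\Omega h^2\Delta v \cdot \overline{(h\nabla\tilde\varphi\cdot\nabla v)}$ and its conjugate produce, via Green's formula, boundary integrals containing $h^3 \p_\nu v \cdot \overline{\nabla_t v}$, $h^3 (\p_\nu\tilde\varphi)|\p_\nu v|^2$, $h^3 |\nabla_t v|^2$, and $h^2 \p_\nu v \cdot \bar v$-type terms; lower-order pieces of $B$ (the $\Delta\tilde\varphi$ multiplication) and of $A$ (the $|\nabla\tilde\varphi|^2$ multiplication) against $\Delta v$ contribute terms like $h\, \p_\nu v\cdot \bar v$ and $h\,|v|^2$. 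One carefully collects the sign-definite boundary term: the piece $-h^3\int_{\p\Omega}(\p_\nu\tilde\varphi)|\p_\nu v|^2 dS$ emerges from the commutator/Green computation, and splitting $\p\Omega = \p\Omega_+ \cup \p\Omega_-$ according to the sign of $\p_\nu\varphi$ (note $\p_\nu\tilde\varphi$ has the same sign as $\p_\nu\varphi$ for $h,\varepsilon$ small since $\tilde\varphi' = \varphi'(1 + \frac{h}{\varepsilon}\varphi)$), the $\p\Omega_+$ part is a bad term we must move to the left (hence $\mathcal{O}(h^3)\int_{\p\Omega_+}(\p_\nu\tilde\varphi)|\p_\nu v|^2$ on the left), while the $\p\Omega_-$ part is favorably signed — but since we are proving a lower bound we keep it with a minus sign on the right, as in \eqref{bc_-1}. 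All remaining boundary integrals are bounded, crudely, by the listed quantities $\mathcal{O}(h)\|v\|_{L^2(\p\Omega)}^2$, $\mathcal{O}(h^3)\int|\p_\nu v||v|$, $\mathcal{O}(h^3)\|\nabla_t v\|_{L^2(\p\Omega)}^2$, $\mathcal{O}(h^3)\int|\nabla_t v||\p_\nu v|$, using Cauchy--Schwarz and absorbing where a small constant is available. One also uses that $\|v\|_{H^1_{\text{scl}}(\Omega)}^2 \lesssim \|Av\|^2 + \text{l.o.t.} + \text{bdry}$ to upgrade the $L^2$ control of $v$ and $h\nabla v$ to the full semiclassical $H^1$ norm on the right.

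**Reduction to $C^\infty$ and density.** Since the estimate involves $\p_\nu v$ on $\p\Omega$ and second derivatives in the interior, I would first prove \eqref{bc_-1} for $v \in C^\infty(\overline\Omega)$, where all the integrations by parts are unambiguous, and then extend to $v \in H^2(\Omega)$ by density: $C^\infty(\overline\Omega)$ is dense in $H^2(\Omega)$, the trace maps $H^2(\Omega) \to H^{3/2}(\p\Omega) \times H^{1/2}(\p\Omega)$, $v \mapsto (v|_{\p\Omega}, \p_\nu v|_{\p\Omega})$ are continuous (so all the boundary integrals are continuous in the $H^2$ topology — the products $|\p_\nu v||v|$, $|\nabla_t v||\p_\nu v|$ are fine since $H^{1/2}(\p\Omega) \times H^{1/2}(\p\Omega) \to L^1(\p\Omega)$), and both sides of \eqref{bc_-1} are continuous functionals of $v \in H^2(\Omega)$.

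**Main obstacle.** The genuinely delicate point is not any single inequality but the organization of the many boundary terms so that exactly the asserted ones survive with the asserted powers of $h$ — in particular making sure that the bad term $\mathcal{O}(h^3)\int_{\p\Omega_+}(\p_\nu\tilde\varphi)|\p_\nu v|^2$ is the only uncontrolled $|\p_\nu v|^2$-contribution on $\p\Omega_+$ (everything else involving $\p_\nu v$ must come paired with $v$ or $\nabla_t v$, so it is lower order on $\p\Omega$ after Cauchy--Schwarz, or it sits on $\p\Omega_-$ with a good sign). Getting the $h$-powers right requires being careful that the convexification parameter appears as $h^2/\varepsilon$ on the main term while the boundary terms have a clean power of $h$ independent of $\varepsilon$ (uniformity in $\varepsilon$), which in turn dictates that $\tilde\varphi$'s $\varepsilon$-dependent part only be used through the positivity of the commutator and not be allowed to degrade the boundary estimates; this is handled by noting $\nabla\tilde\varphi = \nabla\varphi + \mathcal{O}(h/\varepsilon) = \mathcal{O}(1)$ and $\Delta\tilde\varphi = \mathcal{O}(1)$ uniformly for $h \ll \varepsilon \ll 1$.
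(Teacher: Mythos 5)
Your proposal is correct and follows essentially the same route as the paper: the decomposition $e^{\tilde\varphi/h}(-h^2\Delta)e^{-\tilde\varphi/h}=\tilde P+i\tilde Q$, the identity $\|(\tilde P+i\tilde Q)v\|^2=\|\tilde Pv\|^2+\|\tilde Qv\|^2+i([\tilde P,\tilde Q]v,v)+\mathrm{BT}$, the positive commutator from the limiting Carleman weight condition plus convexification (quoted from \cite{DKSU_2007}), the tangential/normal splitting of the boundary integrands, the sign analysis on $\p\Omega_\pm$, and the density reduction to $v\in C^\infty(\overline\Omega)$. The only detail worth flagging is that the lower-order terms in the commutator expansion of \cite{DKSU_2007} have the form $\frac{h}{2}(a\tilde P+\tilde Pa)+\frac{h}{2}(b^w\tilde Q+\tilde Qb^w)$, so symmetrizing them against $v$ without compact support produces further boundary contributions beyond those from the initial cross terms; these fall under your "remaining boundary integrals" and are absorbed into the same listed quantities.
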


\begin{proof}
By  density, it suffices to establish \eqref{bc_-1} for $v\in C^\infty (\overline{\Omega})$.

We have 
\[
e^{\frac{\tilde \varphi}{h}}\circ hD\circ e^{-\frac{\tilde\varphi}{h}}=hD+i\nabla \tilde \varphi,
\]
and therefore, 
\[
e^{\frac{\tilde \varphi}{h}}\circ (-h^2\Delta)\circ e^{-\frac{\tilde\varphi}{h}}=\tilde P+i\tilde Q,
\]
where $\tilde P$ and $\tilde Q$ are the formally  self-adjoint operators given by
\begin{equation}
\label{bc_0}
\begin{aligned}
\tilde P&=-h^2\Delta -(\nabla \tilde \varphi)^2,\\
\tilde Q&=\frac{2h}{i}\nabla \tilde \varphi\cdot \nabla +\frac{h}{i}\Delta\tilde \varphi.
\end{aligned}
\end{equation} 

We write
\begin{equation}
\label{bc_1}
\|(\tilde P+i\tilde Q)v\|_{L^2(\Omega)}^2=\|\tilde P v\|^2_{L^2(\Omega)}+\|\tilde Q u\|^2_{L^2(\Omega)}+i (\tilde Qv,\tilde P v)_{L^2(\Omega)}-i(\tilde P v,\tilde Qv)_{L^2(\Omega)}.
\end{equation}

Integrating by parts, we get
\begin{equation}
\label{bc_2}
\begin{aligned}
(\tilde P v,\tilde Q v)_{L^2(\Omega)}&=\int_{\Omega} ih \tilde P v (2 \nabla \tilde \varphi \cdot \nabla \overline{v}+(\Delta \tilde \varphi) \overline{v} )dx\\
&=(\tilde Q\tilde P v,v)_{L^2(\Omega)}+2ih(\p_\nu \tilde \varphi \tilde Pv,v)_{L^2(\p \Omega)},
\end{aligned}
\end{equation}
and
\begin{equation}
\label{bc_3}
\begin{aligned}
(\tilde Q v,\tilde P v)_{L^2(\Omega)}&=\int_{\Omega} \tilde Q v(-h^2\Delta \overline{v}-(\nabla \tilde \varphi)^2\overline{v})dx\\
&=(\tilde P \tilde Q v,v )_{L^2(\Omega)}-h^2(\tilde Q v, \p_\nu v)_{L^2(\p \Omega)}+h^2(\p_\nu(\tilde Qv), v)_{L^2(\p \Omega)}.
\end{aligned}
\end{equation}

Substituting \eqref{bc_2} and \eqref{bc_3} into \eqref{bc_1},
 we obtain that 
 \begin{equation}
\label{bc_4}
 \|(\tilde P+i\tilde Q)v\|^2_{L^2(\Omega)}=\|\tilde Pv\|^2_{L^2(\Omega)}+\|\tilde Q u\|^2_{L^2(\Omega)}+i([\tilde P,\tilde Q]v,v)_{L^2(\Omega)}+BT_1,
 \end{equation} 
 where
  \begin{equation}
\label{bc_4_1}
 BT_1=ih^2(\p_\nu(\tilde Qv), v)_{L^2(\p \Omega)}-i h^2(\tilde Q v, \p_\nu v)_{L^2(\p \Omega)}+2h(\p_\nu \tilde \varphi \tilde Pv,v)_{L^2(\p \Omega)}.
\end{equation}

Let us now understand the boundary terms $BT_1$ in \eqref{bc_4_1}. In doing so, we shall use the following expression for the Laplacian and the gradient on the boundary, see \cite[p. 16]{Choulli_book}, \cite{Lee_Uhlmann}, 
\begin{equation}
\label{bc_5}
\begin{aligned}
\Delta v&=\Delta_t v+H \p_\nu v+\p_\nu^2 v \quad \text{on}\quad \p \Omega,\\
\nabla v&=(\p_\nu v)\nu +\nabla_t v\quad \text{on}\quad \p \Omega,
\end{aligned}
\end{equation}
where $\Delta_t$ and $\nabla_t$ are the tangential Laplacian and gradient on $\p \Omega$, and $H\in C(\p \Omega)$.  First using \eqref{bc_0} and \eqref{bc_5}, we write
\begin{equation}
\label{bc_6}
\begin{aligned}
\tilde P&=-h^2\Delta_t-h^2H\p_\nu-h^2\p_\nu^2-(\nabla \tilde \varphi)^2 \quad \text{on}\quad \p \Omega,\\
\tilde Q&=\frac{2h}{i}(\p_\nu \tilde \varphi)\p_\nu+\frac{2h}{i}A_t+\frac{h}{i}\Delta\tilde \varphi \quad \text{on}\quad \p \Omega,
\end{aligned}
\end{equation}
where the vector field 
\begin{equation}
\label{bc_6_1}
A_t=\nabla_t\tilde \varphi \cdot \nabla_t
\end{equation}
 is  real tangential. Thus, 
\begin{equation}
\label{bc_7}
\p_\nu(\tilde Qv)=\frac{2h}{i}(\p^2_\nu\tilde \varphi)\p_\nu v+\frac{2h}{i}(\p_\nu\tilde \varphi)\p^2_\nu v +\frac{2h}{i}\p_\nu(A_t v)+\frac{h}{i}\p_\nu( v \Delta\tilde \varphi).
\end{equation}

Substituting \eqref{bc_6} and \eqref{bc_7} into \eqref{bc_4_1}, and using the fact that the terms containing  $\p^2_\nu v$ cancel out, we get 
\begin{equation}
\label{bc_7_0_1}
\begin{aligned}
BT_1=&\big( [h^3 \p_\nu(\Delta \tilde\varphi) -2h(\p_\nu \tilde \varphi) (\nabla\tilde \varphi)^2] v ,v\big)_{L^2(\p \Omega)}\\
&+h^3\big( [2\p_\nu ^2 \tilde \varphi +\Delta\tilde \varphi-2H \p_\nu \tilde \varphi]\p_\nu v,v\big)_{L^2(\p \Omega)}- h^3\big( (\Delta\tilde \varphi) v,\p_\nu v\big)_{L^2(\p \Omega)}\\
&-2h^3\big( (\p_\nu \tilde \varphi)\p_\nu v,\p_\nu v \big)_{L^2(\p \Omega)}\\
& +h^3 \big[ 2\big(\p_\nu(A_t v), v \big)_{L^2(\p \Omega)} -2\big(A_t v, \p_\nu v \big)_{L^2(\p \Omega)} +2\big(\nabla_t v, \nabla_t ((\p_\nu \tilde \varphi) v) \big)_{L^2(\p \Omega)}   \big].
\end{aligned}
\end{equation}
Here in the last term we have performed an integration by parts using the tangential  Laplacian.

In order not to have second derivatives of $v$ on the boundary $\p \Omega$, integrating by parts in the fifth term in \eqref{bc_7_0_1} and using \eqref{bc_6_1}, we obtain that 
\begin{equation}
\label{bc_7_0_2}
\begin{aligned}
 \big(\p_\nu(A_t v), v \big)_{L^2(\p \Omega)}&=\big(A_t \p_\nu v, v \big)_{L^2(\p \Omega)}+\big(X v, v \big)_{L^2(\p \Omega)}\\
 &=-\big( \p_\nu v, A_t v \big)_{L^2(\p \Omega)}-\big(\p_\nu v, (\div A_t) v \big)_{L^2(\p \Omega)}+\big(X v, v \big)_{L^2(\p \Omega)}\\
 &=-\big( \p_\nu v, A_t v \big)_{L^2(\p \Omega)}-\big(\p_\nu v, (\Delta_t\tilde \varphi) v \big)_{L^2(\p \Omega)}+\big(X v, v \big)_{L^2(\p \Omega)}.
\end{aligned}
\end{equation}
Here the vector field $X$ is given by 
\[
X=[\p_\nu,A_t].
\]

Let us now consider the non-boundary terms in \eqref{bc_4}. To understand the term $i([\tilde P, \tilde Q]v,v)_{L^2(\Omega)}$, we recall  from \cite[p. 473]{DKSU_2007} that
\begin{equation}
\label{bc_7_1}
\begin{aligned}
i[\tilde P, \tilde Q]=\frac{4h^2}{\varepsilon}\bigg(1+\frac{h}{\varepsilon}\varphi \bigg)^2(\nabla\varphi)^4+\frac{h}{2}(a\tilde P+\tilde P a)+\frac{h}{2}(b^w\tilde Q+\tilde Q b^w)+h^3c(x),
\end{aligned}
\end{equation}
where 
\begin{equation}
\label{bc_7_2}
a(x)=\frac{4h}{\varepsilon}(\nabla \tilde \varphi)^2-4\frac{\tilde \varphi''\nabla \tilde \varphi \cdot \nabla \tilde \varphi}{(\nabla\tilde \varphi)^2},
\end{equation}
\[
b(x,\xi)=\lambda(x)\cdot\xi,
\]
with $\lambda$ being a real $C^\infty$ vector field, and $c\in C^\infty(\overline{\Omega})$. 
Here also 
\begin{equation}
\label{bc_8}
b^w=\frac{1}{2}(\lambda(x)\circ hD_x+hD_x\circ\lambda(x))=\lambda(x)\cdot hD_x+\frac{h}{2i}\div\lambda,
\end{equation}
is the semiclassical Weyl quantization of $b$.

Using the fact that  $4 (1+\frac{h}{\varepsilon}\varphi )^2(\nabla\varphi)^4\ge 1/C$ on $\overline{\Omega}$, we obtain from \eqref{bc_7_1} that for $0<h\ll \varepsilon$ small enough,  
\begin{equation}
\label{bc_8_1}
\begin{aligned}
i([\tilde P,& \tilde Q]v,v)_{L^2(\Omega)} \ge \frac{h^2}{C\varepsilon}\|v\|^2_{L^2(\Omega)}+ \frac{h}{2}\big( (a\tilde P+\tilde P a)v, v\big)_{L^2(\Omega)}\\
&+ \frac{h}{2}\big( (b^w\tilde Q+\tilde Q b^w) v, v\big)_{L^2(\Omega)} +h^3(cv,v)_{L^2(\Omega)}\\
&\ge \frac{h^2}{C\varepsilon}\|v\|^2_{L^2(\Omega)}+ \frac{h}{2}\big( (a\tilde P+\tilde P a)v, v\big)_{L^2(\Omega)}+ \frac{h}{2}\big( (b^w\tilde Q+\tilde Q b^w) v, v\big)_{L^2(\Omega)}.
\end{aligned}
\end{equation}

Integrating by parts, we get 
\[
(\tilde P(av),v)_{L^2(\Omega)}=(v, a\tilde Pv)_{L^2(\Omega)}-h^2(\p_\nu (av),v)_{L^2(\p \Omega)}
+h^2(av,\p_\nu v)_{L^2(\p \Omega)},
\] 
and therefore,
\begin{equation}
\label{bc_9}
\frac{h}{2}\big( (a\tilde P+\tilde P a)v, v\big)_{L^2(\Omega)}=h\Re (a\tilde Pv, v)_{L^2(\Omega)}+ BT_2,
\end{equation}
where 
\begin{equation}
\label{bc_9_1}
BT_2=-\frac{h^3}{2}(\p_\nu (av),v)_{L^2(\p \Omega)}
+\frac{h^3}{2}(av,\p_\nu v)_{L^2(\p \Omega)}.
\end{equation}
Using \eqref{bc_8}, and integrating by parts, we obtain that 
\begin{equation}
\label{bc_10}
\begin{aligned}
(&b^w \tilde Qv, v)_{L^2(\Omega)}=(\tilde Q v, b^w v)_{L^2(\Omega)}+\frac{h}{i}\int_{\p\Omega}(\tilde Q v)\lambda\cdot \nu \overline{v}dS=(\tilde Q v, b^w v)_{L^2(\Omega)}\\
&-2h^2\big( \p_\nu \tilde \varphi\p_\nu v, \lambda\cdot \nu v\big)_{L^2(\p \Omega)}- 2h^2\big(A_t v, \lambda\cdot \nu v\big)_{L^2(\p \Omega)}
-h^2\big( (\Delta \tilde \varphi) v, \lambda\cdot \nu v\big)_{L^2(\p \Omega)}.
\end{aligned}
\end{equation}
Here we have also used \eqref{bc_6}. 

In view of \eqref{bc_0}, and another integration by parts, we get
\begin{equation}
\label{bc_11}
\begin{aligned}
\big( \tilde Q(b^w v), &v\big)_{L^2(\Omega)}= \big(b^w v,  \tilde Q v\big)_{L^2(\Omega)}
-2ih \int_{\p \Omega}(\p_\nu \tilde \varphi)(b^w v)\overline{v}dS\\
&=\big(b^w v,  \tilde Q v\big)_{L^2(\Omega)}
-h^2\big( (\p_\nu\tilde \varphi \div \lambda)v, v\big)_{L^2(\p\Omega)}
-2h^2\big( \p_\nu\tilde \varphi \lambda\cdot \nabla v, v\big)_{L^2(\p\Omega)}
\end{aligned}
\end{equation}
Here we have also used \eqref{bc_8}. 

It follows from \eqref{bc_10} and \eqref{bc_11} that 
\begin{equation}
\label{bc_12}
\frac{h}{2}\big( (b^w\tilde Q+\tilde Q b^w) v, v\big)_{L^2(\Omega)}=h\Re (\tilde Q v, b^w v)_{L^2(\Omega)}+BT_3,
\end{equation}
where 
\begin{equation}
\label{bc_12_1}
\begin{aligned}
&BT_3=-h^3\big( \p_\nu \tilde \varphi\p_\nu v, \lambda\cdot \nu v\big)_{L^2(\p \Omega)}- h^3\big(A_t v, \lambda\cdot \nu v\big)_{L^2(\p \Omega)}\\
&-\frac{h^3}{2}\big( (\Delta \tilde \varphi) v, \lambda\cdot \nu v\big)_{L^2(\p \Omega)}
-\frac{h^3}{2}\big( (\p_\nu\tilde \varphi \div \lambda)v, v\big)_{L^2(\p\Omega)}
-h^3\big( \p_\nu\tilde \varphi \lambda\cdot \nabla v, v\big)_{L^2(\p\Omega)}.
\end{aligned}
\end{equation}

In view of \eqref{bc_4}, \eqref{bc_8_1}, \eqref{bc_9} and \eqref{bc_12}, we get
\begin{equation}
\label{bc_13}
\begin{aligned}
 \|(\tilde P+i\tilde Q)v\|^2_{L^2(\Omega)}\ge \|\tilde Pv\|^2_{L^2(\Omega)}+\|\tilde Q v\|^2_{L^2(\Omega)}+ \frac{h^2}{C\varepsilon}\|v\|^2_{L^2(\Omega)}\\
 +h\Re (a\tilde Pv, v)_{L^2(\Omega)} +h\Re (\tilde Q v, b^w v)_{L^2(\Omega)}
  +BT_4,
\end{aligned}
\end{equation}
where 
\begin{equation}
\label{bc_13_1}
BT_4=BT_1+BT_2+BT_3.
\end{equation}

In view of \eqref{bc_7_2}, we see that $|a|=\mathcal{O}(1)$ uniformly in $0<h\ll\varepsilon\ll 1$. Using this and Peter--Paul's inequality, we have
\begin{equation}
\label{bc_14}
h| (a\tilde Pv, v)_{L^2(\Omega)}|\le \mathcal{O}(h)\|\tilde P v\|_{L^2(\Omega)}\|v\|_{L^2(\Omega)} \le \frac{1}{2}\|\tilde P v\|_{L^2(\Omega)}^2+\mathcal{O}(h^2)\|v\|^2_{L^2(\Omega)}.
\end{equation}

Using the fact that $b^w: H^1_{\text{scl}}(\Omega)\to L^2(\Omega)$ is bounded uniformly in $h$, cf. \eqref{bc_8}, we get for all $0<h\ll\varepsilon$ small enough, 
\begin{equation}
\label{bc_15}
\begin{aligned}
h| (\tilde Q v, b^w v)_{L^2(\Omega)}|\le \mathcal{O}(h)\|\tilde Q v\|_{L^2(\Omega)}\big(\|v\|_{L^2(\Omega)}+\|h\nabla v\|_{L^2(\Omega)}\big)\\
\le  \frac{1}{2}\|\tilde Q v\|_{L^2(\Omega)} + \mathcal{O}(h^2) \|h\nabla v\|^2_{L^2(\Omega)}+\mathcal{O}(h^2)\|v\|_{L^2}^2.
\end{aligned}
\end{equation}
In the last estimate we have used Peter--Paul's inequality.

It follows from \eqref{bc_13}, \eqref{bc_14} and \eqref{bc_15} that for all $0<h\ll\varepsilon$ small enough, 
\begin{equation}
\label{bc_16}
\begin{aligned}
 \|(\tilde P+i\tilde Q)v\|^2_{L^2(\Omega)}\ge& \frac{1}{2}\|\tilde Pv\|^2_{L^2(\Omega)}+\frac{1}{2}\|\tilde Q v\|^2_{L^2(\Omega)}\\
 &+ \frac{h^2}{C\varepsilon}\|v\|^2_{L^2(\Omega)}-\mathcal{O}(h^2) \|h\nabla v\|^2_{L^2(\Omega)}+ BT_4.
\end{aligned}
\end{equation}

Using \eqref{bc_0} and integrating by parts, we have
\[
(\tilde P v, v)_{L^2(\Omega)}=\|h\nabla v\|^2_{L^2(\Omega)}-h^2\int_{\p \Omega}\p_\nu v\overline{v}dS- \|(\nabla \tilde \varphi)v\|^2_{L^2(\Omega)},
\]
and hence, 
\begin{equation}
\label{bc_17}
\|h\nabla v\|^2_{L^2(\Omega)}\le C\big( \|\tilde Pv\|^2_{L^2(\Omega)}+\|v\|^2_{L^2(\Omega)}+h^2|(\p_\nu v,v)_{L^2(\p \Omega)}|\big).
\end{equation}

It follows from \eqref{bc_16} and \eqref{bc_17} that for all $0<h\ll\varepsilon$ small enough, 
\begin{equation}
\label{bc_18}
\begin{aligned}
 \|(\tilde P+i\tilde Q)v\|^2_{L^2(\Omega)}&\ge  \bigg(\frac{1}{2}-\frac{h^2}{2C\varepsilon}\bigg)\|\tilde Pv\|^2_{L^2(\Omega)}\\
 &+ 
 \frac{h^2}{2C\varepsilon}\bigg(\frac{1}{C}\|h\nabla v\|^2_{L^2(\Omega)} - \|v\|^2_{L^2(\Omega)}-h^2|(\p_\nu v,v)_{L^2(\p \Omega)}|\bigg) \\
 &+ \frac{h^2}{C\varepsilon}\|v\|^2_{L^2(\Omega)}-\mathcal{O}(h^2) \|h\nabla v\|^2_{L^2(\Omega)}+ BT_4\\
& \ge \frac{h^2}{C\varepsilon}\|v\|^2_{H^1_{\text{scl}}(\Omega)} +BT_5,
\end{aligned}
\end{equation}
where 
\begin{equation}
\label{bc_19}
BT_5=BT_4-\mathcal{O}(h^3)|(\p_\nu v,v)_{L^2(\p \Omega)}|.
\end{equation}

Let us now understand the boundary terms $BT_4$. Using \eqref{bc_7_0_1}, \eqref{bc_7_0_2}, \eqref{bc_9_1}, \eqref{bc_12_1},  and \eqref{bc_13_1}, we get
\begin{equation}
\label{bc_20}
\begin{aligned}
&BT_4=\bigg( \bigg[-2h(\p_\nu \tilde \varphi) (\nabla\tilde \varphi)^2 +h^3 \p_\nu(\Delta \tilde\varphi)\\
&-\frac{h^3}{2}\p_\nu a -\frac{h^3}{2}(\lambda\cdot\nu)(\Delta\tilde \varphi)-\frac{h^3}{2}(\p_\nu \tilde \varphi)\div\lambda \bigg] v ,v\bigg)_{L^2(\p \Omega)}\\
&+h^3\big( [2\p_\nu ^2 \tilde \varphi +\Delta\tilde \varphi-2H \p_\nu \tilde \varphi-2(\Delta_t\tilde \varphi) +2X\cdot\nu -\frac{a}{2}-2(\p_\nu \tilde \varphi)(\lambda\cdot\nu) ]\p_\nu v,v\big)_{L^2(\p \Omega)}\\
&+h^3\big( [-\Delta\tilde \varphi+\frac{a}{2}] v,\p_\nu v\big)_{L^2(\p \Omega)}\\
&-2h^3\big( (\p_\nu \tilde \varphi)\p_\nu v,\p_\nu v \big)_{L^2(\p \Omega)}\\
& +h^3 \bigg[ -2\big( \p_\nu v, A_t v \big)_{L^2(\p \Omega)}+ 2 \big(X_t v,v\big)_{L^2(\p \Omega)} -2\big(A_t v, \p_\nu v \big)_{L^2(\p \Omega)} \\
&+2\big(\nabla_t v, (\p_\nu \tilde \varphi) \nabla_t v \big)_{L^2(\p \Omega)}  +2\big(\nabla_t v, (\nabla_t \p_\nu \tilde \varphi) v \big)_{L^2(\p \Omega)} - \big((\lambda\cdot \nu)A_t v,  v\big)_{L^2(\p \Omega)}\\
&-((\p_\nu\tilde \varphi)\lambda_t v,v)_{L^2(\p\Omega)} \bigg].
\end{aligned}
\end{equation}
Here we have used that $X=X_t+X\cdot \nu\p_\nu$ and $\lambda\cdot\nabla=\lambda_t+\lambda\cdot\nu\p_\nu$, where  $X_t$ and $\lambda_t$ are tangential vector fields.

Putting the boundary terms $BT_5$ to the left hand side of \eqref{bc_18}, and using  that 
\[
|(Y_t v,v)_{L^2(\p \Omega)}|\le C\|\nabla_t v\|_{L^2(\p \Omega)}\|v\|_{L^2(\p \Omega)}\le \frac{C}{2}(\|\nabla_t v\|^2_{L^2(\p \Omega)}+\|v\|^2_{L^2(\p \Omega)}),
\]
where $Y_t$ is a tangential vector field, 
together with  \eqref{bc_18},
\eqref{bc_19} and \eqref{bc_20}, we obtain \eqref{bc_-1}. The proof is complete. 
\end{proof}

\subsection{Boundary Carleman estimates for a first order perturbation of the Laplacian}
\label{subsection_BCE}

In this subsection we shall establish a boundary Carleman estimate for the operator 
\[
-\Delta +A\cdot \nabla+V 
\]
where $A\in L^\infty(\Omega, \C^n)$,  $V\in L^\infty(\Omega, \C)$ are possibly $h$--dependent with 
\begin{equation}
\label{eq_estimates_A_V}
\|A\|_{L^\infty(\Omega)}=\mathcal{O}(1), \quad \|V\|_{L^\infty(\Omega)}=\mathcal{O}\bigg(\frac{1}{h}\bigg),
\end{equation}
as  $h\to 0$. 
This estimate will play a crucial role in getting rid of the boundary terms over the unaccessible part of the boundary in the next section.

\begin{prop}
\label{prop_boundary_Carleman} The following Carleman estimate 
\begin{equation}
\label{eq_2_11_7}
\begin{aligned}
\mathcal{O}(h)&\|e^{-\frac{\varphi}{h}}u\|^2_{L^2(\p \Omega)}+\mathcal{O}(h^2)\int_{\p \Omega} e^{-\frac{2\varphi}{h}}|\p_\nu u||u|dS \\
&+\mathcal{O}(h^3)\int_{\p \Omega_{-}}(-\p_\nu \varphi)e^{-\frac{2\varphi}{h}}|\p_\nu u|^2dS\\
&+\mathcal{O}(h^3)\|e^{-\frac{\varphi}{h}}\nabla_t u\|^2_{L^2(\p \Omega)} 
+\mathcal{O}(h^3)\int_{\p \Omega} e^{-\frac{2\varphi}{h}}|\nabla_t u||\p_\nu u|dS
\\
&+\mathcal{O}(1)\| e^{-\varphi/h} (-h^2\Delta + hA\cdot h\nabla +h^2V) u\|^2_{L^2(\Omega)}\\
&\ge h^2( \|e^{-\frac{\varphi}{h}}u\|^2_{L^2(\Omega)}+ \|e^{-\frac{\varphi}{h}}h\nabla u\|^2_{L^2(\Omega)}) + h^3\int_{\p \Omega_{+}}(\p_\nu \varphi)e^{-\frac{2\varphi}{h}}|\p_\nu u|^2dS 
\end{aligned}
\end{equation}
holds for all $u\in H^2(\Omega)$ and all $h>0$ small enough. 
\end{prop}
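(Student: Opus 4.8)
The plan is to deduce \eqref{eq_2_11_7} from the boundary Carleman estimate \eqref{bc_-1} for the pure Laplacian by treating $hA\cdot h\nabla+h^2V$ as a perturbation and absorbing it into the right-hand side. Since $-\varphi$ is again a limiting Carleman weight, I would apply \eqref{bc_-1} with $\varphi$ replaced by $-\varphi$: with this sign convention the conjugating weight becomes $e^{-\varphi/h}$ up to a harmless bounded factor, and the sets $\partial\Omega_\pm$ of \eqref{bc_-1} get interchanged, so that the ``good'' boundary term is the one over $\partial\Omega_+$, exactly as needed in \eqref{eq_2_11_7}. Accordingly, set $\tilde\varphi=-\varphi+\tfrac{h}{2\varepsilon}\varphi^2$, and for $u\in H^2(\Omega)$ put $v=e^{\tilde\varphi/h}u\in H^2(\Omega)$, so that $e^{-\tilde\varphi/h}v=u$ and $e^{\tilde\varphi/h}(-h^2\Delta)(e^{-\tilde\varphi/h}v)=e^{\tilde\varphi/h}(-h^2\Delta u)$.

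The first step would be a perturbation estimate. Writing $-h^2\Delta u=(-h^2\Delta+hA\cdot h\nabla+h^2V)u-(hA\cdot h\nabla u+h^2Vu)$, conjugating by $e^{\tilde\varphi/h}$, and using the identity $e^{\tilde\varphi/h}(h\nabla u)=h\nabla v-(\nabla\tilde\varphi)v$ together with $\|\nabla\tilde\varphi\|_{L^\infty(\Omega)}=\mathcal O(1)$ (uniformly in $0<h\ll\varepsilon\ll1$) and the hypotheses \eqref{eq_estimates_A_V} — in particular $h^2\|V\|_{L^\infty(\Omega)}=\mathcal O(h)$ — one gets $\|e^{\tilde\varphi/h}(hA\cdot h\nabla u+h^2Vu)\|_{L^2(\Omega)}=\mathcal O(h)\|v\|_{H^1_{\text{scl}}(\Omega)}$ and hence
\[
\|e^{\tilde\varphi/h}(-h^2\Delta)(e^{-\tilde\varphi/h}v)\|^2_{L^2(\Omega)}\le 2\,\|e^{\tilde\varphi/h}(-h^2\Delta+hA\cdot h\nabla+h^2V)u\|^2_{L^2(\Omega)}+\mathcal O(h^2)\|v\|^2_{H^1_{\text{scl}}(\Omega)}.
\]
Substituting this into \eqref{bc_-1}, the error term $\mathcal O(h^2)\|v\|^2_{H^1_{\text{scl}}(\Omega)}$ can be moved to the right and absorbed into $\tfrac{h^2}{\varepsilon}\|v\|^2_{H^1_{\text{scl}}(\Omega)}$, provided $\varepsilon$ is chosen small but fixed, independently of $h$; from that point on $\varepsilon$ is frozen and all implied constants may depend on it.

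It then remains to rewrite the resulting inequality in terms of $u$ and the weight $e^{-\varphi/h}$. Since $\tilde\varphi=-\varphi+\mathcal O(h)$, one has $e^{\tilde\varphi/h}=e^{-\varphi/h}e^{\varphi^2/(2\varepsilon)}$ with $e^{\pm\varphi^2/(2\varepsilon)}$ and all its derivatives bounded on $\overline\Omega$, which turns the conjugated-operator term into $\mathcal O(1)\|e^{-\varphi/h}(-h^2\Delta+hA\cdot h\nabla+h^2V)u\|^2_{L^2(\Omega)}$; and, writing $h\nabla v=e^{\tilde\varphi/h}(h\nabla u+(\nabla\tilde\varphi)u)$ and using $e^{\varphi^2/\varepsilon}\ge1$, one checks that $\|v\|^2_{H^1_{\text{scl}}(\Omega)}\ge c(\|e^{-\varphi/h}u\|^2_{L^2(\Omega)}+\|e^{-\varphi/h}h\nabla u\|^2_{L^2(\Omega)})$, producing the interior part of the right-hand side of \eqref{eq_2_11_7}. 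The boundary terms of \eqref{bc_-1} are then rewritten through $v=e^{\tilde\varphi/h}u$ using $\partial_\nu v=e^{\tilde\varphi/h}(\partial_\nu u+h^{-1}(\partial_\nu\tilde\varphi)u)$, $\nabla_t v=e^{\tilde\varphi/h}(\nabla_t u+h^{-1}(\nabla_t\tilde\varphi)u)$, and $\partial_\nu\tilde\varphi=-\partial_\nu\varphi+\mathcal O(h)$; expanding the squares, every boundary term splits into a term of exactly the shape occurring on the left of \eqref{eq_2_11_7} plus lower-order contributions of the same shape (each factor of $u$ traded against a factor of $\partial_\nu u$ or $\nabla_t u$ costs a power of $h$), while the term $-h^3\int_{\partial\Omega_-}(\partial_\nu\tilde\varphi)|\partial_\nu v|^2\,dS$ from the right of \eqref{bc_-1} reproduces $h^3\int_{\partial\Omega_+}(\partial_\nu\varphi)e^{-2\varphi/h}|\partial_\nu u|^2\,dS$ up to such lower-order terms. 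Collecting everything and adjusting constants gives \eqref{eq_2_11_7}.

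The main obstacle is the perturbation step: \eqref{bc_-1} gains only $\mathcal O(h^2/\varepsilon)\|v\|^2_{H^1_{\text{scl}}}$ and no extra derivatives, so the first- and zeroth-order perturbations lie exactly at the borderline. This is precisely why the scaling $\|A\|_{L^\infty}=\mathcal O(1)$, $\|V\|_{L^\infty}=\mathcal O(1/h)$ of \eqref{eq_estimates_A_V} is the correct one, and why the quadratic correction $\tfrac{h}{2\varepsilon}\varphi^2$ in $\tilde\varphi$, with $\varepsilon$ small and fixed, is needed to create the slack that swallows the $\mathcal O(h^2)\|v\|^2_{H^1_{\text{scl}}}$ error. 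The remaining difficulty is the careful but routine bookkeeping needed to confirm that, after the substitution $v=e^{\tilde\varphi/h}u$, every error produced is of a type already present on the left of \eqref{eq_2_11_7}, so that nothing new has to be controlled.
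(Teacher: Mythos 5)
Your proposal is correct and follows essentially the same route as the paper: apply the boundary Carleman estimate \eqref{bc_-1} for the pure Laplacian with the convexified weight, bound the conjugated first- and zeroth-order terms by $\mathcal{O}(h)\|v\|_{H^1_{\text{scl}}(\Omega)}$ using \eqref{eq_estimates_A_V}, absorb this into $\frac{h^2}{\varepsilon}\|v\|^2_{H^1_{\text{scl}}(\Omega)}$ by fixing $\varepsilon$ small, and then undo the substitution $v=e^{\tilde\varphi/h}u$. The only (immaterial) difference is that you flip $\varphi\mapsto-\varphi$ at the outset, whereas the paper derives the estimate for the weight $e^{\varphi/h}$ first and performs the sign flip as the final step.
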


\begin{proof}
We have 
\begin{equation}
\label{eq_2_11_9}
\begin{aligned}
e^{\frac{\tilde \varphi}{h}}\circ &(-h^2\Delta +hA\cdot h\nabla +h^2V)\circ e^{-\frac{ \tilde \varphi }{h}}\\
&=e^{\frac{\tilde \varphi}{h}}\circ(-h^2\Delta)\circ e^{-\frac{ \tilde \varphi}{h}}+
hA\cdot h\nabla -hA\cdot \nabla \tilde \varphi+h^2V.
\end{aligned}
\end{equation}
Let $v\in H^2(\Omega)$.  Using that  $\|\nabla \tilde \varphi\|_{L^\infty(\Omega)}\le \mathcal{O}(1)$,  \eqref{eq_estimates_A_V}, we get  
\begin{equation}
\label{bcf_0}
\|hA\cdot h\nabla v -hA\cdot (\nabla \tilde \varphi) v+h^2Vv \|_{L^2(\Omega)}\le \mathcal{O}(h)\|v\|_{H^1_{\text{scl}}(\Omega)}. 
\end{equation}
Using that $a^2\le 2((a+b)^2+b^2)$, $a,b>0$, and \eqref{bcf_0}, we obtain from \eqref{bc_-1} that for all $0<h\ll \varepsilon\ll 1$, 
\begin{equation}
\label{bcf_1}
\begin{aligned}
\mathcal{O}(h)&\|v\|^2_{L^2(\p \Omega)}+\mathcal{O}(h^3)\int_{\p\Omega}|\p_\nu v||v|dS\\
&+\mathcal{O}(h^3)\int_{\p\Omega_+} (\p_\nu\tilde \varphi)|\p_\nu v|^2dS+ 
\mathcal{O}(h^3)\|\nabla_t v\|^2_{L^2(\p \Omega)}
+\mathcal{O}(h^3)\int_{\p \Omega} |\nabla_t v||\p_\nu v|dS
\\
&+\mathcal{O}(1)\| e^{\frac{\tilde \varphi}{h}}(-h^2\Delta +hA\cdot h\nabla +h^2V)(e^{-\frac{ \tilde \varphi }{h}}v) \|^2_{L^2(\Omega)}\\
 & \ge \frac{h^2}{\varepsilon}\|v\|^2_{H^1_{\text{scl}}(\Omega)}-h^3\int_{\p\Omega_-} (\p_\nu\tilde \varphi)|\p_\nu v|^2dS. 
\end{aligned}
\end{equation}

Let us now take $\varepsilon>0$ to be small but fixed and let $v=e^{\frac{\varphi^2}{2\varepsilon}}e^{\frac{\varphi}{h}}u$.  Using that 
\[
1\le e^{\frac{\varphi^2}{2\varepsilon}}\le C\quad \text{on}\quad \overline{\Omega},
\]
and for all $h$ small enough, 
\[
\frac{1}{2}\le \frac{\p_\nu\tilde\varphi}{\p_\nu \varphi}\le \frac{3}{2} \quad \text{on}\quad \overline{\Omega}
\]
and
\[
|\nabla(e^{\frac{\varphi^2}{2\varepsilon}}e^{\frac{\varphi}{h}})|
=e^{\frac{\varphi^2}{2\varepsilon}}e^{\frac{\varphi}{h}}\frac{1}{h}\bigg|\bigg(1+\frac{h}{\varepsilon}\varphi\bigg)\nabla\varphi\bigg|\le \mathcal{O}\bigg(\frac{1}{h}\bigg)e^{\frac{\varphi}{h}} \quad \text{on}\quad \overline{\Omega},
\]
we obtain that 
\begin{equation}
\label{bcf_3}
\begin{aligned}
\mathcal{O}(h)&\|e^{\frac{\varphi}{h}}u\|^2_{L^2(\p \Omega)}+\mathcal{O}(h^2)\int_{\p \Omega} e^{\frac{2\varphi}{h}}|\p_\nu u||u|dS +\mathcal{O}(h^3)\int_{\p \Omega_{+}}(\p_\nu \varphi)e^{\frac{2\varphi}{h}}|\p_\nu u|^2dS\\
&+\mathcal{O}(h^3)\|e^{\frac{\varphi}{h}}\nabla_t u\|^2_{L^2(\p \Omega)} 
+\mathcal{O}(h^3)\int_{\p \Omega} e^{\frac{2\varphi}{h}}|\nabla_t u||\p_\nu u|dS\\
&+\mathcal{O}(1)\| e^{\varphi/h} (-h^2\Delta + hA\cdot h\nabla +h^2V) u\|^2_{L^2(\Omega)}\\
&\ge h^2( \|e^{\frac{\varphi}{h}}u\|^2_{L^2(\Omega)}+ \|e^{\frac{\varphi}{h}}h\nabla u\|^2_{L^2(\Omega)}) + h^3\int_{\p \Omega_{-}}(-\p_\nu \varphi)e^{\frac{2\varphi}{h}}|\p_\nu u|^2dS. 
\end{aligned}
\end{equation}
Here we have also used that for $\tau>1$ sufficiently large but fixed, we get
\begin{align*}
\|e^{\frac{\varphi^2}{2\varepsilon}}e^{\frac{\varphi}{h}}u\|^2_{H^1_{\text{scl}}(\Omega)}&\ge \|e^{\frac{\varphi}{h}}u\|^2_{L^2(\Omega)}+\frac{1}{\tau} \|h\nabla(e^{\frac{\varphi^2}{2\varepsilon}}e^{\frac{\varphi}{h}})u+ e^{\frac{\varphi^2}{2\varepsilon}}e^{\frac{\varphi}{h}}h\nabla u \|^2_{L^2(\Omega)}\\
&\ge \|e^{\frac{\varphi}{h}}u\|^2_{L^2(\Omega)}+ \frac{1}{2\tau}\|e^{\frac{\varphi}{h}}h\nabla u\|^2_{L^2(\Omega)}-\mathcal{O}\bigg(\frac{1}{\tau}\bigg) \|e^{\frac{\varphi}{h}}u\|^2_{L^2(\Omega)}\\
&\ge \frac{1}{C}(\|e^{\frac{\varphi}{h}}u\|^2_{L^2(\Omega)}+ \|e^{\frac{\varphi}{h}}h\nabla u\|^2_{L^2(\Omega)}).
\end{align*}

Replacing $\varphi$ by $-\varphi$ in \eqref{bcf_3}, we get  \eqref{eq_2_11_7}. The proof is complete. 
\end{proof}

\section{Integral identity}
\label{sec_integral identity}
The following result is due to \cite[Theorem 7]{Brown_1996}. Since we need the integral identity with boundary terms, we present the proof for convenience of the reader. 
\begin{lem}
\label{lem_integral_identity}
Let $\Omega\subset \R^n$, $n\ge 3$, be a bounded open set with Lipschitz boundary, and let $\gamma_1,\gamma_2\in W^{1,\infty}(\Omega)$ be such that $\gamma_1,\gamma_2>0$ on $\overline{\Omega}$, and 
$\gamma_1=\gamma_2$ on $\p \Omega$. Let $u_j\in H^1(\Omega)$ satisfy $L_{\gamma_j}u_j=0$ in $\Omega$, $j=1,2$, and let $\tilde u_1\in H^1(\Omega)$ satisfy $L_{\gamma_1}\tilde u_1=0$ in $\Omega$ with $\tilde u_1=u_2$ on $\p \Omega$. Then 
\begin{equation}
\label{eq_integral_identity}
\int_{\Omega} \big[ -\nabla \gamma_1^{1/2} \cdot \nabla (\gamma_2^{1/2}u_1 u_2)+\nabla \gamma_2^{1/2}\cdot \nabla(\gamma_1^{1/2}u_1u_2) \big]dx=\int_{\p \Omega} (\Lambda_{\gamma_1}\tilde u_1 -\Lambda_{\gamma_2}u_2)u_1dS,
\end{equation}
where the integral over $\p\Omega$ is understood in the sense of the dual pairing between $H^{-1/2}(\p \Omega)$ and $H^{1/2}(\p \Omega)$. 
\end{lem}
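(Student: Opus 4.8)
The plan is to derive \eqref{eq_integral_identity} from the weak formulations of the conductivity equations, exactly in the spirit of \cite{Brown_1996}, but keeping track of the boundary contributions. Recall that $u\in H^1(\Omega)$ solves $L_\gamma u=\div(\gamma\nabla u)=0$ in $\Omega$ weakly means that
\[
\int_\Omega \gamma\nabla u\cdot\nabla\phi\,dx=\int_{\p\Omega}(\gamma\p_\nu u)\phi\,dS=\langle\Lambda_\gamma(u|_{\p\Omega}),\phi|_{\p\Omega}\rangle
\]
for all $\phi\in H^1(\Omega)$, where the right-hand side is interpreted through the $H^{-1/2}$--$H^{1/2}$ duality and defines $\Lambda_\gamma$. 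First I would substitute $v=\gamma^{-1/2}u$, so that the conductivity equation becomes the Schr\"odinger-type equation $-\Delta v+m_q(v)=0$ with $q=\Delta\gamma^{1/2}/\gamma^{1/2}$, and $m_q$ as in \eqref{eq_def_m_q_omega}; the point of this change is that the quadratic form associated to $-\Delta+m_q$ is symmetric in a way that makes the two-conductivity comparison clean. Set $v_j=\gamma_j^{-1/2}u_j$ and $\tilde v_1=\gamma_1^{-1/2}\tilde u_1$, with $q_j=\Delta\gamma_j^{1/2}/\gamma_j^{1/2}$.

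Next I would write the bilinear identity obtained by testing the equation for $v_1$ (or $\tilde v_1$) against $\gamma_2^{1/2}\gamma_1^{-1/2}(\gamma_1^{1/2}v_2)=\gamma_2^{1/2}v_2$-type combinations. Concretely, using $L_{\gamma_1}\tilde u_1=0$ tested against $u_1$ is the wrong pairing; instead the standard manipulation is: test $L_{\gamma_1}\tilde u_1=0$ against $u_2$ (legitimate since $\tilde u_1=u_2$ on $\p\Omega$, and actually one tests against the $H^1$ function $u_1$ after a symmetrisation) — more precisely one forms
\[
\int_\Omega \gamma_1\nabla\tilde u_1\cdot\nabla u_1\,dx-\int_\Omega\gamma_2\nabla u_2\cdot\nabla u_1\,dx,
\]
and evaluates it in two ways. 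On one hand, since $\tilde u_1$ and $u_2$ agree on $\p\Omega$ and $\gamma_1=\gamma_2$ on $\p\Omega$, each term equals a boundary pairing, giving $\langle\Lambda_{\gamma_1}\tilde u_1-\Lambda_{\gamma_2}u_2,u_1|_{\p\Omega}\rangle$, which is the right-hand side of \eqref{eq_integral_identity}. On the other hand, one rewrites the left-hand side by passing to $v_j=\gamma_j^{-1/2}u_j$ and expanding $\nabla u_j=\gamma_j^{1/2}\nabla v_j+v_j\nabla\gamma_j^{1/2}$; after cancellation of the symmetric $\int\nabla v_1\cdot\nabla v_2$ terms (using that $v_1$ and $\tilde v_1$ both satisfy the same Schr\"odinger equation and have the same boundary data as $v_2$, so the $m_{q_1}$ contributions match), one is left precisely with the interior integral
\[
\int_\Omega\bigl[-\nabla\gamma_1^{1/2}\cdot\nabla(\gamma_2^{1/2}u_1u_2)+\nabla\gamma_2^{1/2}\cdot\nabla(\gamma_1^{1/2}u_1u_2)\bigr]dx.
\]
Here the replacement of $\tilde u_1$ by $u_1$ in the final interior integral is justified because $\tilde u_1$ and $u_1$ enter only through the symmetric $\nabla v\cdot\nabla v$ piece that drops out, so effectively the surviving terms see $u_1$ via the test function and $u_2$, $\tilde u_1$ (with common boundary data) interchangeably up to terms that cancel.

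The main obstacle, and the place requiring care, is the bookkeeping with the singular potentials $q_j\in H^{-1}$: all the manipulations above must be done at the level of the bilinear forms $\langle m_{q_j}(\cdot),\cdot\rangle$ defined by \eqref{eq_def_m_q_omega}, never with $q_j$ as a function, and one must check that the test functions used (products like $\gamma^{1/2}v$ and $u_1u_2$) lie in the correct spaces $H^1(\Omega)$ with the appropriate boundary behaviour so that the integrations by parts and the $H^{-1/2}$--$H^{1/2}$ duality pairings are all legitimate. Since $\gamma_1,\gamma_2\in W^{1,\infty}(\Omega)$ and $u_j\in H^1(\Omega)$, the products $u_1u_2$ need not be in $H^1(\Omega)$ in general, so one should either first prove the identity for smooth conductivities and smooth solutions and pass to the limit, or argue directly using the explicit form \eqref{eq_def_m_q_omega} of $m_q$ (which involves only first derivatives of $\log\gamma$ paired against $\nabla(uv)$, and $\nabla(u_1u_2)=u_2\nabla u_1+u_1\nabla u_2\in L^1$ with the Lipschitz factor bounded, so the integrals converge). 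I would carry out the computation with the $m_q$ forms directly, organise the cancellation of the symmetric Dirichlet-energy and $m_{q_1}$ terms, and collect the surviving first-order terms into the stated interior integrand, with the boundary terms assembling into $\langle\Lambda_{\gamma_1}\tilde u_1-\Lambda_{\gamma_2}u_2,u_1\rangle$.
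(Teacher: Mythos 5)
Your overall strategy---deriving the identity from the weak formulations of the two conductivity equations, exploiting $\gamma_1=\gamma_2$ and $\tilde u_1=u_2$ on $\p\Omega$, and invoking the symmetry of $\Lambda_{\gamma_1}$ at the end---is the paper's approach, and your closing remark that the interior integrand only needs to be read in $L^1$ (since $\nabla\gamma_j^{1/2}\in L^\infty$ while $\nabla(u_1u_2)\in L^1$, as $u_1u_2$ need not lie in $H^1$) is a point the paper leaves implicit. However, two steps are not right as written. First, the Liouville substitution goes the other way: if $L_\gamma u=0$ then it is $\gamma^{1/2}u$, not $\gamma^{-1/2}u$, that solves the Schr\"odinger equation; the paper accordingly sets $v_j=\gamma_j^{1/2}u_j$. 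Second, and more seriously, your mechanism for eliminating $\tilde u_1$ from the interior integral is garbled: $v_1$ and $\tilde v_1$ do \emph{not} have the same boundary data ($\tilde u_1=u_2$ on $\p\Omega$, but $u_1\neq u_2$ there in general), so the asserted matching of the $m_{q_1}$ contributions fails, and the claim that ``$\tilde u_1$ and $u_1$ enter only through the symmetric piece that drops out'' is left unsubstantiated. The clean route (the paper's) is to test $L_{\gamma_1}u_1=0$ against $\gamma_1^{-1/2}\gamma_2^{1/2}u_2\in H^1(\Omega)$, whose trace equals $u_2=\tilde u_1$ on $\p\Omega$ because $\gamma_1=\gamma_2$ there; expanding the Dirichlet form gives $\int_{\p\Omega}(\Lambda_{\gamma_1}u_1)u_2\,dS=\int_\Omega\bigl[-\nabla\gamma_1^{1/2}\cdot\nabla(\gamma_2^{1/2}u_1u_2)+\nabla(\gamma_1^{1/2}u_1)\cdot\nabla(\gamma_2^{1/2}u_2)\bigr]dx$, the analogous identity for $\gamma_2$ carries the identical symmetric second term, and subtraction kills it; finally the symmetry of $\Lambda_{\gamma_1}$ converts $\int(\Lambda_{\gamma_1}u_1)u_2\,dS$ into $\int(\Lambda_{\gamma_1}\tilde u_1)u_1\,dS$. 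In this way $\tilde u_1$ never appears in any interior integral and no cancellation between $v_1$ and $\tilde v_1$ is required; if you prefer your version, the missing link is the observation that $\int_\Omega\gamma_1\nabla\tilde u_1\cdot\nabla u_1\,dx=\int_\Omega\gamma_1\nabla u_1\cdot\nabla(\gamma_1^{-1/2}\gamma_2^{1/2}u_2)\,dx$, valid because the two test functions differ by an element of $H^1_0(\Omega)$ and $L_{\gamma_1}u_1=0$.
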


\begin{proof}

Set $v_j=\gamma_j^{1/2}u_j$, $j=1,2$. We know that if $u_1\in H^1(\Omega)$ satisfies $L_{\gamma_1}u_1=0$ in $\Omega$, then the trace $\gamma_1\p_\nu u_1\in H^{-1/2}(\p \Omega)$, and we have the following integration by parts formula, 
\begin{align*}
0=\int_{\Omega} (\nabla \cdot (\gamma_1\nabla u_1))(\gamma_1^{-1/2}v_2)dx=&-\int_{\Omega} \gamma_1\nabla u_1\cdot \nabla (\gamma_1^{-1/2}v_2)dx\\
&+\int_{\p \Omega} (\Lambda_{\gamma_1}u_1)u_2dS.
\end{align*}
Here we have used the fact that $\gamma_1=\gamma_2$ on $\p \Omega$. Thus, using that $\nabla \gamma_1^{-1/2}=-\gamma_1^{-1}\nabla \gamma_1^{1/2}$, we get
\begin{equation}
\label{eq_2_10_12}
\begin{aligned}
\int_{\p \Omega} (\Lambda_{\gamma_1}u_1)u_2dS&=\int_{\Omega} \gamma_1\nabla (\gamma_1^{-1/2} v_1)\cdot \nabla (\gamma_1^{-1/2}v_2)dx\\
&=\int_{\Omega} [-\nabla \gamma_1^{1/2}\cdot \nabla (\gamma_1^{-1/2}v_1v_2)+\nabla v_1\cdot\nabla v_2]dx.
\end{aligned}
\end{equation}
Similarly, we have 
\begin{equation}
\label{eq_2_10_13}
\int_{\p \Omega} (\Lambda_{\gamma_2}u_2)u_1dS=\int_{\Omega} [-\nabla \gamma_2^{1/2}\cdot \nabla (\gamma_2^{-1/2}v_1v_2)+\nabla v_1\cdot\nabla v_2]dx.
\end{equation}
Subtracting  \eqref{eq_2_10_13} from \eqref{eq_2_10_12} and using that $\tilde u_1=u_2$ on $\p \Omega$, and that $\Lambda_{\gamma_1}$ is symmetric, we obtain \eqref{eq_integral_identity}. The proof is complete. 
\end{proof}

\section{Proof of Theorem \ref{thm_main}} 
\label{sec_proof}

 Let us observe, as our starting point, that the fact that $\Lambda_{\gamma_1} f|_{\tilde F}=\Lambda_{\gamma_2}f|_{\tilde F}$ for any $f\in H^{1/2}(\p \Omega)$ together with the boundary determination result of \cite{Alessandrini_1990}, see also \cite{Kohn_Vogelius_1984}, and the assumptions of Theorem \ref{thm_main} implies that 
\begin{equation}
\label{eq_Cauchy_data_second_order}
\gamma_1=\gamma_2,  \quad \p_\nu\gamma_1=\p_\nu\gamma_2, \quad \text{on}\quad  \p \Omega.
\end{equation}

\subsection{Complex geometric optics solutions and interior estimates}
\label{subsection_interior}
Let $\Omega\subset \R^n$, $n\ge 3$, be a bounded open set  with $C^2$ boundary.  Let $\gamma_1,\gamma_2$ be such that either 
\begin{itemize}
\item[(i)] $\gamma_1,\gamma_2\in  C^{1,\delta}(\overline{\Omega})\cap H^{\frac{3}{2}}(\Omega)$, 
\end{itemize}
or
\begin{itemize}
\item[(ii)]  $\gamma_1,\gamma_2\in W^{1,\infty}(\Omega)\cap H^{\frac{3}{2}+\delta}(\Omega)$,  
\end{itemize}
where $0<\delta<1/2$ be arbitrarily small but fixed,
and $\gamma_1,\gamma_2>0$ on $\overline{\Omega}$.

As we saw in Subsection \ref{sec_CGO_H_3_2}, we can extend $\gamma_j$ to $\R^n$ so that either 
\begin{itemize}
\item[(i)] $\gamma_1,\gamma_2\in  C^{1,\delta}(\R^n)$,  $\gamma_j-1\in H^{\frac{3}{2}}(\R^n)$, 
\end{itemize}
or
\begin{itemize} 
\item[(ii)] $\gamma_1,\gamma_2\in W^{1,\infty}(\R^n)$, $\gamma_j-1\in H^{\frac{3}{2}+\delta}(\R^n)$, 
\end{itemize}
$\gamma_j>0$ on $\R^n$,  and  $\gamma_j=1$ near infinity.  

Using \eqref{eq_Cauchy_data_second_order},  we shall now modify $\gamma_2$ so that the extensions of $\gamma_1$ and $\gamma_2$  agree on $\R^n\setminus\Omega$, and their regularity is preserved. To that end, let 
\[
v=(\gamma_1-\gamma_2) 1_{\R^n\setminus \overline{\Omega}}\in \mathcal{E}'(\R^n),
\]
where $1_{\R^n\setminus\overline{ \Omega}}$ is the characteristic function of the set $\R^n\setminus \overline{\Omega}$.  In case (ii), using \eqref{eq_Cauchy_data_second_order} and the fact that $\frac{3}{2}+\delta$ is not a half-integer,  we conclude  from \cite[Theorem 5.1.14, Theorem 5.1.15]{Agranovich_book} that $v\in H^{\frac{3}{2}+\delta}(\R^n)$. It is also clear that  $v\in W^{1,\infty}(\R^n)$.

In case (i), the Sobolev index of $\gamma_j -1$ is a half-integer and therefore, we shall have to be a little more careful. We claim that $v\in C^{1,\delta}(\R^n)\cap H^{\frac{3}{2}}(\R^n)$, and when verifying this fact,  it suffices to work locally near a point at $\p \Omega$. Flattening out $\p \Omega$ by means of a $C^2$ diffeomorphism, we shall consider the regularity of $v=(\gamma_1-\gamma_2)1_{\overline{\R^n_-}}$, where $\gamma_1-\gamma_2\in C^{1,\delta}_0(\R^n)\cap H^{\frac{3}{2}}(\R^n)$, $\gamma_1=\gamma_2$, $\p_{x_n}\gamma_1=\p_{x_n}\gamma_2$ along $\{x_n=0\}$. It follows that $v\in C^{1,\delta}_0(\R^n)\cap H^{1}(\R^n)$, and it only remains to check that $\nabla v\in H^{\frac{1}{2}}(\R^n)$. To this end, we notice that by \cite[ formula (3.4.19)]{Agranovich_book}, this property is implied by the convergence of the integral, 
\begin{equation}
\label{eq_condition_Agranovich}
\int\!\!\!\int_{x_n<0} \frac{|\nabla(\gamma_1(x) - \gamma_2(x))|^2}{|x_n|}dx'dx_n<\infty.
\end{equation}
This is clear however, since $\nabla(\gamma_1-\gamma_2)\in C^{0,\delta}_0(\R^n)$ and $\nabla( \gamma_1-\gamma_2)=0$ along $x_n=0$. 

To achieve that $\gamma_1=\gamma_2$ on $\R^n\setminus\overline{\Omega}$ it now suffices to replace $\gamma_2$ by $\gamma_2+v$.

\textbf{Remark.} We would like to mention that we only need that $\nabla \gamma_j\in C^{0,\delta}(\overline{\Omega})$ in order to verify \eqref{eq_condition_Agranovich}, when making the conductivities equal on $\R^n\setminus\overline{\Omega}$, and preserving their regularity.  The rest of the argument in this work  in case (i) requires only $\gamma_j\in C^{1}(\overline{\Omega})\cap H^{\frac{3}{2}}(\Omega)$.

Let $\tilde \Omega\subset \R^n$ be an open set  such that $\Omega \subset\subset \tilde \Omega$, and let  $\varphi\in C^\infty(\tilde \Omega,\R)$ be a limiting Carleman weight for the semiclassical Laplacian on $\tilde \Omega$.  Let  $\hat \Omega\subset \R^n$ be  a bounded open set with $C^\infty$ boundary such that 
$\Omega\subset\subset \hat \Omega\subset\subset  \tilde \Omega$. 
Thanks to Proposition \ref{prop_CGO_solutions}, we know that there exist solutions $u_j\in H^1(\hat \Omega)$ of the equations $L_{\gamma_j}u_j=0$ in $\hat \Omega$ that are of the form 
\begin{equation}
\label{eq_2_10_15}
\begin{aligned}
u_1(x;h)&=\gamma_1^{-1/2}e^{-\frac{(\varphi+i\psi)}{h}}(a_1(x)+r_1(x;h)),\\
u_2(x;h)&=\gamma_2^{-1/2}e^{\frac{\varphi+i\psi}{h}}(a_2(x)+r_2(x;h))
\end{aligned}
\end{equation}
where  $\psi\in C^\infty(\tilde \Omega,\R)$ is a solution to the eikonal equation \eqref{eq_2_eikonal}, $a_j\in C^\infty(\overline{\hat \Omega})$ is a solution of the first transport equation \eqref{eq_2_10_3}, and the remainder term $r_j$ is such that 
\begin{equation}
\label{eq_2_10_16}
\|r_j\|_{L^2(\hat \Omega)}=o(h^{1/2}), \quad \|\nabla r_j\|_{L^2(\hat \Omega)}=o(h^{-1/2}),  \quad h\to 0. 
\end{equation}
Using the general estimate
\[
\| v|_{\p \Omega}\|_{H^{1/2}_{\text{scl}}(\p \Omega)}\le    \mathcal{O}(h^{-1/2})\|v\|_{H^1_{\text{scl}}(\Omega)},\quad v\in H^1(\Omega),
\]
see \cite{Sjostrand_2014},  we get
\begin{equation}
\label{b_terms_2_rem}
\|r_j\|_{L^2(\p \Omega)}=o(1).
\end{equation}

We shall substitute the solutions $u_1$ and $u_2$, given by \eqref{eq_2_10_15},  into the integral identity of Lemma \ref{lem_integral_identity}. Let us first understand the interior integral. 

\begin{lem}
\label{lem_full_data_lem}
Let $u_1$ and $u_2$ be given by \eqref{eq_2_10_15} and \eqref{eq_2_10_16}. Then 
\begin{align*}
\lim_{h\to 0} \int_{\Omega} \big[ -\nabla \gamma_1^{1/2} &\cdot \nabla (\gamma_2^{1/2}u_1 u_2)+\nabla \gamma_2^{1/2}\cdot \nabla(\gamma_1^{1/2}u_1u_2) \big]dx\\
&= \int_{\Omega}\big[- \nabla \gamma_1^{1/2}\cdot \nabla (\gamma_1^{-1/2} a_1a_2)+\nabla \gamma_2^{1/2}\cdot \nabla (\gamma_2^{-1/2}a_1a_2) \big]dx.
\end{align*}
 
\end{lem}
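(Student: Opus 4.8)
The plan is to substitute the complex geometric optics solutions \eqref{eq_2_10_15} into the interior integral and show that the contributions of the remainder terms $r_1, r_2$ vanish in the limit $h\to 0$, leaving only the amplitude terms. The key observation is that the exponential factors cancel: since $u_1$ carries $e^{-(\varphi+i\psi)/h}$ and $u_2$ carries $e^{(\varphi+i\psi)/h}$, the product $u_1 u_2$ equals $\gamma_1^{-1/2}\gamma_2^{-1/2}(a_1+r_1)(a_2+r_2)$ with no surviving exponential. Expanding $(a_1+r_1)(a_2+r_2)=a_1a_2 + a_1 r_2 + a_2 r_1 + r_1 r_2$, the first term produces exactly the claimed limit (note $\gamma_1^{1/2}\cdot\gamma_1^{-1/2}\gamma_2^{-1/2}=\gamma_2^{-1/2}$ and symmetrically), so it remains to show the other three terms contribute $o(1)$.

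First I would expand $\nabla(\gamma_2^{1/2}u_1u_2)$ and $\nabla(\gamma_1^{1/2}u_1u_2)$ using the product rule; since $\gamma_j\in W^{1,\infty}$ the factors $\nabla\gamma_j^{1/2}$, $\nabla(\gamma_1^{1/2}\gamma_1^{-1/2}\gamma_2^{-1/2})$ etc. are all in $L^\infty$, and $a_j\in C^\infty(\overline{\hat\Omega})$. The potentially dangerous terms are those where the gradient falls on $r_j$, producing $\nabla r_j$, which is only $o(h^{-1/2})$ in $L^2(\hat\Omega)$ by \eqref{eq_2_10_16}. However, every such term is paired either with another $r_k$ (which is $o(h^{1/2})$ in $L^2$) or with an $a_k$ that is bounded — but crucially, a term like $\int (\nabla\gamma_1^{1/2})\,\gamma_2^{1/2}\cdot(\text{stuff})\,(\nabla r_1)\,a_2\,dx$ must be handled by integration by parts to move the derivative off $r_1$, since $\|\nabla r_1\|_{L^2}$ alone is too large. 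After integrating by parts, one gets a boundary term over $\p\Omega$ controlled by \eqref{b_terms_2_rem} (giving $o(1)$) plus an interior term with $\|r_1\|_{L^2(\hat\Omega)}=o(h^{1/2})$ times bounded factors — here one uses that $\gamma_j\in W^{1,\infty}$ has a well-defined (a.e.) gradient so the integration by parts is legitimate, though the second derivatives of $\gamma_j$ appearing are only distributions; this is where the $H^{3/2}$ (hence $\nabla\log\gamma\in H^{1/2}$) regularity is needed to make sense of the resulting pairing, or one regularizes $\gamma_j$ via the mollifier \eqref{eq_mollifier_def} exactly as in the construction of the CGO solutions and tracks the error.

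The cross terms $a_1 r_2$ and $a_2 r_1$ that do not involve $\nabla r_j$ (i.e. where the gradient lands on the smooth factors) are directly $o(h^{1/2})=o(1)$ in absolute value by Cauchy--Schwarz, using $\|r_j\|_{L^2(\hat\Omega)}=o(h^{1/2})$ and boundedness of all other factors on $\overline\Omega$. The term $r_1 r_2$ with a gradient on one remainder is bounded by $\|\nabla r_1\|_{L^2}\|r_2\|_{L^2}=o(h^{-1/2})\cdot o(h^{1/2})=o(1)$, and with gradients on smooth factors is even smaller.

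\textbf{Main obstacle.} The main difficulty I anticipate is the terms in which $\nabla r_j$ appears multiplied only by $a_k$ and by gradients of the conductivities: naive estimation gives $o(h^{-1/2})$, which does not vanish. The resolution is an integration by parts to transfer the derivative off $r_j$, but this requires justifying that the resulting expression — which involves either $\Delta\gamma_j$ or products of $\nabla\gamma_j$ — is well-defined and bounded (or at worst $\mathcal{O}(1)$ paired against $\|r_j\|_{L^2}=o(h^{1/2})$). This is precisely the point where the mild extra regularity $\gamma_j\in H^{3/2}$ (so that $A=\nabla\log\gamma_j\in H^{1/2}$, and expressions like $\nabla\gamma^{1/2}\cdot\nabla\gamma^{-1/2}\in L^1$ with controllable structure) is exploited, handled along the lines of the estimate for $hI$ in Subsection \ref{sec_CGO_H_3_2}. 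The boundary contributions generated by these integrations by parts are exactly the reason the estimate \eqref{b_terms_2_rem} was recorded; one checks they are $o(1)$ and then the proof is complete.
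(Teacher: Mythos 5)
Your proposal follows essentially the same route as the paper's proof: cancel the exponentials, isolate the main term $a_1a_2$, estimate the terms with remainders but no $\nabla r_j$ directly, and handle the dangerous terms $S\cdot a_k\nabla r_j$ (with $S=\nabla\log\gamma_1-\nabla\log\gamma_2\in H^{1/2}$) by splitting $S=(S-S_h)+S_h$ via the mollifier, pairing $\|S-S_h\|_{L^2}=o(h^{1/2})$ against $\|\nabla r_j\|_{L^2}=o(h^{-1/2})$ and integrating by parts in the $S_h$ piece, with the resulting boundary term controlled by \eqref{b_terms_2_rem}. The only quantitative detail worth tightening is that after integration by parts the interior factor $\|\nabla\cdot S_h\|_{L^2}$ is not $\mathcal{O}(1)$ but $o(h^{-1/2})$, which still suffices against $\|r_j\|_{L^2}=o(h^{1/2})$.
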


\begin{proof}
Using \eqref{eq_2_10_15}, we get 
\begin{equation}
\label{eq_2_10_17}
\begin{aligned}
\int_{\Omega} &\big[ -\nabla \gamma_1^{1/2} \cdot \nabla (\gamma_2^{1/2}u_1 u_2)+\nabla \gamma_2^{1/2}\cdot \nabla(\gamma_1^{1/2}u_1u_2) \big]dx\\
&=
\int_{\Omega}\big[- \nabla \gamma_1^{1/2}\cdot \nabla (\gamma_1^{-1/2} a_1a_2)+\nabla \gamma_2^{1/2}\cdot \nabla (\gamma_2^{-1/2}a_1a_2) \big]dx+ J_1+J_2,
\end{aligned}
\end{equation}
where
\begin{align*}
J_1&:=-\int_{\Omega}( \nabla \gamma_1^{1/2}\cdot \nabla \gamma_1^{-1/2} -\nabla \gamma_2^{1/2}\cdot \nabla \gamma_2^{-1/2}) (a_1r_2+ a_2r_1+r_1r_2) dx,\\
J_2&:=-\frac{1}{2}\int_{\Omega}(  \nabla \log \gamma_1 -\nabla \log \gamma_2) \cdot \nabla(a_1r_2+ a_2r_1+r_1r_2) dx.
\end{align*}

Let us show that $J_1, J_2\to 0$ as $h\to 0$. First using \eqref{eq_2_10_16}, we have
\begin{align*}
|J_1|&\le \|\nabla \gamma_1^{1/2}\cdot \nabla \gamma_1^{-1/2} -\nabla \gamma_2^{1/2}\cdot \nabla \gamma_2^{-1/2}\|_{L^\infty(\Omega)}\\
&(\|a_1\|_{L^2(\Omega)}\|r_2\|_{L^2(\Omega)}+\|a_2\|_{L^2(\Omega)}\|r_1\|_{L^2(\Omega)}+\|r_1\|_{L^2(\Omega)}\|r_2\|_{L^2(\Omega)})=o(h^{1/2}),
\end{align*}
as $ h\to 0$. 

To see that $J_2\to 0$ as $h\to 0$, we let $S:= \nabla \log \gamma_1 -\nabla \log \gamma_2\in (L^\infty\cap H^{1/2}\cap\mathcal{E}')(\R^n)$, and $S_h=S*\Psi_h$, where $\Psi_h(x)=h^{-n}\Psi(x/h)$, $h>0$, is the usual mollifier with $\Psi\in C^\infty_0(\R^n)$,  $0\le \Psi\le 1$, 
$\int \Psi dx=1$. 
We have 
\begin{align*}
|J_2|&\le \int_{\Omega} |S| |\nabla(r_1r_2)+r_2\nabla a_1 + r_1\nabla a_2|dx +  \int_{ \Omega} |S-S_h| \cdot |a_1\nabla r_2+ a_2\nabla r_1| dx\\
&+\bigg| \int_{ \Omega} S_h \cdot (a_1\nabla r_2+ a_2\nabla r_1) dx \bigg|:=J_{2,1}+J_{2,2}+J_{2,3}.
\end{align*} 

In view of \eqref{eq_2_10_16}, we see that 
\begin{align*}
J_{2,1}\le \|S\|_{L^\infty(\Omega)}(&\| \nabla r_1\|_{L^2(\Omega)}\|r_2\|_{L^2(\Omega)} + \|r_1\|_{L^2(\Omega)}\| \nabla  r_2\|_{L^2(\Omega)}\\
&+\|r_2\|_{L^2(\Omega)}\|\nabla a_1\|_{L^2(\Omega)}+ \|r_1\|_{L^2(\Omega)}\|\nabla a_2\|_{L^2(\Omega)})=o(1), \quad h\to 0.
\end{align*}
Using \eqref{eq_2_10_7} and \eqref{eq_2_10_16}, we get 
\begin{align*}
J_{2,2}\le \|S-S_h\|_{L^2(\Omega)}(\|a_1\|_{L^\infty}\|\nabla r_2\|_{L^2(\Omega)}+ \|a_2\|_{L^\infty}\|\nabla r_1\|_{L^2(\Omega)})=o(1), \quad h\to 0.
\end{align*}
To estimate $J_{2,3}$, it suffices to consider the integral 
\[
\int_{ \Omega} S_h\cdot a_1\nabla r_2dx= -\int_{\Omega}  r_2 \nabla\cdot (a_1S_h)dx +
\int_{\p \Omega}a_1r_2 S_h\cdot \nu dS.
\]
Using
\eqref{eq_2_10_8},  \eqref{eq_2_10_16} and \eqref{b_terms_2_rem}, we get 
\begin{align*}
\bigg| \int_{ \Omega} S_h\cdot a_1\nabla r_2dx\bigg|&\le \|r_2\|_{L^2(\Omega)} (\|a_1\|_{L^\infty} \|\nabla\cdot S_h\|_{L^2(\Omega)} +\|\nabla a_1\|_{L^\infty}\|S_h\|_{L^2(\Omega)})\\
&+\|a_1\|_{L^2(\p \Omega)}\|r_2\|_{L^2(\p \Omega)} \|S_h\|_{L^\infty(\p \Omega)}=o(1), \quad h\to 0.
\end{align*}
This shows that $J_1, J_2\to 0$ as $h\to 0$, and  completes the proof. 
\end{proof}

\begin{rem}
To establish Lemma \ref{lem_full_data_lem} we only need that $\gamma_j\in W^{1,\infty}(\Omega)\cap H^{3/2}(\Omega)$. 
\end{rem}

\subsection{Boundary term}
The purpose of this Subsection is to show that when substituting  the complex geometric optics solutions  $u_j\in H^1(\hat \Omega)$ of the equations $L_{\gamma_j}u_j=0$ in $\hat \Omega$, given by \eqref{eq_2_10_15}, into the integral identity of Lemma \ref{lem_integral_identity}, and letting $h\to 0$,  the boundary integral goes to zero.  To that end, notice that since  $u_j\in H^1(\hat \Omega)$ 
solves the equation
\begin{equation}
\label{eq_boundary_terms_eq}
-\Delta u_j-A_j\cdot \nabla u_j=0\quad \text{in}\quad \hat \Omega,
\end{equation}
with $A_j=\nabla \log \gamma_j\in (L^\infty\cap H^{1/2}\cap\mathcal{E}')(\R^n)$, $j=1,2$,  by interior elliptic regularity,  $u_j\in H^{2}(\Omega)$.

\begin{lem}
\label{lem_boundary_terms_int}
We have 
\begin{equation}
\label{eq_2_11_2}
J_b:=\int_{\p \Omega\setminus\tilde F} (\Lambda_{\gamma_1}\tilde u_1 -\Lambda_{\gamma_2}u_2)u_1dS\to 0, \quad h\to 0,
\end{equation}
where $u_1\in H^2(\Omega)$ and $u_2\in H^2(\Omega)$ are the complex geometric optics solutions, given by \eqref{eq_2_10_15},  to the equations $L_{\gamma_j}u_j=0$ in $\Omega$, $j=1,2$, respectively,  and  $\tilde u_1\in H^2(\Omega)$ is a solution to $L_{\gamma_1}\tilde u_1=0$ in $\Omega$ with $\tilde u_1=u_2$ on $\p \Omega$. 
\end{lem}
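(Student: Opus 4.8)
The plan is to recast $J_b$ as an integral over the \emph{accessible} part of the boundary plus interior contributions, and then kill the boundary piece using the boundary Carleman estimate of Proposition \ref{prop_boundary_Carleman}. The first step is to use the hypothesis $\Lambda_{\gamma_1}f|_{\tilde F}=\Lambda_{\gamma_2}f|_{\tilde F}$: since $\tilde u_1=u_2$ on $\p\Omega$, we have $\Lambda_{\gamma_1}\tilde u_1|_{\tilde F}=\Lambda_{\gamma_2}u_2|_{\tilde F}$, so the integrand of $J_b$ vanishes on $\tilde F$. Consequently $J_b=\int_{\p\Omega}(\Lambda_{\gamma_1}\tilde u_1-\Lambda_{\gamma_2}u_2)u_1\,dS$, i.e.\ we may integrate over all of $\p\Omega$. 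Next I would apply the integral identity of Lemma \ref{lem_integral_identity} \emph{backwards}, writing $J_b=\int_\Omega[\,-\nabla\gamma_1^{1/2}\cdot\nabla(\gamma_2^{1/2}u_1u_2)+\nabla\gamma_2^{1/2}\cdot\nabla(\gamma_1^{1/2}u_1u_2)\,]dx$; but this is exactly the interior integral handled in Lemma \ref{lem_full_data_lem}, which converges to a finite limit, not necessarily $0$. So that route alone is not enough — the point of this lemma is precisely the cancellation \emph{on the boundary}, and the correct strategy is to estimate $J_b$ directly on $\p\Omega\setminus\tilde F$.

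So the real approach is: estimate $\int_{\p\Omega\setminus\tilde F}(\Lambda_{\gamma_1}\tilde u_1-\Lambda_{\gamma_2}u_2)u_1\,dS$ by controlling each factor. Following \cite{KenSjUhl2007}, \cite{DKSU_2007}, the key geometric fact is that $\p\Omega\setminus\tilde F\subset\p\Omega\setminus F(x_0)$, on which $(x-x_0)\cdot\nu(x)>0$, i.e.\ $\p_\nu\varphi>0$ with $\varphi(x)=\tfrac12\log|x-x_0|^2$; thus $\p\Omega\setminus\tilde F$ lies in the "good" part $\p\Omega_+$ where the weight $e^{2\varphi/h}$ (or its reciprocal, depending on sign conventions for $u_1$ vs $u_2$) is decaying on the relevant Carleman side. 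Concretely, $u_1=\gamma_1^{-1/2}e^{-(\varphi+i\psi)/h}(a_1+r_1)$ carries the weight $e^{-\varphi/h}$, and one writes $\Lambda_{\gamma_1}\tilde u_1-\Lambda_{\gamma_2}u_2=\gamma_1\p_\nu(\tilde u_1-u_2)+(\gamma_1-\gamma_2)\p_\nu u_2$; using \eqref{eq_Cauchy_data_second_order} the second term involves $(\gamma_1-\gamma_2)$ and $\p_\nu(\gamma_1-\gamma_2)$ which vanish on $\p\Omega$, so the $\p_\nu u_2$ term is controlled. For the main term one needs a bound on $\|e^{\varphi/h}\p_\nu(\tilde u_1-u_2)\|_{L^2(\p\Omega_+)}$, and this is exactly what the boundary Carleman estimate \eqref{eq_2_11_7} delivers: applying it to $w=\tilde u_1-u_2$, which solves $L_{\gamma_1}w=0$ in $\Omega$ with $w=0$ on $\p\Omega$, or more precisely to the conjugated equation \eqref{eq_cont_rewritten_2}, one bounds the $\p\Omega_+$ boundary term by the interior $L^2$ norm of $w$ weighted by $e^{\varphi/h}$, which in turn is controlled by the CGO remainder estimates \eqref{eq_2_10_16} and the solvability estimate, giving $o(1)$ as $h\to0$.

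Carrying this out requires pasting together several pieces: (a) the observation that $\tilde u_1-u_2$ vanishes on $\p\Omega$ and solves $L_{\gamma_1}(\tilde u_1-u_2)=-L_{\gamma_1}u_2+L_{\gamma_2}u_2=\operatorname{div}((\gamma_1-\gamma_2)\nabla u_2)$, a source supported where $\gamma_1\ne\gamma_2$, hence inside $\Omega$ (after the extension making them equal outside); (b) an energy/Carleman bound on $\tilde u_1-u_2$ in terms of $\|u_2\|$-type quantities, using Proposition \ref{prop_boundary_Carleman} with $A=A_1$, $V=V_h$ from the conjugated equation \eqref{eq_cont_rewritten_2}; (c) the Cauchy–Schwarz pairing on $\p\Omega\setminus\tilde F$, where $e^{-\varphi/h}|u_1|\lesssim|a_1|+|r_1|$ is $\mathcal{O}(1)$ in $L^2(\p\Omega)$ by \eqref{b_terms_2_rem}, while $e^{\varphi/h}|\p_\nu(\tilde u_1-u_2)|$ on the good set $\p\Omega_+\supset\p\Omega\setminus\tilde F$ is $o(1)$ in $L^2$ by the Carleman estimate, and the weights $e^{\pm\varphi/h}$ exactly cancel in the product. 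The main obstacle, and the reason Proposition \ref{prop_boundary_Carleman} had to be extended to non-vanishing functions, is handling the boundary terms in \eqref{eq_2_11_7} of order $h^3\|\nabla_t u\|_{L^2(\p\Omega)}^2$ and $h^2\int|\p_\nu u||u|dS$ on $\p\Omega\setminus\tilde F$ — these must be absorbed or shown to be $o(1)$ after division by the appropriate power of $h$, which is delicate because $\nabla_t(\tilde u_1-u_2)=0$ on $\p\Omega$ helps but the normal derivative terms do not vanish and require the sign condition $\p_\nu\varphi>0$ to control. I expect this bookkeeping of the boundary remainder terms, together with ensuring the source $\operatorname{div}((\gamma_1-\gamma_2)\nabla u_2)$ produces only an $o(1)$ contribution through the solvability estimate (which is where the regularity $H^{3/2}$ resp.\ $H^{3/2+\delta}$ and the sharpened remainder $o(h^{1/2})$ are used), to be the crux of the argument.
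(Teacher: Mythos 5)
Your overall skeleton matches the paper's: reduce $J_b$ to $\int_{\p\Omega\setminus\tilde F}\gamma_1\,\p_\nu(\tilde u_1-u_2)\,u_1\,dS$, observe that $\p\Omega\setminus\tilde F$ sits inside the region where $\p_\nu\varphi\ge\varepsilon>0$, pair by Cauchy--Schwarz so that the exponential weights cancel, and control the resulting $\p\Omega_+$ normal-derivative integral by the boundary Carleman estimate \eqref{eq_2_11_7}, with the $o(h^{1/2})$ remainder estimates and the $H^{3/2}$-type regularity entering through the source term. That much is correct, and your discarding of the first (integral-identity) route is also right.

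The gap is at the crux, in your steps (a)--(b). If you apply the Carleman estimate to $w=\tilde u_1-u_2$ with the un-conjugated equation, the source is $h^2(A_1-A_2)\cdot\nabla u_2$ with $A_j=\nabla\log\gamma_j$; since $\nabla u_2\sim h^{-1}e^{\varphi/h}(\cdots)$, one gets $\|e^{-\varphi/h}(\text{source})\|_{L^2(\Omega)}=\mathcal{O}(h)\|A_1-A_2\|_{L^2(\Omega)}$, and after dividing \eqref{eq_2_11_7} by $h^3$ and taking square roots this yields $|J_b|=\mathcal{O}(h^{-1/2})\|A_1-A_2\|_{L^2(\Omega)}$, which diverges --- exactly the obstruction flagged in the paper's introduction. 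The fix is not merely to cite ``the conjugated equation with $A=A_1$'': one must conjugate each equation by the \emph{mollified} factor $e^{w_{j,h}/2}$, $w_{j,h}=(\log\gamma_j)*\Psi_h$, so that the first-order coefficient becomes $A_{j,h}-A_j$ and the source for the difference involves $(A_{1,h}-A_1)-(A_{2,h}-A_2)$, which is $o(h^{1/2})$ in $L^2$ by Lemma \ref{lem_Zhang_est}; this is where the $H^{1/2}$ regularity of $\nabla\log\gamma_j$ beats the $h^{-1/2}$ loss. The price is that the function fed to Proposition \ref{prop_boundary_Carleman} is then $e^{w_{1,h}/2}\tilde u_1-e^{w_{2,h}/2}u_2$, which does \emph{not} vanish on $\p\Omega$ (only $w_1=w_2$ holds there, not $w_{1,h}=w_{2,h}$) --- that, rather than bookkeeping on $\p\Omega\setminus\tilde F$, is why the estimate had to be extended to non-vanishing functions. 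All the boundary terms of \eqref{eq_2_11_7} must then be shown to be $o(1)$ using $\|w_{1,h}-w_{2,h}\|_{L^2(\p\Omega)}=o(h)$ and $\|\p_\nu(w_{1,h}-w_{2,h})\|_{L^2(\p\Omega)}=o(1)$ (Lemma \ref{lem_app_delta}, which is where the extra $C^{1,\delta}$ resp. $H^{3/2+\delta}$ hypotheses and $\p_\nu\gamma_1=\p_\nu\gamma_2$ enter), together with a trace bound $\|\p_\nu v_2\|_{L^2(\p\Omega)}=o(h^{-1})$ on the amplitude of $u_2$; and the leftover commutator term $\int\gamma_1e^{-w_{1,h}/2}\p_\nu\big((e^{w_{2,h}/2}-e^{w_{1,h}/2})u_2\big)u_1\,dS$ produced by the conjugation must be estimated separately. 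None of this appears in your proposal, and it constitutes the bulk of the actual proof.
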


\begin{proof}
First since the traces $\p_\nu \tilde u_1|_{\p \Omega} \in H^{1/2}(\p \Omega)$ and $\p_\nu u_2|_{\p \Omega} \in H^{1/2}(\p \Omega)$ are well-defined and $\gamma_1=\gamma_2$ on $\p \Omega$, we have
\[
J_b=\int_{\p \Omega\setminus\tilde F} \gamma_1\p_\nu (\tilde u_1 -u_2)u_1dS.
\] 

Rather than working with the equations \eqref{eq_boundary_terms_eq}, we shall consider their conjugated versions. To that end,  let 
\[
w_j=\log \gamma_j, \quad w_{j,h}=w_j*\Psi_h\in C^\infty_0(\R^n), \quad A_{j,h}=\nabla w_{j,h}=A_j*\Psi_h\in C^\infty_0(\R^n),
\]
where $\Psi_h(x)$ is the mollifier given by \eqref{eq_mollifier_def} with a radial function $\Psi$. Consider the conjugated operators 
\[
e^{\frac{w_{j,h}}{2}}\circ (-\Delta -A_j\cdot \nabla) \circ e^{-\frac{w_{j,h}}{2}}=-\Delta + (A_{j,h}-A_j)\cdot \nabla +V_{j,h},
\]
where 
\[
V_{j,h}=\frac{\nabla \cdot A_{j,h}}{2}-\frac{(A_{j,h})^2}{4} +\frac{A_j\cdot A_{j,h}}{2}\in (L^\infty\cap H^{1/2}\cap\mathcal{E}')(\R^n).
\]

We have
\begin{equation}
\label{b_terms_0_eq_1}
(-\Delta + (A_{1,h}-A_1)\cdot \nabla +V_{1,h})(e^{\frac{w_{1,h}}{2}}\tilde u_1) =0 \quad \text{in}\quad \Omega,
\end{equation}
and 
\begin{equation}
\label{b_terms_0_eq_2}
(-\Delta + (A_{2,h}-A_2)\cdot \nabla +V_{2,h})(e^{\frac{w_{2,h}}{2}} u_2) =0 \quad \text{in}\quad \Omega.
\end{equation}

As a consequence of Lemma \ref{lem_Zhang_est}, we see that 
\begin{equation}
\label{eq_est_W_2}
\|A_{j,h}- A_j\|_{L^2(\R^n)}=o(h^{1/2}),
\end{equation}
\begin{equation}
\label{eq_est_W_infty}
\|A_{j,h}- A_j\|_{L^\infty(\R^n)}=\mathcal{O}(1),
\end{equation}
\begin{equation}
\label{eq_est_V_2}
\|V_{j,h}\|_{L^2(\R^n)}=o(h^{-1/2}),
\end{equation}
\begin{equation}
\label{eq_est_V_infty}
\|V_{j,h}\|_{L^\infty(\Omega)}=\mathcal{O}(h^{-1}).
\end{equation}

For future use, we remark that if $\gamma_j\in C^1(\R^n)$, then $w_j\in C^1_0(\R^n)$, and if $\gamma_j\in W^{1,\infty}(\R^n)$ is such that $\gamma_j-1\in H^{\frac{3}{2}+\delta}(\R^n)\cap\mathcal{E}'(\R^n)$, then $w_j\in H^{\frac{3}{2}+\delta}(\R^n)$. To see the latter fact,  we first observe that $w_j\in (W^{1,\infty}\cap \mathcal{E}')(\R^n)\subset L^2(\R^n)$. We also have
\[
\Delta w_j=\div(\gamma_j^{-1}\nabla(\gamma_j-1))\in H^{-\frac{1}{2}+\delta}(\R^n),
\]
since $\gamma_j^{-1}\nabla(\gamma_j-1)\in H^{\frac{1}{2}+\delta}(\R^n)$ as $\delta>0$ is small. Thus, $(1-\Delta) w_j\in H^{-\frac{1}{2}+\delta}(\R^n)$, and by global elliptic regularity, we conclude that $w_j\in H^{\frac{3}{2}+\delta}(\R^n)$. We shall therefore be able to apply Lemma \ref{lem_app_delta} to $w_j$.

Now since $\tilde u_1=u_2$ on $\p \Omega$, we have  
\begin{align*}
\p_\nu (\tilde u_1 &-u_2)=e^{-\frac{w_{1,h}}{2}}\p_\nu (e^{\frac{w_{1,h}}{2}}(\tilde u_1-u_2))\\
&=e^{-\frac{w_{1,h}}{2}}
\p_\nu (e^{\frac{w_{1,h}}{2}}\tilde u_1- e^{\frac{w_{2,h}}{2}}u_2)+ e^{-\frac{w_{1,h}}{2}}
\p_\nu ((e^{\frac{w_{2,h}}{2}}- e^{\frac{w_{1,h}}{2}})u_2)\quad \text{on}\quad \p \Omega. 
\end{align*}
Thus, we get 
\begin{equation}
\label{b_terms_def_J_sum}
J_b=J_{b,1}+J_{b,2},
\end{equation}
where 
\begin{equation}
\label{b_terms_def_J_b_1}
J_{b,1}=\int_{\p \Omega\setminus\tilde F} \gamma_1  e^{-\frac{w_{1,h}}{2}}
\p_\nu (e^{\frac{w_{1,h}}{2}}\tilde u_1- e^{\frac{w_{2,h}}{2}}u_2) u_1dS,
\end{equation}
\begin{equation}
\label{b_terms_def_J_b_2}
J_{b,2}=\int_{\p \Omega\setminus\tilde F} \gamma_1  e^{-\frac{w_{1,h}}{2}}
\p_\nu ((e^{\frac{w_{2,h}}{2}}- e^{\frac{w_{1,h}}{2}})u_2) u_1dS.
\end{equation}

Let us start by estimating $J_{b,1}$. In doing so we shall use the boundary Carleman estimates of Proposition \ref{prop_boundary_Carleman} applied to the operator in \eqref{b_terms_0_eq_1}.  First notice that when $\varphi$ is given by \eqref{eq_varphi}, we have $\p_\nu\varphi(x)=\frac{(x-x_0)\cdot\nu(x)}{|x-x_0|^2}$, and therefore, $\p \Omega_-=F(x_0)$.  By the definition of $F(x_0)$ and $\tilde F$, there exists $\varepsilon>0$ such that 
\begin{equation}
\label{eq_definition_esp}
\p \Omega_{-}=F(x_0)\subset F_{\varepsilon}:=\{x\in \p \Omega:\p_\nu \varphi(x)\le \varepsilon\}\subset \tilde F. 
\end{equation}

Substituting $u_1$ given by \eqref{eq_2_10_15} into $J_{b,1}$, using \eqref{eq_cond_Lipschitz_nabla} and the Cauchy--Schwarz inequality,  we get 
\begin{equation}
\label{b_terms_2}
\begin{aligned}
|J_{b,1}|\le \mathcal{O}(1) \bigg(\int_{\p \Omega\setminus F_{\varepsilon}} \frac{1}{\varepsilon}\varepsilon|\p_\nu (e^{\frac{w_{1,h}}{2}}\tilde u_1- e^{\frac{w_{2,h}}{2}}u_2) |^2 e^{-\frac{2\varphi}{h}}dS\bigg)^{1/2}\|a_1+r_1\|_{L^2(\p \Omega)}.
\end{aligned}
\end{equation}
It follows from \eqref{b_terms_2} and \eqref{b_terms_2_rem} that 
\begin{equation}
\label{b_terms_3_1}
|J_{b,1}|\le \mathcal{O}(1)\bigg(\int_{\p \Omega_+} (\p_\nu\varphi)  e^{-\frac{2\varphi}{h}} |\p_\nu (e^{\frac{w_{1,h}}{2}}\tilde u_1- e^{\frac{w_{2,h}}{2}}u_2) |^2 dS\bigg)^{1/2}.
\end{equation}

Using the boundary Carleman estimate \eqref{eq_2_11_7} for the operator $-h^2\Delta+ h(A_{1,h}-A_1)\cdot h\nabla +h^2V_{1,h}$, we get 
\begin{equation}
\label{b_terms_4}
\begin{aligned}
&\int_{\p \Omega_+} (\p_\nu\varphi)  e^{-\frac{2\varphi}{h}} |\p_\nu (e^{\frac{w_{1,h}}{2}}\tilde u_1- e^{\frac{w_{2,h}}{2}}u_2) |^2 dS\\
&\le \mathcal{O}(h) \| e^{-\varphi/h} (-\Delta + (A_{1,h}-A_1)\cdot \nabla +V_{1,h}) (e^{\frac{w_{1,h}}{2}}\tilde u_1- e^{\frac{w_{2,h}}{2}}u_2)\|^2_{L^2(\Omega)}\\
&+\mathcal{O}(h^{-2})\|e^{-\frac{\varphi}{h}}(e^{\frac{w_{1,h}}{2}}\tilde u_1- e^{\frac{w_{2,h}}{2}}u_2)\|^2_{L^2(\p \Omega)}\\
&+\mathcal{O}(h^{-1})\int_{\p \Omega} e^{-\frac{2\varphi}{h}}|\p_\nu (e^{\frac{w_{1,h}}{2}}\tilde u_1- e^{\frac{w_{2,h}}{2}}u_2)||e^{\frac{w_{1,h}}{2}}\tilde u_1- e^{\frac{w_{2,h}}{2}}u_2|dS \\
&+\mathcal{O}(1)\int_{\p \Omega_{-}}(-\p_\nu \varphi)e^{-\frac{2\varphi}{h}}|\p_\nu (e^{\frac{w_{1,h}}{2}}\tilde u_1- e^{\frac{w_{2,h}}{2}}u_2)|^2dS\\
&+\mathcal{O}(1)\|e^{-\frac{\varphi}{h}}\nabla_t (e^{\frac{w_{1,h}}{2}}\tilde u_1- e^{\frac{w_{2,h}}{2}}u_2)\|^2_{L^2(\p \Omega)} \\
&+\mathcal{O}(1)\int_{\p \Omega} e^{-\frac{2\varphi}{h}}|\nabla_t (e^{\frac{w_{1,h}}{2}}\tilde u_1- e^{\frac{w_{2,h}}{2}}u_2)||\p_\nu (e^{\frac{w_{1,h}}{2}}\tilde u_1- e^{\frac{w_{2,h}}{2}}u_2) |dS.
\end{aligned}
\end{equation}

Let us proceed by estimating each term in the right hand side of \eqref{b_terms_4}. Using \eqref{b_terms_0_eq_1} and \eqref{b_terms_0_eq_2},  we obtain that 
\begin{equation}
\label{b_terms_5}
\begin{aligned}
\mathcal{O}(h) \| e^{-\varphi/h} (-\Delta + (A_{1,h}-A_1)\cdot \nabla +V_{1,h}) (e^{\frac{w_{1,h}}{2}}\tilde u_1- e^{\frac{w_{2,h}}{2}}u_2)\|^2_{L^2(\Omega)}\\
\le
\mathcal{O}(h) \| e^{-\varphi/h} \big((A_{1,h}-A_1)-(A_{2,h}-A_2)\big)\cdot \nabla (e^{\frac{w_{2,h}}{2}}u_2)\|^2_{L^2(\Omega)}\\
+\mathcal{O}(h)\| e^{-\varphi/h} (V_{1,h}-V_{2,h}) (e^{\frac{w_{2,h}}{2}}u_2)\|^2_{L^2(\Omega)}.
\end{aligned}
\end{equation}

Substituting $u_2$ given by \eqref{eq_2_10_15}, and using  \eqref{eq_cond_Lipschitz_nabla}, \eqref{eq_2_10_16}, \eqref{eq_est_V_infty}, and \eqref{eq_est_V_2}, we get 
\begin{equation}
\label{b_terms_6}
\begin{aligned}
&\mathcal{O}(h^{1/2})\| e^{-\varphi/h} (V_{1,h}-V_{2,h}) (e^{\frac{w_{2,h}}{2}}u_2)\|_{L^2(\Omega)}\\
&\le 
\mathcal{O}(h^{1/2})(\|V_{1,h}-V_{2,h}\|_{L^2(\R^n)}\|a_2\|_{L^\infty(\Omega)}+ \|V_{1,h}-V_{2,h}\|_{L^\infty(\R^n)}\|r_2\|_{L^2(\Omega)})=o(1),
\end{aligned}
\end{equation}
as $h\to 0$.

Using \eqref{eq_2_10_15}, \eqref{eq_2_10_16}, \eqref{eq_cond_Lipschitz_nabla}, \eqref{eq_est_W_infty}, we obtain that 
\begin{equation}
\label{b_terms_7}
\begin{aligned}
\mathcal{O}(h^{1/2})& \| e^{-\varphi/h} \big((A_{1,h}-A_1)-(A_{2,h}-A_2)\big)\cdot \nabla (e^{\frac{w_{2,h}}{2}}) u_2\|_{L^2(\Omega)}\\
&\le \mathcal{O}(h^{1/2}) \|A_{1,h}-A_1-A_{2,h}+A_2 \|_{L^\infty(\R^n)} \|\nabla (e^{\frac{w_{2,h}}{2}})\|_{L^\infty(\R^n)}\\
&\|\gamma_2^{-1/2}\|_{L^\infty(\R^n)} \|a_2+r_2\|_{L^2(\Omega)}\le \mathcal{O}(h^{1/2}),
\end{aligned}
\end{equation}
as $h\to 0$. 

Using \eqref{eq_2_10_15}, \eqref{eq_2_10_16}, \eqref{eq_cond_Lipschitz_nabla}, \eqref{eq_est_W_2}, \eqref{eq_est_W_infty}, we get
\begin{equation}
\label{b_terms_8}
\begin{aligned}
\mathcal{O}&(h^{1/2}) \| e^{-\varphi/h} \big((A_{1,h}-A_1)-(A_{2,h}-A_2)\big) e^{\frac{w_{2,h}}{2}}\cdot \nabla u_2\|_{L^2(\Omega)}\\
&\le \mathcal{O}(h^{1/2}) \|  \big((A_{1,h}-A_1)-(A_{2,h}-A_2)\big) e^{\frac{w_{2,h}}{2}} \nabla(\gamma_2^{-1/2}) (a_2+r_2)\|_{L^2(\Omega)}\\
&+\mathcal{O}(h^{1/2}) \|  \big((A_{1,h}-A_1)-(A_{2,h}-A_2)\big) e^{\frac{w_{2,h}}{2}} \gamma_2^{-1/2}\frac{\nabla \varphi+i\nabla \psi}{h} (a_2+r_2)\|_{L^2(\Omega)}\\
&+\mathcal{O}(h^{1/2}) \|  \big((A_{1,h}-A_1)-(A_{2,h}-A_2)\big) e^{\frac{w_{2,h}}{2}} \gamma_2^{-1/2} (\nabla a_2+\nabla r_2)\|_{L^2(\Omega)}\\
&\le \mathcal{O}(h^{-1/2}) \|  A_{1,h}-A_1-(A_{2,h}-A_2)\|_{L^2(\R^n)} \|a_2\|_{L^\infty(\Omega)} \\
&+ \mathcal{O}(h^{-1/2}) \|  A_{1,h}-A_1-(A_{2,h}-A_2)\|_{L^\infty(\R^n)} \|r_2\|_{L^2(\Omega)}\\
&+\mathcal{O}(h^{1/2}) \|  A_{1,h}-A_1-(A_{2,h}-A_2)\|_{L^\infty(\R^n)} \|\nabla a_2 +\nabla r_2\|_{L^2(\Omega)}=o(1),
\end{aligned}
\end{equation}
as $h\to 0$. 

Combining \eqref{b_terms_5}, \eqref{b_terms_6}, \eqref{b_terms_7},  and \eqref{b_terms_8}, we conclude that 
\begin{equation}
\label{b_terms_9}
\mathcal{O}(h^{1/2}) \| e^{-\varphi/h} (-\Delta + (A_{1,h}-A_1)\cdot \nabla +V_{1,h}) (e^{\frac{w_{1,h}}{2}}\tilde u_1- e^{\frac{w_{2,h}}{2}}u_2)\|_{L^2(\Omega)}=o(1),
\end{equation}
as $h\to 0$.

Let us now estimate the second term in the right hand side of \eqref{b_terms_4}.  
Using the equalities  $\tilde u_1=u_2$ on $\p \Omega$, $\gamma_1=\gamma_2$ on $\p \Omega$, 
 \eqref{eq_2_10_15}, and the estimate
\begin{equation}
\label{b_terms_10}
|e^z-e^w|\le |z-w|e^{\max\{\text{Re} z, \text{Re} w\}},\quad z,w\in\C,
\end{equation}
  we get 
\begin{equation}
\label{b_terms_10_0}
\begin{aligned}
\mathcal{O}(h^{-2})&\|e^{-\frac{\varphi}{h}}(e^{\frac{w_{1,h}}{2}}\tilde u_1- e^{\frac{w_{2,h}}{2}}u_2)\|^2_{L^2(\p \Omega)}=\mathcal{O}(h^{-2})\|e^{-\frac{\varphi}{h}}(e^{\frac{w_{1,h}}{2}}- e^{\frac{w_{2,h}}{2}})u_2\|^2_{L^2(\p \Omega)}\\
&\le \mathcal{O}(h^{-2}) \int_{\p \Omega} |w_{1,h}- w_{2,h}|^2(|a_2|^2 +|r_2|^2)dS\\
&\le \mathcal{O}(h^{-2}) \|a_2\|^2_{L^\infty(\p\Omega)}\big(\| w_{1,h}-w_1\|_{L^2(\p \Omega)}^2+ \| w_{2,h}-w_2\|_{L^2(\p \Omega)}^2 \big)\\
&+\mathcal{O}(h^{-2}) (\| w_{1,h}-w_1\|^2_{L^\infty(\R^n)}+\| w_{2,h}-w_2\|^2_{L^\infty(\R^n)})\|r_2\|_{L^2(\p \Omega)}^2=o(1),
\end{aligned}
\end{equation}
as $h\to 0$. 
Here we have also  used \eqref{b_terms_2_rem}, \eqref{eq_cond_Lipschitz}, and \eqref{eq_cond_delta_1}.

Let us now estimate the fourth term in the right hand side of \eqref{b_terms_4}. When doing so, it is convenient to write 
\begin{equation}
\label{b_terms_10_1}
\begin{aligned}
\int_{\p \Omega_{-}}(-\p_\nu \varphi)e^{-\frac{2\varphi}{h}}&|\p_\nu (e^{\frac{w_{1,h}}{2}}\tilde u_1- e^{\frac{w_{2,h}}{2}}u_2)|^2dS\\
&\le \mathcal{O}(1) \int_{F_\varepsilon}e^{-\frac{2\varphi}{h}}|\p_\nu (e^{\frac{w_{1,h}}{2}}\tilde u_1- e^{\frac{w_{2,h}}{2}}u_2)|^2dS
\end{aligned}
\end{equation}
and to estimate the latter integral as we will need it later. Since $\tilde u_1=u_2$ on $\p \Omega$, and $\gamma_1=\gamma_2$ on $\p \Omega$, we have 
\[
\p_\nu \tilde u_1|_{F_\varepsilon}=\p_\nu u_2|_{F_\varepsilon},
\]
and therefore, 
\begin{equation}
\label{b_terms_11}
\p_\nu (e^{\frac{w_{1,h}}{2}}\tilde u_1- e^{\frac{w_{2,h}}{2}}u_2)=u_2\p_\nu (e^{\frac{w_{1,h}}{2}}- e^{\frac{w_{2,h}}{2}})+ (e^{\frac{w_{1,h}}{2}}- e^{\frac{w_{2,h}}{2}})\p_\nu u_2\quad \text{on}\quad F_\varepsilon.
\end{equation}

Hence, using \eqref{b_terms_11} and \eqref{b_terms_10}, we get
\begin{equation}
\label{b_terms_11_2}
\begin{aligned}
\int_{F_\varepsilon}e^{-\frac{2\varphi}{h}}|\p_\nu (e^{\frac{w_{1,h}}{2}}\tilde u_1- e^{\frac{w_{2,h}}{2}}u_2)|^2dS\le 
\mathcal{O}(1) \bigg( \int_{F_\varepsilon}e^{-\frac{2\varphi}{h}}|\p_\nu (e^{\frac{w_{1,h}}{2}}- e^{\frac{w_{2,h}}{2}})|^2|u_2|^2dS\\
+ \int_{F_\varepsilon}e^{-\frac{2\varphi}{h}}|w_{1,h}- w_{2,h}|^2|\p_\nu u_2|^2dS\bigg).
\end{aligned}
\end{equation}

Let us now estimate the first term in the right hand side of \eqref{b_terms_11_2}. Using the fact that   $\gamma_1=\gamma_2$, and $\p_\nu\gamma_1=\p_\nu\gamma_2 $ on $\p \Omega$, we  
have on $\p \Omega$, 
\begin{equation}
\label{b_terms_11_3}
\begin{aligned}
|\p_\nu (e^{\frac{w_{1,h}}{2}}- e^{\frac{w_{2,h}}{2}})|\le e^{\frac{w_{1,h}}{2}} |\p_\nu w_{1,h}-\p_\nu w_{2,h}|+\mathcal{O}(1)|w_{1,h}-w_{2,h}| |\p_\nu w_{2,h}|\\
\le \mathcal{O}(1) ( |\p_\nu w_{1,h}-\p_\nu w_1 | +|\p_\nu w_{2,h}-\p_\nu w_2| +|w_{1,h}-w_{1}| + |w_{2,h}-w_{2}| ).
\end{aligned}
\end{equation}
Therefore, by \eqref{eq_cond_delta_1}, \eqref{eq_cond_delta_2} and  \eqref{eq_cond_Lipschitz_nabla}, we get  from the second line in \eqref{b_terms_11_3}, 
\begin{equation}
\label{b_terms_12}
\|\p_\nu (e^{\frac{w_{1,h}}{2}}- e^{\frac{w_{2,h}}{2}})\|_{L^2(\p \Omega)}=o(1),
\end{equation}
and from the first line in \eqref{b_terms_11_3},
\begin{equation}
\label{b_terms_13}
\|\p_\nu (e^{\frac{w_{1,h}}{2}}- e^{\frac{w_{2,h}}{2}})\|_{L^\infty(\p \Omega)}=O(1),
\end{equation}
as $h\to 0$. 

Hence, using \eqref{b_terms_2_rem}, \eqref{b_terms_12} and \eqref{b_terms_13}, we obtain that 
\begin{equation}
\label{b_terms_13_1}
\begin{aligned}
  &\int_{F_\varepsilon}e^{-\frac{2\varphi}{h}}|\p_\nu (e^{\frac{w_{1,h}}{2}}- e^{\frac{w_{2,h}}{2}})|^2|u_2|^2dS \le  2 \int_{\p\Omega}|\p_\nu (e^{\frac{w_{1,h}}{2}}- e^{\frac{w_{2,h}}{2}})|^2(|a_2|^2+|r_2|^2)dS\\
 & \le 2 \|\p_\nu (e^{\frac{w_{1,h}}{2}}- e^{\frac{w_{2,h}}{2}})\|_{L^2(\p \Omega)}^2   \|a_2\|^2_{L^\infty(\p \Omega)}  + 2 \|\p_\nu (e^{\frac{w_{1,h}}{2}}- e^{\frac{w_{2,h}}{2}})\|_{L^\infty(\p \Omega)}^2  \|r_2\|^2_{L^2(\p \Omega)}\\
 &=o(1), \quad h\to 0.
\end{aligned}
\end{equation}

In order to estimate the second term in the right hand side of \eqref{b_terms_11_2}, we write 
\begin{equation}
\label{b_terms_13_1_1}
u_2=e^{\frac{\varphi+i\psi}{h}}v_2, \quad v_2=\gamma_2^{-1/2}(a_2+r_2)\in H^{2}(\Omega),
\end{equation}
and since we do not have an estimate for $\p_\nu r_2|_{\p \Omega}$, we shall proceed as follows. 
First,
\begin{equation}
\label{b_terms_13_2}
|\p_\nu u_2|^2= \bigg|e^{\frac{\varphi+i\psi}{h}}\bigg( \frac{\p_\nu \varphi+i\p_\nu \psi}{h} v_2 + \p_\nu v_2\bigg)\bigg|^2\le \mathcal{O}(1)e^{\frac{2\varphi}{h}}\bigg(\frac{1}{h^2}|v_2|^2+|\p_\nu v_2|^2\bigg)\ \text{on}\ \p \Omega.
\end{equation}
Thus, 
\begin{equation}
\label{b_terms_14}
\begin{aligned}
\int_{F_\varepsilon}e^{-\frac{2\varphi}{h}}|w_{1,h}- w_{2,h}|^2|\p_\nu u_2|^2dS\le &\mathcal{O}(h^{-2}) \int_{\p \Omega}|w_{1,h}- w_{2,h}|^2|v_2|^2dS\\
&+ \mathcal{O}(1)\int_{\p \Omega}|w_{1,h}- w_{2,h}|^2|\p_\nu v_2|^2dS. 
\end{aligned}
\end{equation}

For the first term in the right hand side of \eqref{b_terms_14}, using that $\gamma_1=\gamma_2$ on $\p \Omega$ and \eqref{b_terms_2_rem}, \eqref {eq_cond_delta_1} and \eqref{eq_cond_Lipschitz}, we get 
\begin{equation}
\label{b_terms_15}
\begin{aligned}
\mathcal{O}(h^{-2}) &\int_{\p \Omega}|w_{1,h}- w_{2,h}|^2|v_2|^2dS\\
&\le 
\mathcal{O}(h^{-2}) \int_{\p \Omega}(|w_{1,h} -w_1|^2+|w_{2,h}-w_2|^2 ) (|a_2|^2+|r_2|^2)dS\\
&\le \mathcal{O}(h^{-2})(\|w_{1,h} -w_1\|^2_{L^2(\p \Omega)}+\|w_{2,h}-w_2\|^2_{L^2(\p\Omega)})\|a_2\|_{L^\infty(\p \Omega)}^2\\
&+\mathcal{O}(h^{-2})(\|w_{1,h} -w_1\|^2_{L^\infty(\p \Omega)}+\|w_{2,h}-w_2\|^2_{L^\infty(\p\Omega)})\|r_2\|_{L^2(\p \Omega)}^2=o(1),
\end{aligned}
\end{equation}
as $h\to 0$.  

In order to estimate the second term in the right hand side of \eqref{b_terms_14},  we shall use the following result of \cite[Lemma 2.2]{Zhang_Guo_2012}: let $u\in H^1(\Omega)$, then 
\[
\|u\|^2_{L^2(\p \Omega)}\le C(\|u\|_{L^2(\Omega)}\|\nabla u \|_{L^2(\Omega)}+\|u\|^2_{L^2(\Omega)}),
\]
where the constant $C>0$ depends only on $\Omega$ and $n$. Using this estimate together with  the interior elliptic regularity for the Laplacian, we get 
\begin{equation}
\label{b_terms_15_1}
\|\nabla v_2\|_{L^2(\p \Omega)} \le C (\|\nabla v_2\|_{L^2( \Omega)}\|\Delta v_2\|_{L^2(\hat \Omega)}+\|\nabla v_2\|_{L^2( \hat \Omega)}^2)^{1/2}.
\end{equation}
It follows from \eqref{eq_2_10_16} that 
\begin{equation}
\label{b_terms_16}
\|\nabla v_2\|_{L^2(\hat \Omega)}=o(h^{-1/2}), \quad h\to 0.
\end{equation}
Since $u_2$ solves the equation $-\Delta u_2-\nabla\log \gamma_2\cdot \nabla u_2=0$ in $\hat \Omega$, $v_2$ solves the equation
\begin{equation}
\label{b_terms_17}
-\Delta v_2=\bigg(2\frac{\nabla \varphi+i\nabla\psi}{h}+\nabla\log\gamma_2\bigg)\cdot \nabla v_2+
\bigg(\frac{\Delta \varphi+i\Delta\varphi}{h}+\nabla \log \gamma_2 \cdot\frac{\nabla\varphi+i\nabla \varphi}{h}\bigg)v_2
\end{equation}
in $\hat \Omega$, thanks to \eqref{eq_2_eikonal}. Hence, using that $\|v_2\|_{L^2(\hat \Omega)}=\mathcal{O}(1)$, \eqref{b_terms_16}, and \eqref{b_terms_17}, we get 
\begin{equation}
\label{b_terms_18_old}
\|\Delta v_2\|_{L^2(\hat \Omega)}=o(h^{-3/2}), \quad h\to 0. 
\end{equation}

It follows from \eqref{b_terms_15_1}, \eqref{b_terms_16} and \eqref{b_terms_18_old}
\begin{equation}
\label{b_terms_18}
\|\nabla v_2\|_{L^2(\p\Omega)}=o(h^{-1}), \quad h\to 0. 
\end{equation}

For the second term in the right hand side of \eqref{b_terms_14}, using \eqref{b_terms_18} and \eqref{eq_cond_Lipschitz}, we get 
\begin{equation}
\label{b_terms_19}
\begin{aligned}
\int_{\p \Omega}|w_{1,h}- w_{2,h}|^2|\p_\nu v_2|^2dS \le 2(\|w_{1,h}-w_1\|_{L^\infty(\p \Omega)}^2 &+\|w_{2,h}-w_2\|_{L^\infty(\p \Omega)}^2)\\
&\|\p_\nu v_2\|^2_{L^2(\p \Omega)}=o(1),\quad h\to 0.
\end{aligned}
\end{equation}

Thus, we conclude from \eqref{b_terms_14}, \eqref{b_terms_15} and \eqref{b_terms_19} that 
\begin{equation}
\label{b_terms_20}
\int_{F_\varepsilon}e^{-\frac{2\varphi}{h}}|w_{1,h}- w_{2,h}|^2|\p_\nu u_2|^2dS=o(1),\quad h\to 0.
\end{equation}
It follows from \eqref{b_terms_11_2}, \eqref{b_terms_13_1} and \eqref{b_terms_20} that 
\begin{equation}
\label{b_terms_21}
\int_{F_\varepsilon}e^{-\frac{2\varphi}{h}}|\p_\nu (e^{\frac{w_{1,h}}{2}}\tilde u_1- e^{\frac{w_{2,h}}{2}}u_2)|^2dS=o(1),\quad h\to 0,
\end{equation}
and therefore, in view of \eqref{b_terms_10_1}
\begin{equation}
\label{b_terms_22}
\int_{\p \Omega_{-}}(-\p_\nu \varphi)e^{-\frac{2\varphi}{h}}|\p_\nu (e^{\frac{w_{1,h}}{2}}\tilde u_1- e^{\frac{w_{2,h}}{2}}u_2)|^2dS=o(1),\quad h\to 0.
\end{equation}

Let us now estimate the fifth term in the right hand side of \eqref{b_terms_4}. First as $\tilde u_1=u_2$ on $\p \Omega$, we have
\begin{equation}
\label{b_terms_22_1}
\nabla_t (e^{\frac{w_{1,h}}{2}}\tilde u_1- e^{\frac{w_{2,h}}{2}}u_2)=u_2\nabla_t(e^{\frac{w_{1,h}}{2}}- e^{\frac{w_{2,h}}{2}})+(e^{\frac{w_{1,h}}{2}}- e^{\frac{w_{2,h}}{2}})\nabla_t u_2. 
\end{equation}
Similarly to \eqref{b_terms_11_3}, \eqref{b_terms_12} and \eqref{b_terms_13}, we get
\begin{equation}
\label{b_terms_23}
\|\nabla_t (e^{\frac{w_{1,h}}{2}}- e^{\frac{w_{2,h}}{2}})\|_{L^2(\p \Omega)}=o(1),
\end{equation}
\begin{equation}
\label{b_terms_24}
\|\nabla_t (e^{\frac{w_{1,h}}{2}}- e^{\frac{w_{2,h}}{2}})\|_{L^\infty(\p \Omega)}=\mathcal{O}(1),
\end{equation}
as $h\to 0$. Using \eqref{b_terms_22_1} together with \eqref{b_terms_13_2}, we get 
\begin{equation}
\label{b_terms_25}
\begin{aligned}
\|e^{-\frac{\varphi}{h}}\nabla_t &(e^{\frac{w_{1,h}}{2}}\tilde u_1- e^{\frac{w_{2,h}}{2}}u_2)\|^2_{L^2(\p \Omega)}\\
&\le \mathcal{O}(1) \int_{\p \Omega} |\nabla_t(e^{\frac{w_{1,h}}{2}}- e^{\frac{w_{2,h}}{2}})|^2(|a_2|^2+|r_2|^2)dS\\
&+\mathcal{O}(h^{-2})\int_{\p\Omega}|w_{1,h}-w_{2,h}|^2|v_2|^2dS+\mathcal{O}(1)\int_{\p \Omega}|w_{1,h}-w_{2,h}|^2|\nabla_t v_2|^2dS\\
&=o(1), \quad h\to 0, 
\end{aligned}
\end{equation}
where the latter estimate is established as in \eqref{b_terms_13_1},  \eqref{b_terms_15},  \eqref{b_terms_19} with the help of \eqref{b_terms_18}.

Let us now estimate the third term in the right hand side of \eqref{b_terms_4}. Letting 
\[
v=e^{\frac{w_{1,h}}{2}}\tilde u_1- e^{\frac{w_{2,h}}{2}}u_2,
\]
recalling the fixed positive number $\varepsilon$ defined in \eqref{eq_definition_esp}, and 
using  the Cauchy--Schwarz  and  Peter--Paul inequalities,  we get   
\begin{equation}
\label{b_terms_26}
\begin{aligned}
\mathcal{O}&(h^{-1})\int_{\p \Omega} e^{-\frac{2\varphi}{h}}|\p_\nu (e^{\frac{w_{1,h}}{2}}\tilde u_1- e^{\frac{w_{2,h}}{2}}u_2)||e^{\frac{w_{1,h}}{2}}\tilde u_1- e^{\frac{w_{2,h}}{2}}u_2|dS\\
&\le \mathcal{O}(h^{-1})\|e^{-\frac{\varphi}{h}}\p_\nu v\|_{L^2(\p \Omega)}\|e^{-\frac{\varphi}{h}} v\|_{L^2(\p \Omega)}\\
&\le 
\frac{\varepsilon}{4}\|e^{-\frac{\varphi}{h}}\p_\nu v\|_{L^2(\p \Omega)}^2+\mathcal{O}( h^{-2})\|e^{-\frac{\varphi}{h}} v\|_{L^2(\p \Omega)}^2 \le   \mathcal{O}(h^{-2})\|e^{-\frac{\varphi}{h}} v\|_{L^2(\p \Omega)}^2\\
&+\frac{\varepsilon}{4}\int_{F_\varepsilon}e^{-\frac{2\varphi}{h}}|\p_\nu v|^2 dS +
\frac{1}{4}\int_{\p \Omega\setminus F_\varepsilon}(\p_\nu \varphi)e^{-\frac{2\varphi}{h}}|\p_\nu v|^2 dS\\
&\le o(1)+\frac{1}{4} \int_{\p \Omega_{+}}(\p_\nu \varphi)e^{-\frac{2\varphi}{h}}|\p_\nu v|^2 dS,
\end{aligned}
\end{equation}
as $h\to 0$. Here we have also used \eqref{b_terms_10_0} and \eqref{b_terms_21}.

To estimate the final sixth term in the right hand side of \eqref{b_terms_4}, we proceed similarly to \eqref{b_terms_26} and obtain that 
\begin{equation}
\label{b_terms_27}
\begin{aligned}
\int_{\p \Omega} e^{-\frac{2\varphi}{h}}|\nabla_t (e^{\frac{w_{1,h}}{2}}\tilde u_1&- e^{\frac{w_{2,h}}{2}}u_2)||\p_\nu (e^{\frac{w_{1,h}}{2}}\tilde u_1- e^{\frac{w_{2,h}}{2}}u_2) |dS\\
&\le 
\frac{\varepsilon}{4} \|e^{-\frac{\varphi}{h}}\p_\nu v\|^2_{L^2(\p \Omega)}+
\mathcal{O}(1) \|e^{-\frac{\varphi}{h}}\nabla_t v\|^2_{L^2(\p \Omega)}\\
&\le o(1)+\frac{1}{4} \int_{\p \Omega_{+}}(\p_\nu \varphi)e^{-\frac{2\varphi}{h}}|\p_\nu v|^2 dS,
\end{aligned}
\end{equation}
as $h\to 0$. Here we have also used \eqref{b_terms_25} and \eqref{b_terms_21}.

Combining \eqref{b_terms_4}, \eqref{b_terms_9},  \eqref{b_terms_10_0}, \eqref{b_terms_22}, \eqref{b_terms_25}, \eqref{b_terms_26}, and \eqref{b_terms_27}, we get
\[
\int_{\p \Omega_+} (\p_\nu\varphi)  e^{-\frac{2\varphi}{h}} |\p_\nu (e^{\frac{w_{1,h}}{2}}\tilde u_1- e^{\frac{w_{2,h}}{2}}u_2) |^2 dS=o(1), \quad h\to 0.
\]
Hence, in view of \eqref{b_terms_3_1}, 
\begin{equation}
\label{b_terms_28}
|J_{b,1}|=o(1), \quad h\to 0, 
\end{equation}
where $J_{b,1}$ is given by \eqref{b_terms_def_J_b_1}. 

Let us finally show that 
\begin{equation}
\label{b_terms_28_1}
|J_{b,2}|=o(1), \quad h\to 0,
\end{equation}
 where $J_{b,2}$ is defined by \eqref{b_terms_def_J_b_2}.  We have
\begin{equation}
\label{b_terms_29}
|J_{b,2}|\le \mathcal{O}(1) \int_{\p \Omega} |\p_\nu (e^{\frac{w_{2,h}}{2}}- e^{\frac{w_{1,h}}{2}})| |u_2| |u_1|dS+  \mathcal{O}(1)
 \int_{\p \Omega} |e^{\frac{w_{2,h}}{2}}- e^{\frac{w_{1,h}}{2}}| |\p_\nu u_2| |u_1|dS.
\end{equation}

For the first term in the right hand side of \eqref{b_terms_29}, using \eqref{b_terms_12}, \eqref{b_terms_13}, and \eqref{b_terms_2_rem}, we get 
\begin{equation}
\label{b_terms_30}
\begin{aligned}
 \int_{\p \Omega} |\p_\nu (e^{\frac{w_{2,h}}{2}}-& e^{\frac{w_{1,h}}{2}})| |u_2| |u_1|dS\le 
 \mathcal{O}(1)\|\p_\nu (e^{\frac{w_{2,h}}{2}}- e^{\frac{w_{1,h}}{2}})\|_{L^2(\p \Omega)}\|a_1a_2\|_{L^\infty(\p \Omega)}\\
 &+ \mathcal{O}(1)\|\p_\nu (e^{\frac{w_{2,h}}{2}}- e^{\frac{w_{1,h}}{2}})\|_{L^\infty(\p \Omega)}\big(\|a_1\|_{L^\infty(\p \Omega)}\|r_2\|_{L^2(\p \Omega)}\\
 &+ \|a_2\|_{L^\infty(\p \Omega)}\|r_1\|_{L^2(\p \Omega)}+ \|r_1\|_{L^2(\p \Omega)}\|r_2\|_{L^2(\p \Omega)}\big)=o(1), \quad h\to 0.
\end{aligned}
\end{equation}

To estimate the second term in the right hand side of \eqref{b_terms_29}, using 
\eqref{b_terms_13_1_1}, we see that 
\begin{equation}
\label{b_terms_31}
\begin{aligned}
 \int_{\p \Omega} |e^{\frac{w_{2,h}}{2}}- e^{\frac{w_{1,h}}{2}}| |\p_\nu u_2| |u_1|dS\le & \mathcal{O}(h^{-1})\int_{\p \Omega} |w_{2,h}-w_{1,h}| |v_2| |a_1+r_1|dS\\
 &+\mathcal{O}(1) \int_{\p \Omega} |w_{2,h}-w_{1,h}| |\p_\nu v_2| |a_1+r_1|dS.
\end{aligned}
\end{equation}
Using \eqref{eq_cond_Lipschitz}, \eqref{b_terms_18} and \eqref{b_terms_2_rem}, we get
\begin{equation}
\label{b_terms_32}
\begin{aligned}
 \int_{\p \Omega} |w_{2,h}-w_{1,h}| &|\p_\nu v_2| |a_1+r_1|dS\\
& \le \mathcal{O}(1)(\|w_{2,h}-w_{2}\|_{L^\infty(\p \Omega)}+ \|w_{1,h}-w_{1}\|_{L^\infty(\p \Omega)} )\|\p_\nu v_2\|_{L^2(\p \Omega)}\\
 &(\|a_1\|_{L^\infty(\p \Omega)}+\|r_1\|_{L^2(\p\Omega)})=o(1), \quad h\to 0.
\end{aligned}
\end{equation}

Using \eqref{eq_cond_Lipschitz} \eqref{eq_cond_delta_1}, and \eqref{b_terms_2_rem}, we obtain that 
\begin{equation}
\label{b_terms_33_old}
\begin{aligned}
\mathcal{O}&(h^{-1})\int_{\p \Omega} |w_{2,h}-w_{1,h}| |v_2| |a_1+r_1|dS\le \mathcal{O}(h^{-1}) \| w_{2,h}-w_{1,h}\|_{L^2(\p \Omega)}\|a_1a_2\|_{L^\infty(\p \Omega)}\\
&+ \mathcal{O}(h^{-1}) \| w_{2,h}-w_{1,h}\|_{L^\infty(\p \Omega)}( \|a_1\|_{L^\infty(\p \Omega)}\|r_2\|_{L^2(\p \Omega)} + \|a_2\|_{L^\infty(\p \Omega)}\|r_1\|_{L^2(\p \Omega)}\\
 &+ \|r_1\|_{L^2(\p \Omega)}\|r_2\|_{L^2(\p \Omega)} )=o(1), \quad h\to 0.
\end{aligned}
\end{equation}

It follows from \eqref{b_terms_31}, \eqref{b_terms_32} and \eqref{b_terms_33_old} that 
\begin{equation}
\label{b_terms_33}
 \int_{\p \Omega} |e^{\frac{w_{2,h}}{2}}- e^{\frac{w_{1,h}}{2}}| |\p_\nu u_2| |u_1|dS=o(1), \quad h\to 0.
\end{equation}

We conclude from \eqref{b_terms_29}, \eqref{b_terms_30} and \eqref{b_terms_33} that \eqref{b_terms_28_1} holds. In view of \eqref{b_terms_28} and \eqref{b_terms_def_J_sum}, we have therefore established \eqref{eq_2_11_2}.  The proof is complete.
\end{proof}

\subsection{Recovery of conductivity}

We conclude from Lemma \ref{lem_integral_identity}, Lemma \ref{lem_full_data_lem}, and Lemma \ref{lem_boundary_terms_int} that 
\begin{equation}
\label{eq_recovery_1}
\int_{\Omega}\big[- \nabla \gamma_1^{1/2}\cdot \nabla (\gamma_1^{-1/2} a_1a_2)+\nabla \gamma_2^{1/2}\cdot \nabla (\gamma_2^{-1/2}a_1a_2) \big]dx=0,
\end{equation}
for any $a_j\in C^\infty(\overline{\hat \Omega})$ such that 
\begin{equation}
\label{eq_recovery_2}
(\nabla \varphi+i\nabla \psi)\cdot \nabla a_j+\frac{1}{2}(\Delta \varphi+i\Delta \psi)a_j=0\quad \text{in}\quad \hat \Omega, 
\end{equation}
$j=1,2$.  Recall that 
\[
q_j=- \nabla \gamma_j^{1/2}\cdot \nabla \gamma_j^{-1/2} +\frac{1}{2}\Delta \log\gamma_j\in H^{-1}(\R^n)\cap \mathcal{E}'(\R^n).
\]
Letting $q=q_1-q_2$, and using the fact that $\gamma_1=\gamma_2$ on $\R^n\setminus\overline{\Omega}$, we conclude that $\supp(q)\subset \overline{\Omega}$.   

Let $\chi\in C_0^\infty(\hat \Omega)$ and $\chi=1$ near $\overline{\Omega}$. When $\phi\in C^\infty(\hat \Omega)$, we have
\begin{align*}
q(\phi)=&q(\chi \phi)=\int_\Omega (-\nabla \gamma_1^{1/2}\cdot \nabla \gamma_1^{-1/2}+\nabla \gamma_2^{1/2}\cdot \nabla \gamma_2^{-1/2})\phi dx\\
&-\frac{1}{2} \int_\Omega (\nabla \log \gamma_1-\nabla \log\gamma_2)\cdot \nabla \phi dx, 
\end{align*}
and therefore,  \eqref{eq_recovery_1} implies that 
\begin{equation}
\label{eq_recovery_3}
q(a_1a_2)=0.
\end{equation}

Recall that the functions $\varphi$ and $\psi$ in \eqref{eq_recovery_2} are defined by \eqref{eq_varphi} and \eqref{eq_psi}, respectively, using the fixed point $x_0\in \R^n\setminus\overline{\text{ch}(\Omega)}$. We shall denote them by $\varphi=\varphi_{x_0}$ and $\psi=\psi_{x_0}$ to emphasize the dependence on $x_0$. 

Now by the assumptions of Theorem \ref{thm_main},  
\[
\Lambda_{\gamma_1}f|_{\tilde F}=\Lambda_{\gamma_2}f|_{\tilde F}, \quad f\in H^{\frac{1}{2}}(\p \Omega), 
\] 
where $\tilde F$ is an open neighborhood of the front face $F(x_0)$, defined in \eqref{eq_int_front}. 
Therefore, there exists a neighborhood of $x_0$, $\text{neigh}(x_0,\R^n)$, such that for any $\tilde x_0\in \text{neigh}(x_0,\R^n)$, we have $F(\tilde x_0)\subset \tilde  F$. Here $F(\tilde x_0)$ is the front face with respect to $\tilde x_0$.  

Associated to $\tilde x_0$, we have the functions $\varphi_{\tilde x_0}$ and $\psi_{\tilde x_0}$, defined as in \eqref{eq_varphi} and \eqref{eq_psi}, and we have  
\[
\varphi_{\tilde x_0}(x)=\varphi_{x_0}(x-y), \quad \psi_{\tilde x_0}(x)=\psi_{x_0}(x-y),
\]
where $y=\tilde x_0-x_0\in \text{neigh}(0,\R^n)$. Observe that  $\varphi_{\tilde x_0},\psi_{\tilde x_0}\in C^\infty(\overline{\hat \Omega})$ for all $\tilde x_0\in \text{neigh}(x_0, \R^n)$. 

The analog of \eqref{eq_recovery_2}  with $\varphi_{x_0}$, $\psi_{x_0}$ replaced by $\varphi_{\tilde x_0}$, $\psi_{\tilde x_0}$ is solved by $a_j(\cdot-y)$, and thus, \eqref{eq_recovery_3} is valid for the translated distribution,  
\begin{equation}
\label{eq_recovery_4}
q(a_1(\cdot-y)a_2(\cdot-y))=0,
\end{equation}
for all $y\in  \text{neigh}(0, \R^n)$. 

Now let $\Psi_\tau$ be the usual mollifier, defined by \eqref{eq_mollifier_def} with a radial function $\Psi$. Then $q*\Psi_\tau\in C^\infty_0(\R^n)$, and for $\tau$ small, we have 
\[
\supp(q*\Psi_\tau)\subset \subset \{\chi=1\}^0\subset \subset \hat \Omega.
\]
It follows from 
\cite[Theorem 4.1.4]{Hormander_book_1} that for $\tau$ small,
\[
(q*\Psi_\tau)(a_1a_2)= (q*\Psi_\tau)(\chi a_1a_2)=q((\chi a_1a_2)* \Psi_\tau).
\]

Using that 
\[
(\chi a_1a_2)* \Psi_\tau=\lim_{\eta\to 0} \eta^n\sum_{k\in \Z^n}(\chi a_1a_2)(x-k\eta) \Psi_\tau(k\eta),
\]
where the convergence is uniform with all derivatives as $\eta\to 0$, see \cite[Lemma 4.1.3]{Hormander_book_1}, together with  \eqref{eq_recovery_4}, we get 
\[
q((\chi a_1a_2)* \Psi_\tau)=\lim_{\eta\to 0} \eta^n\sum_{k\in \Z^n} q((\chi a_1a_2)(\cdot -k\eta)) \Psi_\tau(k\eta)=0.
\]
Therefore, 
\[
(q*\Psi_\tau)(a_1a_2)=0,
\]
for all $\tau>0$ small.  Here $q*\Psi_\tau$ is smooth, and thus, we can apply the analysis of \cite[Section 6]{DKSU_2007} exactly as it stands, which allows us to conclude that $q*\Psi_\tau=0$.   Letting $\tau\to 0$, we get $q=0$, since $q*\Psi_\tau\to q$ in $\mathcal{E}'(\R^n)$. 

Finally,  by  \cite[Lemma 5.2]{Hab_Tataru}, we conclude that $\gamma_1=\gamma_2$ in $\R^n$. The proof of Theorem \ref{thm_main} is complete.

\begin{appendix}

\section{Approximation estimates}

\label{app_estimates}

The purpose of this appendix is to collect some approximation results which are used repeatedly in the main part of the paper. The estimates are well known and are given here for the convenience of the reader, see  \cite{Zhang_Guo_2012}.

In what follows, let $\Psi_\tau(x)=\tau^{-n}\Psi(x/\tau)$, $\tau>0$, be the usual mollifier with $\Psi\in C^\infty_0(\R^n)$, $0\le \Psi\le 1$, and 
$\int \Psi dx=1$.

\begin{lem}[{\cite[Lemma 2.1]{Zhang_Guo_2012}}]
\label{lem_Zhang_est}
Let $b\in H^{1/2}(\R^n)$.  Then $b_\tau=b*\Psi_\tau\in (C^\infty\cap H^{1/2})(\R^n)$,
\begin{equation}
\label{eq_2_10_7}
\|b-b_\tau\|_{L^2(\R^n)}=o(\tau^{1/2}), \quad \tau\to 0,
\end{equation} 
and
\begin{equation}
\label{eq_2_10_8}
\|b_\tau\|_{L^2(\R^n)}=\mathcal{O}(1), \quad  \|\nabla b_\tau\|_{L^2(\R^n)}=o(\tau^{-1/2}), \quad \tau\to 0.
\end{equation}
\end{lem}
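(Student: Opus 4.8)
The plan is to argue entirely on the Fourier transform side. Fix the normalization of the Fourier transform for which $\widehat{f\ast g}=\widehat f\,\widehat g$ and Plancherel's identity $\|f\|_{L^2(\R^n)}=c_n\|\widehat f\|_{L^2(\R^n)}$ holds with a dimensional constant $c_n>0$; then $\widehat{\Psi_\tau}(\xi)=\widehat\Psi(\tau\xi)$ and $\widehat{b_\tau}(\xi)=\widehat b(\xi)\,\widehat\Psi(\tau\xi)$, where $\widehat\Psi\in\mathcal{S}(\R^n)$ and $\widehat\Psi(0)=\int_{\R^n}\Psi\,dx=1$. Since $\Psi_\tau\in C_0^\infty(\R^n)$ and $b\in H^{1/2}(\R^n)\subset L^1_{\mathrm{loc}}(\R^n)$, the convolution $b_\tau$ is $C^\infty$; and $\|b_\tau\|_{H^{1/2}(\R^n)}\le\|\widehat\Psi\|_{L^\infty(\R^n)}\|b\|_{H^{1/2}(\R^n)}<\infty$, so $b_\tau\in(C^\infty\cap H^{1/2})(\R^n)$. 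The bound $\|b_\tau\|_{L^2(\R^n)}=\mathcal{O}(1)$ in \eqref{eq_2_10_8} is immediate from Young's inequality, as $\|\Psi_\tau\|_{L^1(\R^n)}=1$ and $b\in L^2(\R^n)$.

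For \eqref{eq_2_10_7} I would first record the elementary estimate $|1-\widehat\Psi(\eta)|\le C\min\{1,|\eta|\}$, $\eta\in\R^n$, which for $|\eta|\le1$ is the mean value inequality applied to the smooth function $\widehat\Psi$ together with $\widehat\Psi(0)=1$, and for $|\eta|>1$ follows from $\|\widehat\Psi\|_{L^\infty(\R^n)}<\infty$. Since $\min\{1,t^2\}\le t$ for $t\ge0$, this yields $|1-\widehat\Psi(\tau\xi)|^2\le C\tau|\xi|\le C\tau(1+|\xi|^2)^{1/2}$. By Plancherel,
\[
\tau^{-1}\|b-b_\tau\|_{L^2(\R^n)}^2=c_n^2\int_{\R^n}|\widehat b(\xi)|^2\,\tau^{-1}|1-\widehat\Psi(\tau\xi)|^2\,d\xi ,
\]
and here the integrand is bounded above by $C(1+|\xi|^2)^{1/2}|\widehat b(\xi)|^2$, which lies in $L^1(\R^n)$ because $b\in H^{1/2}(\R^n)$; moreover, for every fixed $\xi$ the bound $|1-\widehat\Psi(\tau\xi)|^2\le C\tau^2|\xi|^2$ shows that the integrand tends to $0$ as $\tau\to0$. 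Dominated convergence then gives $\tau^{-1}\|b-b_\tau\|_{L^2(\R^n)}^2\to0$, i.e.\ \eqref{eq_2_10_7}.

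The gradient estimate in \eqref{eq_2_10_8} is handled in exactly the same spirit. Since $\widehat{\partial_j b_\tau}(\xi)=i\xi_j\widehat b(\xi)\widehat\Psi(\tau\xi)$, Plancherel gives
\[
\tau\,\|\nabla b_\tau\|_{L^2(\R^n)}^2=c_n^2\int_{\R^n}|\widehat b(\xi)|^2\big(\tau|\xi|^2|\widehat\Psi(\tau\xi)|^2\big)\,d\xi .
\]
Writing $\tau|\xi|^2|\widehat\Psi(\tau\xi)|^2=|\xi|\cdot\big(|\tau\xi|\,|\widehat\Psi(\tau\xi)|^2\big)$ and using that $\widehat\Psi$ is Schwartz, so that $M:=\sup_{\eta\in\R^n}|\eta|\,|\widehat\Psi(\eta)|^2<\infty$, the integrand is dominated by $M(1+|\xi|^2)^{1/2}|\widehat b(\xi)|^2\in L^1(\R^n)$, while for each fixed $\xi$ it tends to $0$ as $\tau\to0$ because $|\widehat\Psi(\tau\xi)|^2\to|\widehat\Psi(0)|^2=1$ and $\tau|\xi|^2\to0$. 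Dominated convergence yields $\tau\,\|\nabla b_\tau\|_{L^2(\R^n)}^2\to0$, which is the asserted rate. No genuine obstacle arises; the only point demanding a little care is the passage from the crude $\mathcal{O}$-bounds to the stated $o$-rates, which is precisely what each dominated convergence step supplies, and which uses in an essential way that $b$ carries the extra half-derivative and that $\widehat\Psi$ decays rapidly.
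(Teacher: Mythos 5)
Your proof is correct and follows essentially the same route as the paper: pass to the Fourier side via Plancherel, dominate $\tau^{-1}|1-\widehat\Psi(\tau\xi)|^2$ and $\tau|\xi|^2|\widehat\Psi(\tau\xi)|^2$ by a constant multiple of $|\xi|$ (integrable against $|\widehat b|^2$ since $b\in H^{1/2}$), note the pointwise vanishing as $\tau\to 0$, and conclude by dominated convergence. Your explicit bound $|1-\widehat\Psi(\eta)|\le C\min\{1,|\eta|\}$ is just a repackaging of the paper's observation that $|1-\widehat\Psi(\eta)|^2/|\eta|$ is bounded, continuous, and vanishes at the origin.
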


\begin{proof}
We have
\begin{equation}
\label{eq_2_10_9}
\int_{\R^n} (1+|\xi|^2)^{1/2}|\hat b(\xi)|^2d\xi <\infty.
\end{equation}
Using that $\hat \Psi_\tau(\xi)=\hat \Psi(\tau \xi)$, we get $\hat b_\tau(\xi)=\hat b(\xi)\hat \Psi(\tau\xi)$, and therefore, 
\begin{equation}
\label{eq_2_10_10}
\begin{aligned}
\frac{1}{\tau}\|b-b_\tau\|^2_{L^2(\R^n)}=\frac{(2\pi)^{-n}}{\tau}\int_{\R^n}|1-\hat \Psi(\tau\xi)|^2 |\hat b(\xi)|^2d\xi=\int_{\R^n} g(\tau\xi) |\xi||\hat b(\xi)|^2d\xi, 
\end{aligned}
\end{equation}
where 
\[
g(\eta):=(2\pi)^{-n}\frac{|1-\hat \Psi(\eta)|^2}{|\eta|}.
\]
As $\hat \Psi(0)=1$, we have $g(0)=0$, and furthermore, since $\Psi\in C^\infty_0(\R^n)$, we conclude that $g$ is continuous and bounded.  By Lebesgue's dominated convergence theorem, applied to  \eqref{eq_2_10_10}, in view of \eqref{eq_2_10_9}, we get 
that  $\frac{1}{\tau}\|b-b_\tau\|^2_{L^2(\R^n)}\to 0$ as $\tau\to 0$, proving \eqref{eq_2_10_7}. 

The first part of \eqref{eq_2_10_8} is clear and to see the second part, we write 
\begin{align*}
\tau \|\p_{x_j} b_\tau\|^2_{L^2(\R^n)}=(2\pi)^{-n}\tau \int_{\R^n}|\xi_j|^2|\hat \Psi(\tau\xi)|^2|\hat b(\xi)|^2dx\le \int_{\R^n} \tilde g(\tau\xi)|\xi| |\hat b(\xi)|^2d\xi,
\end{align*}
where 
\[
\tilde g(\eta):=(2\pi)^{-n}|\eta||\hat \Psi(\eta)|^2
\]
is continuous and bounded and $\tilde g(0)=0$. By Lebesgue's dominated convergence theorem, we conclude that $\tau \|\p_{x_j} b_\tau\|^2_{L^2(\R^n)}=o(1)$, as $\tau\to 0$. The proof is complete. 
\end{proof}

\begin{lem}
\label{lem_app_delta}
Assume that $w\in C^1_0(\R^n)$ or $w\in H^{\frac{3}{2}+\delta}(\R^n)$, $\delta>0$ fixed.  Let  $w_h=w*\Psi_h$, where $\Psi_h$ is defined using a radial function $\Psi$. Let $\Omega\subset\R^n$, $n\ge 2$, be a bounded open set with $C^2$ boundary. 
Then 
\begin{equation}
\label{eq_cond_delta_1}
\|w_h-w\|_{L^2(\p \Omega)}=o(h),
\end{equation}
\begin{equation}
\label{eq_cond_delta_2}
\|\nabla w_h-\nabla w\|_{L^2(\p \Omega)}=o(1),
\end{equation}
as $h\to 0$.
\end{lem}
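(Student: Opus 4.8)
The plan is to prove the two estimates separately, and within each to handle the two regularity classes for $w$ in turn. The bound \eqref{eq_cond_delta_2} is the soft one. If $w\in C^1_0(\R^n)$, then $\nabla w$ is continuous with compact support, hence uniformly continuous, so $\nabla w_h=(\nabla w)*\Psi_h\to\nabla w$ uniformly on $\R^n$ as $h\to 0$; since $\p\Omega$ is compact this gives $\|\nabla w_h-\nabla w\|_{L^2(\p\Omega)}\to 0$. If instead $w\in H^{\frac{3}{2}+\delta}(\R^n)$, then $\nabla w\in H^{\frac{1}{2}+\delta}(\R^n)$, and since $\widehat{(\nabla w)*\Psi_h}(\xi)=\widehat{\nabla w}(\xi)\,\hat\Psi(h\xi)$ with $\hat\Psi(h\xi)\to\hat\Psi(0)=1$ pointwise and $|\hat\Psi(h\xi)|\le\|\Psi\|_{L^1}=1$, dominated convergence (with majorant $4(1+|\xi|^2)^{\frac{1}{2}+\delta}|\widehat{\nabla w}(\xi)|^2\in L^1$) gives $(\nabla w)*\Psi_h\to\nabla w$ in $H^{\frac{1}{2}+\delta}(\R^n)$; composing with the bounded trace map $H^{\frac{1}{2}+\delta}(\R^n)\to H^{\delta}(\p\Omega)\hookrightarrow L^2(\p\Omega)$, available because $\p\Omega$ is $C^2$ and $\frac{1}{2}+\delta>\frac{1}{2}$, yields \eqref{eq_cond_delta_2}.

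For \eqref{eq_cond_delta_1} the point is to get the rate $o(h)$ rather than merely $O(h)$, and this is where the radiality of $\Psi$ is used. When $w\in C^1_0(\R^n)$ I would write, using $\int_{\R^n}\Psi(z)\,z\,dz=0$ (valid since $\Psi$ is radial, hence even),
\[
w_h(x)-w(x)=-h\int_{\R^n}\Psi(z)\int_0^1\bigl(\nabla w(x-thz)-\nabla w(x)\bigr)\cdot z\,dt\,dz .
\]
If $\supp\Psi\subset B(0,R)$ and $\omega(\cdot)$ is the (finite) modulus of continuity of $\nabla w$, then $|\nabla w(x-thz)-\nabla w(x)|\le\omega(hR)$ for $z\in\supp\Psi$ and $t\in[0,1]$, so $|w_h(x)-w(x)|\le h\,\omega(hR)\int_{\R^n}\Psi(z)|z|\,dz=o(h)$ uniformly in $x$, whence $\|w_h-w\|_{L^2(\p\Omega)}\le|\p\Omega|^{1/2}\|w_h-w\|_{L^\infty(\R^n)}=o(h)$.

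When $w\in H^{\frac{3}{2}+\delta}(\R^n)$ I would pass to the Fourier side. Fixing $\e=\delta/2$, Plancherel gives
\[
\frac{1}{h^{2}}\,\|w_h-w\|^2_{H^{\frac{1}{2}+\e}(\R^n)}=\int_{\R^n}(1+|\xi|^2)^{\frac{1}{2}+\e}\,|\xi|^2\,G(h\xi)\,|\hat w(\xi)|^2\,d\xi,\qquad G(\eta):=\frac{|1-\hat\Psi(\eta)|^2}{|\eta|^2}.
\]
Since $\Psi\in C^\infty_0(\R^n)$ is radial, $\hat\Psi$ is smooth with $\hat\Psi(0)=1$ and $\nabla\hat\Psi(0)=0$, so $1-\hat\Psi(\eta)=O(|\eta|^2)$ near the origin; combined with $|1-\hat\Psi|\le 2$ this shows that $G$ is bounded on $\R^n$ and $G(\eta)\to 0$ as $\eta\to 0$, hence $G(h\xi)\to 0$ pointwise as $h\to 0$. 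The majorant $\|G\|_{L^\infty}(1+|\xi|^2)^{\frac{3}{2}+\e}|\hat w(\xi)|^2$ lies in $L^1$ because $w\in H^{\frac{3}{2}+\delta}(\R^n)$ and $\e<\delta$, so dominated convergence gives $\|w_h-w\|_{H^{\frac{1}{2}+\e}(\R^n)}=o(h)$; applying the bounded trace map $H^{\frac{1}{2}+\e}(\R^n)\to H^{\e}(\p\Omega)\hookrightarrow L^2(\p\Omega)$ gives \eqref{eq_cond_delta_1}.

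The only genuinely non-routine point, and the one I regard as the main obstacle, is precisely this upgrade from $O(h)$ to $o(h)$ in \eqref{eq_cond_delta_1}: one must exploit that the mollifier is even (radial), so that the first-order term in the expansion of $1-\hat\Psi$ at the origin vanishes, and then run a dominated-convergence argument to turn the resulting uniform bound into genuine decay. The remaining ingredients — the trace inequality on $C^2$ domains and the convergence of mollifications in Sobolev and sup norms — are entirely standard.
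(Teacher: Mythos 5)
Your proposal is correct and follows essentially the same route as the paper: for the $C^1_0$ case the fundamental theorem of calculus plus the vanishing first moment of the radial mollifier and uniform continuity of $\nabla w$ give the uniform $o(h)$ bound, and for the $H^{\frac{3}{2}+\delta}$ case one passes to the Fourier side, uses $\hat\Psi(0)=1$ and $\nabla\hat\Psi(0)=0$ to control $|1-\hat\Psi(\eta)|^2/|\eta|^2$, applies dominated convergence, and concludes via the trace theorem. Your choice of working in $H^{\frac12+\delta/2}$ rather than $H^{\frac12+\delta}$ is an inessential (slightly wasteful but harmless) variation.
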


\begin{proof}

Let first $w\in C^1_0(\R^n)$.  To prove \eqref{eq_cond_delta_1} in this case, we shall show that 
\begin{equation}
\label{app_-1}
\|w_h-w\|_{L^\infty(\R^n)}=o(h), \quad h\to 0. 
\end{equation}
By the fundamental theorem of calculus, we get
\begin{equation}
\label{app_0}
\begin{aligned}
h^{-1}(w_h(x)-w(x))&=h^{-1}\int_{\R^n} (w(x-hy)-w(x))\Psi(y)dy\\
&=h^{-1} \int_{\R^n}\bigg(\int_0^1\frac{d}{dt} w(x-thy)dt\bigg) \Psi(y)dy\\
&= h^{-1}\int_{\R^n}\int_0^1 \nabla w(x-thy) \cdot (-hy)\Psi(y)dtdy.
\end{aligned}
\end{equation}
Using that $\Psi$ is even, we have
\begin{equation}
\label{app_1}
\int_{\R^n} (X\cdot y)\Psi(y)dy=0, \quad X\in \R^n.
\end{equation}
It follows from \eqref{app_0} and \eqref{app_1} that uniformly in $x$,
\[
h^{-1}(w_h(x)-w(x))=-\int_{\R^n}\int_0^1 (\nabla w(x-thy)-\nabla w(x)) \cdot ydt \Psi(y)dy=o(1),
\]
as $h\to 0$, which shows \eqref{app_-1}.   Here we have used that $\nabla w$ is uniformly continuous. 

In the case of $w\in C^1_0(\R^n)$, \eqref{eq_cond_delta_2} follows from the uniform continuity of $\nabla w$. 

Let now $w\in H^{\frac{3}{2}+\delta}(\R^n)$, $\delta>0$, and let us show \eqref{eq_cond_delta_1}. First by the trace theorem, we have 
\begin{equation}
\label{app_1_2}
\|w_h-w\|_{L^2(\p \Omega)}\le C\|w_h-w\|_{H^{\frac{1}{2}+\delta}(\R^n)}.
\end{equation}
We write
\begin{equation}
\label{app_2}
\begin{aligned}
h^{-2} \|w_h-w\|^2_{H^{\frac{1}{2}+\delta}(\R^n)}&=(2\pi)^{-n}h^{-2}\int_{\R^n} (1+|\xi|^2)^{\frac{1}{2}+\delta}|1-\hat\Psi(h\xi)|^2|\hat w(\xi)|^2d\xi\\
&\le \int_{\R^n}g(h\xi)(1+|\xi|^2)^{\frac{3}{2}+\delta} |\hat w(\xi)|^2d\xi,
\end{aligned}
\end{equation}
where 
\[
g(\eta)=(2\pi)^{-n}\frac{|1-\hat \Psi(\eta)|^2}{|\eta|^2}.
\]
Since $\Psi$ is radial, have $\nabla \hat \Psi(0)=-i\int_{\R^n}x\Psi (x)dx=0$.  Using this together with the fact that $\hat \Psi(0)=1$, and that $\hat \Psi\in \mathcal{S}(\R^n)$, we conclude that $g$ is continuous and bounded with $g(0)=0$. Hence, Lebesgue's dominated convergence theorem, applied to \eqref{app_2}, gives that $ \|w_h-w\|_{H^{\frac{1}{2}+\delta}(\R^n)}=o(h)$ as $h\to 0$, and therefore, in view of \eqref{app_1_2}, we see \eqref{eq_cond_delta_1}.

Similarly, using that $w\in H^{\frac{3}{2}+\delta}(\R^n)$ and  Lebesgue's dominated convergence theorem,  we get 
\[
\|\nabla w_h-\nabla w\|^2_{H^{\frac{1}{2}+\delta}(\R^n)}\le (2\pi)^{-n}\int_{\R^n} (1+|\xi|^2)^{\frac{3}{2}+\delta}|1-\hat \Psi(h\xi)|^2 |\hat w(\xi)|^2d\xi=o(1),
\]
as $h\to 0$. Hence, by the trace theorem, we get \eqref{eq_cond_delta_2}. The proof is complete. 
\end{proof}

We shall also need the following obvious estimates.
\begin{lem}
Let $w\in (W^{1,\infty}\mathcal \cap \mathcal{E}')(\R^n)$. Then 
\begin{equation}
\label{eq_cond_Lipschitz}
\sup_{\R^n} | w_h- w|\le \mathcal{O}(h),
\end{equation}
\begin{equation}
\label{eq_cond_Lipschitz_nabla}
\sup_{\R^n}| w_h|\le \mathcal{O}(1),\quad \sup_{\R^n}| \nabla w_h|\le \mathcal{O}(1),
\end{equation}
as $h\to 0$.
\end{lem}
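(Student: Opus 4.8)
The plan is to read off all three estimates from elementary properties of convolution with an approximate identity, using only that $w$ is globally Lipschitz (so that $\nabla w\in L^\infty(\R^n)$ exists a.e.) and compactly supported, and that $\Psi\in C^\infty_0(\R^n)$ with $0\le\Psi\le1$, $\int\Psi\,dx=1$.

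First I would dispatch \eqref{eq_cond_Lipschitz_nabla}. Since $\|\Psi_h\|_{L^1(\R^n)}=1$ for every $h>0$, Young's inequality gives $\sup_{\R^n}|w_h|\le\|w\|_{L^\infty(\R^n)}\|\Psi_h\|_{L^1(\R^n)}=\|w\|_{L^\infty(\R^n)}=\mathcal{O}(1)$. For the gradient, I would note that because $w\in W^{1,\infty}(\R^n)$ one may move the derivative onto $w$ in the convolution, i.e. $\nabla w_h=w*\nabla\Psi_h=(\nabla w)*\Psi_h$, and then again by Young, $\sup_{\R^n}|\nabla w_h|\le\|\nabla w\|_{L^\infty(\R^n)}\|\Psi_h\|_{L^1(\R^n)}=\|\nabla w\|_{L^\infty(\R^n)}=\mathcal{O}(1)$.

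Next I would prove \eqref{eq_cond_Lipschitz}. Using $\int_{\R^n}\Psi\,dy=1$ and the change of variables $y\mapsto x-hy$, write
\[
w_h(x)-w(x)=\int_{\R^n}\bigl(w(x-hy)-w(x)\bigr)\Psi(y)\,dy.
\]
Since $w$ is Lipschitz with constant $\|\nabla w\|_{L^\infty(\R^n)}$, the integrand is bounded in absolute value by $\|\nabla w\|_{L^\infty(\R^n)}\,h\,|y|\,\Psi(y)$, uniformly in $x$. As $\Psi$ has compact support, $M:=\int_{\R^n}|y|\,\Psi(y)\,dy<\infty$, and hence $\sup_{\R^n}|w_h-w|\le M\|\nabla w\|_{L^\infty(\R^n)}\,h=\mathcal{O}(h)$, as $h\to0$.

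There is essentially no obstacle here; the only two points worth a sentence are that differentiation commutes with convolution against $\Psi_h$ when $w\in W^{1,\infty}$ (so that $\nabla w$ may be pulled out in $L^\infty$), and that the compact support of $\Psi$ guarantees the finiteness of the first moment $\int|y|\Psi(y)\,dy$ used in the Lipschitz bound.
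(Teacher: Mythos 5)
Your proof is correct. The paper itself offers no proof of this lemma (it introduces the statement as ``the following obvious estimates''), and your argument --- Young's inequality for the $\mathcal{O}(1)$ bounds, commuting the gradient with the convolution, and the first-order Taylor/Lipschitz bound $|w(x-hy)-w(x)|\le \|\nabla w\|_{L^\infty}h|y|$ integrated against $\Psi$ for the $\mathcal{O}(h)$ bound --- is exactly the standard justification the authors have in mind. One can note in passing that the compact-support hypothesis $w\in\mathcal{E}'(\R^n)$ is not actually needed for these three estimates; only $w\in W^{1,\infty}(\R^n)$ is used, as your proof makes clear.
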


\end{appendix}

\section*{Acknowledgements}
K.K  is grateful to Russell Brown for some very helpful discussions. We would like to thank Mikko Salo for bringing the work  \cite{Rodriguez} to our attention. 
The research of K.K. is partially supported by the National Science Foundation (DMS 1500703). The research of G.U. is partially supported by the National Science Foundation, Simons Fellowship, and the Academy of Finland.

\end{document}